\documentclass[12pt]{amsart}
\usepackage{fullpage}
\usepackage{mathtools}
\usepackage{amssymb,amsmath,amsthm,amscd,mathrsfs,graphicx}
\usepackage[dvipsnames]{xcolor}
\usepackage{bm}
\usepackage{dsfont}
\usepackage{enumerate}
\usepackage{epsfig}
\usepackage{float, graphicx}
\usepackage{latexsym, amsxtra}
\usepackage{pdfpages}
\usepackage{multicol}
\usepackage[normalem]{ulem}
\usepackage{psfrag}
\usepackage{stmaryrd}
\usepackage{tikz}
\usetikzlibrary{positioning, shapes.geometric}
\usepackage[T1]{fontenc}
\usepackage{url}
\usepackage{verbatim}
\usepackage{xy}
\usepackage{indentfirst}
\usepackage{tikz-cd}
\tikzset{%
    symbol/.style={%
        ,draw=none
        ,every to/.append style={%
            edge node={node [sloped, allow upside down, auto=false]{$#1$}}}
    }
}
\usepackage{url}
\usepackage{longtable}
\usepackage{multirow}
\usepackage{bbding}
\usepackage{booktabs}

\usepackage[
  colorlinks=true,
  linkcolor=red,
  urlcolor=blue,
  citecolor=blue
]{hyperref}

\usepackage[title]{appendix}

\usepackage{algorithm}
\usepackage{algpseudocode}

\makeatletter
\def\thm@space@setup{%
  \thm@preskip=2ex \thm@postskip=2ex
}
\makeatother

\makeatletter

\newcommand{\Rmnum}[1]{\expandafter\@slowromancap\romannumeral #1@}
\makeatother

\newcommand{\Id}{{\mathrm{Id}}}
\numberwithin{equation}{section}
\theoremstyle{plain}

\newtheorem{thm}{Theorem~}[section] 
\newtheorem{lem}[thm]{Lemma~}

\newtheorem{fact}[thm]{Fact}
\newtheorem{prop}[thm]{Proposition~}

\newtheorem{cor}[thm]{Corollary~}
\newtheorem{claim}[thm]{Claim~}
\newtheorem{prop-def}[thm]{Proposition-Definition~}

\theoremstyle{remark}
\newtheorem{rmk}[thm]{Remark~}

\theoremstyle{definition}
\newtheorem{defn}[thm]{Definition~}

\newcommand{\CC}{\mathbb{C}}
\newcommand{\ZZ}{\mathbb{Z}}

\newcommand{\LL}{\mathbb{L}}
\newcommand{\PP}{\mathbb{P}}
\newcommand{\FF}{\mathbb{F}}
\newcommand{\QQ}{\mathbb{Q}}

\newcommand{\DD}{\mathbb{D}}

\newcommand\PGL{\mathrm{PGL}}

\newcommand\PSL{\mathrm{PSL}}

\newcommand\Hom{\mathrm{Hom}}
\newcommand\id{\mathrm{id}}

\newcommand\rank{\mathrm{rank}}

\newcommand\SL{\mathrm{SL}}

\newcommand\diag{\mathrm{diag}}
\newcommand\GL{\mathrm{GL}}
\newcommand\Sym{\mathrm{Sym}}

\newcommand\Span{\mathrm{Span}}

\newcommand\ord{\mathrm{ord}}
\newcommand\QD{\mathrm{QD}}

\newcommand{\Aut}{\mathrm{Aut}}

\newcommand{\bs}{\backslash}
\newcommand{\dbs}{\bs\hspace{-0.5mm}\bs}

\title{Non-symplectic Indices of Automorphism Groups of Smooth Cubic Fourfolds}

\vspace{1.2cm}
\author{Jie Fu, Shihao Wang, Zhiwei Zheng}
\date{}

\newcommand{\Addresses}{{
		\bigskip
		\footnotesize

        J.~Fu, \textsc{Tsinghua University, Beijing, China}\par\nopagebreak
		\textit{Email address}: \texttt{fu-j21@mails.tsinghua.edu.cn}

		\medskip

        S.~Wang, \textsc{Tsinghua University, Beijing, China}\par\nopagebreak
		\textit{Email address}: \texttt{wangshihao25@mails.tsinghua.edu.cn}

		\medskip
		
		Z.~Zheng, \textsc{Tsinghua University, Beijing, China}\par\nopagebreak
		\textit{Email address}: \texttt{zhengzhiwei@mail.tsinghua.edu.cn}
}}

\begin{document}
\bibliographystyle{amsalpha}

\begin{abstract}
 We study the full automorphism groups of smooth cubic fourfolds with
    prescribed symplectic automorphism group. Our starting point is the
    classification of symplectic automorphism groups by Laza and Zheng. We
    focus on the non-symplectic index, namely, the index of the symplectic
    automorphism group in the full automorphism group. We prove general
    restrictions on this index. We also compute bounds by group-theoretic
    and lattice-theoretic methods. In several cases, we determine all
    possible indices. For coinvariant lattices of rank 19, we classify all
    possible pairs consisting of the symplectic automorphism group and the
    full automorphism group.
\end{abstract}

\maketitle

 \setcounter{tocdepth}{1}
	\tableofcontents

\emph{Notation}:
\begin{enumerate}
\item For two groups $G_1, G_2$, we write $G_1<G_2$ if $G_1$ is a subgroup of $G_2$.
\item For groups $G_{1}<G_{2}$, we denote by $C_{G_{2}}(G_{1})$ and $N_{G_{2}}(G_{1})$ the centralizer and normalizer of $G_{1}$ in $G_{2}$.
\item For groups $G_{1}$ and $G_{2}$, $G_1:G_2$ is a semidirect product of $G_1$ and $G_2$; $G_{1}.G_{2}$ is an extension group given by some exact sequence
\begin{align*}
    1\longrightarrow G_{1} \longrightarrow G_{1}.G_{2}\longrightarrow G_{2}\longrightarrow 1.
\end{align*}
\item We use $L_2(q)$ to denote $\PSL_2(\FF_q)$. 
\item We denote by $\mu_{n}$ the group of $n$-th root of unity in $\CC^{\times}$. Sometimes we simply use $n$ to denote the cyclic group of order $n$.
\item We use $3^{1+4}$ to denote $3^{1+4}_+$, the extraspecial $3$-group of order $243$ and exponent $1$.
\item For $\frac{1}{n}(m_{1},\dots,m_{k})$, we mean the diagonal matrix $\diag(\zeta_{n}^{m_1},\dots,\zeta_{n}^{m_{k}})\in \GL(k,\CC)$, where $\zeta_{n}=\text{exp}\left(\frac{2\pi \sqrt{-1}}{n}\right)$ and $m_{1},\dots,m_{k},n\in \mathbb{Z}$. Sometimes we also refer to the image in $\PGL(k,\CC)$.
\item We denote by $\omega$ the primitive 3-th root of unity $e^{\frac{2\pi \sqrt{-1}}{3}}$.
\item For $g\in \GL(n,\CC)$, denote by $\overline{g}$ the image of $g$ in $\PGL(n,\CC)$ under the natural projection $\GL(n,\CC)\rightarrow \PGL(n,\CC)$. 
\item For permutation $\sigma \in S_n$, we also use $\sigma$ to denote the matrix $A_\sigma \in \GL(n,\CC)$ such that $(e_{\sigma(1)},..,e_{\sigma(n)}) = (e_1,..,e_n)A_\sigma$. For example, we use $(123) \in S_3$ to denote $\left(\begin{smallmatrix}
    0&0&1\\
    1&0&0\\
    0&1&0
\end{smallmatrix}\right).$
\item We denote by $a^b c$ the rank $2$ quadratic form
$\left(
\begin{smallmatrix}
a & b\\
b & c
\end{smallmatrix}
\right)$.
We write $-(a^b c)\colon=(-a)^{(-b)}(-c)$.
\item Let $A\in \GL(n,\CC)$ and $F(x_1,\cdots,x_n)$ be the defining equation of a hypersurface $X$ in $\PP^{n-1}$. Then $A\cdot F(x_1,\cdots,x_n)\colon =F((x_1,\cdots,x_n)\cdot A)$ is the defining equation of $A\cdot X$.
\end{enumerate}

\section{Introduction}
For a smooth cubic fourfold $X$, a fundamental aspect of understanding its geometric properties is to study $\Aut(X)$, its group of automorphisms, and $\Aut^s(X)$, its group of symplectic automorphisms. Based on the global Torelli theorem and lattice theory, \cite{laza2022automorphisms} provides a complete classification of possible symplectic automorphism groups $G$ of smooth cubic fourfolds $X$, revealing relations with certain coinvariant lattices $S=S_G(X)$. On the other hand, \cite{yang2024automorphism} classifies all groups that admit faithful actions on smooth cubic fourfolds.

A natural question arises as, for a given symplectic automorphism group $G$, what could the full automorphism group be? This is partly measured by the integer $[\Aut(X) : \Aut^s(X)]$, which we call the ``non-symplectic index'' for $X$. Determining possible indices $m$ for a given $G$ comes the central problem addressed in this paper.

This work aims to systematically investigate the non-symplectic indices of cubic fourfolds. We employ a combination of techniques, including the lattice-theoretic established by \cite{Nikulin_1980}, computational group theory via GAP \cite{GAP4} using classification results established in \cite{laza2022automorphisms} and \cite{yang2024automorphism}, geometric-invariant calculations using expressions given in \cite{KOIKE202512} (see also \cite{KOIKE2026} for Corrigendum) and methods introduced in \cite{he2025cubicfourfoldsorder7automorphism}, and arguments on moduli spaces mainly depend on \cite{yu2020moduli}, to identify and constrain the possible non-symplectic indices. The cases $\rank(S)=20$, which correspond to maximal symplectic automorphism groups and single points in the moduli, are already solved in \cite{laza2022automorphisms}.

As the main result of this paper, we give several restrictions of possible indices for a given isomorphism class of $\Aut^s(X)$. That is,

\begin{thm}\label{thm:main}
    For some groups $G$, we find all possible non-symplectic indices $[\Aut(X):\Aut^s(X)]$ for smooth cubic fourfolds $X$ with $\Aut^s(X)\cong G$, as listed in Table \ref{table: main}. Precisely, the nonempty items in the column ``all possible indices'' consists of all possible non-symplectic indices.
\end{thm}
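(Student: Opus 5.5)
The theorem is a case-by-case classification, so for each group $G$ in Table~\ref{table: main} we would prove two inclusions: every realized index $m=[\Aut(X):\Aut^s(X)]$ lies in the listed set, and every listed value occurs for some smooth cubic fourfold $X$ with $\Aut^s(X)\cong G$.

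\textbf{General constraints.} The starting point is the exact sequence
\[
1\longrightarrow \Aut^s(X)\longrightarrow \Aut(X)\longrightarrow \mu_m\longrightarrow 1,
\]
coming from the action of $\Aut(X)$ on $H^{3,1}(X)$.
\begin{itemize}
\item The quotient $\Aut(X)/G$ is cyclic of order $m$.
\item A generator acts on the transcendental lattice $T\subset S_G(X)^\perp$ with primitive $m$-th roots of unity as eigenvalues. Hence $\varphi(m)$ divides $\rank T$.
\item By Nikulin's theory of discriminant forms, $m$ must also be compatible with the discriminant group of $S_G(X)$. The element acts on $A_{S}\cong A_{S^\perp}$ through $O(A_S)$, and on $A_T$ it acts by scalars compatible with $\mu_m$.
\item Since $\Aut(X)$ normalizes $G$, $\Aut(X)$ embeds in $N_{\PGL(6,\CC)}(G)$. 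So $\Aut(X)$ must be a cyclic-by-$G$ extension appearing in the list of groups acting faithfully on cubic fourfolds from \cite{yang2024automorphism}.
\end{itemize}
We would run the GAP enumeration of such extensions $G.\mu_m$ against that list. For $G$ trivial or small cyclic, where the list is too coarse, we would instead use the classification of possible non-symplectic automorphism orders, i.e.\ the eigenvalue data on $H^4$ and the explicit normal forms.

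\textbf{Realization.} For each candidate $m$ we would write down an explicit $G$-invariant cubic. Concretely, we decompose $\CC^6$ into $G$-eigenspaces and choose $g=\frac1{m'}(\ldots)$ that commutes with $G$ up to scalars and acts on the family of $G$-invariant cubics. Then we:
\begin{enumerate}
\item pick a smooth member of the $g$-invariant subfamily, checking smoothness by a Jacobian computation;
\item verify that $g$ is non-symplectic of order exactly $m$ modulo $G$;
\item verify that $\Aut^s(X)$ is exactly $G$ and not larger.
\end{enumerate}
For step (iii), a dimension count is available because the $G$-locus in the moduli space has dimension $20-\rank S_G$. A generic member of the $g$-invariant sublocus, of dimension the $\zeta_m$-eigenspace dimension minus one in the period ball, does not lie in the locus of any strictly larger symplectic group, since those loci have smaller dimension by \cite{laza2022automorphisms}. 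Using \cite{yu2020moduli}, we also check that the index is not accidentally larger generically.

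\textbf{Excluding survivors.} Some values of $m$ will pass the lattice and group tests, and we expect ruling these out to be the main obstacle. Our approach is to use the $G$-invariant normal forms from \cite{KOIKE202512} and show that any $X$ admitting the extra automorphism is forced into a smaller locus. There, either $X$ is singular, or $\Aut^s(X)$ strictly contains $G$, for instance by landing on a rank-$20$ point already treated in \cite{laza2022automorphisms}. For prime-order elements we would first try the fixed-locus and eigenvalue arguments of \cite{he2025cubicfourfoldsorder7automorphism}. For composite $m$, we reduce to powers: if $m$ is admissible then every divisor-compatible power must be admissible with the right centralizer, and we exclude $m$ by finding a power that forces extra symplectic symmetry.
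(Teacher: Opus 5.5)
Your upper bound matches the paper's. The GAP search for $G\lhd H$ inside the maximal groups of \cite{yang2024automorphism} is the YYZ bound; the paper also filters it by Proposition~\ref{prop:2k3l}, since prime-order non-symplectic automorphisms have order $2$ or $3$ and $9\nmid m$.

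\textbf{Gap 1: no lower bound on the index.} Every test you list is passed by $m=1$, and your exclusion step only treats automorphisms beyond $G$. Yet $1$ and $3$ are absent from several rows:
\begin{itemize}
\item $3^{1+4}:2$, with indices $2,4,6,12$ among the divisors of $12$;
\item the second $S_3$ family;
\item $3^{1+4}:2.2$ and $N_{72}$;
\item the $A_6$ and $S_5$ families of generic index $2$.
\end{itemize}
The paper excludes them through the generic index (Proposition-Definitions~\ref{prop-def:generic1}, \ref{prop-def:generic2} and Corollary~\ref{cor:gen12}). When $-\id_T$ extends to an isometry of $\Lambda$ fixing $h^2$, which in these rows happens exactly when $q_S\cong-(3^{+1}\oplus q_T)$, a non-symplectic involution acts on \emph{every} member of the family. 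Concretely it is often the transposition $(12)$. Hence every index occurring there is even. You need this divisibility statement, or an equivalent, to complete the classification.

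\textbf{Gap 2: the existence step is not sound as written.} In (iii) you compare the $g$-invariant sublocus with the loci of larger symplectic groups. What matters is whether the sublocus is \emph{contained} in them. For the top indices the sublocus is a single point: index $12$ for $3^{1+4}:2$ and index $24$ for $S_3$, where $\varphi(m)=\rank T$, and also $32$ and $48$ for $G=1$. At such points only a direct computation of $\Aut^s$ decides.

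The paper does not build an example for each $m$. It realizes the top index at an explicit cubic, then propagates downward by Lemma~\ref{lem:dim_reduction} and Proposition~\ref{prop:ind_red}:
\begin{itemize}
\item The $\chi'$-eigenspace of $\widetilde{G}'=\chi^{-1}(\mu_{m/p})$ contains two Galois-conjugate copies of the $\chi$-eigenspace, so that family is strictly larger.
\item The family passes through $X$, where $\Aut^s(X)=G$.
\item By semicontinuity its generic member therefore has $\Aut^s=G$ and index exactly $m/p$.
\end{itemize}

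Your remark that admissibility of $m$ passes to its divisors is false without the paper's hypotheses: $m/p\notin\{1,3\}$, and $\dim(\Lambda_{0,\CC})^\chi>1$ when $m\in\{4,6\}$. For $3^{1+4}:2$, the indices $6$ and $12$ occur but $3$ does not.

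The values the reduction cannot reach need separate inputs that your plan does not supply:
\begin{itemize}
\item index $3$ for $G=1$, via the cubic $L_3(x_0,\dots,x_4)+x_5^3$;
\item index $3$ for $G=2$, via Proposition~\ref{prop:(2,3)};
\item excluding $16$ for $G=2$. There $\Aut(X)$ is abelian, and the paper uses the classification of abelian automorphism groups of order $32$ in \cite{mayanskiy2013abelianautomorphismgroupscubic}.
\end{itemize}
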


At the cases where the coinvariant lattice $S_G(X)$ has rank 19, we provide a complete classification of the groups $\Aut(X)$:

\begin{thm} 

\label{thm:19_class}
    Suppose $X$ is a smooth cubic fourfold with $\rank(S_{\Aut^s(X)}(X))=19$, then all possible pairs $(\Aut^s(X),\Aut(X))$ are the pairs in Table \ref{table:19class} as below. 
    \begin{longtable}{|p{.24\textwidth} | c | p{.24\textwidth} | c |}
    \hline
    $(\Aut^s(X),\Aut(X))$ & $\mathrm{Index}$ & $(\Aut^s(X),\Aut(X))$ & $\mathrm{Index}$\\
    \hline
    $(3^{1+4}:2.2, 3^{1+4}:D_8)$ & $2$  & $(S_5,S_5\times 2)$ & $2$ \\
    \hline
    $(A_6,A_6)$ & $1$ & $(M_9,M_9)$ & $1$\\
    \hline
    $(A_6,S_6)$ & $2$ & $(M_9,M_9:3)$ & $3$\\
    \hline
    $(L_2(7),L_2(7))$ & $1$ & $(N_{72},N_{72}\times 2)$ & $2$ \\
    \hline
    $(L_2(7),L_2(7):2)$ & $2$ & $(T_{48},T_{48})$ & $1$\\
    \hline
    $(S_ 5,S_5)$ & $1$ &  &\\
    \hline
    \caption{Rank $19$ Classification}
    \label{table:19class}
    \end{longtable}
\end{thm}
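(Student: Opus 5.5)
The plan starts from the Laza--Zheng classification \cite{laza2022automorphisms}: I would list every symplectic group $G$ whose coinvariant lattice $S=S_G(X)$ has rank $19$. The relevant groups are $3^{1+4}:2.2$, $A_6$, $L_2(7)$, $S_5$, $M_9$, $N_{72}$ and $T_{48}$, and each comes with a specific lattice $S$ of rank $19$. For such $G$, the invariant part $T=S^\perp\subset H^4(X,\ZZ)_{\mathrm{prim}}$ is a positive definite rank $2$ lattice containing $h^2$-orthogonal data. Since $H^{3,1}\oplus H^{1,3}$ spans a positive definite real $2$-plane, that plane must equal $T\otimes\RR$. So each such $G$ gives finitely many (usually one) isolated cubic fourfolds, and these are determined by the genus of $T$. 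The first step is therefore to determine, for each $G$, the possible binary forms $T=a^b c$ from the discriminant form of $S$ (Nikulin \cite{Nikulin_1980}), and to check that $\Aut^s(X)$ is exactly $G$ and not a larger group from the rank-$20$ list.

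Next I would describe $\Aut(X)$. By the global Torelli theorem, $\Aut(X)$ is the group of Hodge isometries of $H^4(X,\ZZ)$ preserving $h^2$. The quotient $\Aut(X)/G$ acts faithfully on $H^{3,1}$, so it is cyclic of order $m$. Moreover $m$ is realized by isometries of $T$ that act on the complex line $H^{3,1}\subset T\otimes\CC$ as a primitive $m$-th root of unity and that glue, via the isomorphism $A_T\cong A_S$, to an isometry of $S$ normalizing $G$. Because $T$ has rank $2$, $m\in\{1,2,3,4,6\}$, and $m\in\{3,6\}$ forces $T\cong\begin{pmatrix}2a&a\\a&2a\end{pmatrix}$-type (hexagonal), while $m=4$ forces a square form. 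So the computation is
\[
m=\bigl[\{\phi\in O(T)\ :\ \phi \text{ is a rotation and }\bar\phi\in \mathrm{Im}(O(S)\cap N(G)\to O(A_S))\}:\ \{\pm 1\}\cap\ker\bigr].
\]
Care is needed here: $-1$ on $T$ is non-symplectic, so it belongs to $\Aut(X)$ exactly when its gluing extends to an element normalizing $G$ and fixing $h^2$. For each pair, I would compute the image of $N_{O(S)}(G)$ (equivalently $O(S)$ acting on $A_S$ together with $\mathrm{Out}$ data) in GAP and intersect it with the rotation group of $T$.

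Finally, the extension type (e.g.\ $S_6$ versus $A_6.2$ or $A_6\times 2$, and $3^{1+4}:D_8$) would be pinned down by explicit equations. I would take the known cubic equations with the given symplectic group, for instance the Klein-type and Fermat-type equations and the Koike forms \cite{KOIKE202512}, exhibit the extra non-symplectic automorphism, and identify $\Aut(X)$ in GAP, cross-checking against Yang's list \cite{yang2024automorphism} of groups acting on cubic fourfolds. For cases with $m=1$ but two isolated points (such as $A_6$ and $L_2(7)$, which occur with indices $1$ and $2$), I would show that there are two non-isometric choices of $T$ in the relevant genus.

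I expect the main obstacle to be the lattice step: determining exactly which rotations of $T$ glue to isometries of $S$ normalizing $G$. This requires the image of $N_{O(S)}(G)\to O(A_S)$, which is computationally heavy for rank $19$. I would first try to bypass it using the explicit equations together with the upper bound $m\mid |O(T)^{+}|$. Only in the residual ambiguous cases would I compute discriminant-form actions.
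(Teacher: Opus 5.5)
Your strategy rests on a rank miscount, and this invalidates it. Since $\Lambda_0$ has rank $22$ and signature $(20,2)$, a rank-$19$ coinvariant lattice $S$ has complement $T$ of rank $3$ and signature $(1,2)$, not a definite binary form. So the plane spanned by $H^{3,1}\oplus H^{1,3}$ is not determined by $T$. It moves in a one-dimensional period domain, and each $G$ gives one or two one-dimensional families ($\dim = 20-\rank(S)=1$), not isolated points. What you describe is the rank-$20$ situation ($T\cong A_2(-3)$, $A_2(-5)$, \dots), which is already settled in \cite{laza2022automorphisms}.

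In rank $19$, $\Aut(X)$ is not determined by the lattice data $(G,S,T)$. Your gluing test for $-\id_T$ is correct, but it only yields the \emph{generic} index of a family (Corollary~\ref{cor:gen12}); larger groups appear at special members. This is exactly why $L_2(7)$ and $M_9$, each with a single family and a single $q_T$, realize both $(L_2(7),L_2(7))$ and $(L_2(7),L_2(7):2)$, respectively both $(M_9,M_9)$ and $(M_9,M_9:3)$. Your explanation by ``two non-isometric choices of $T$'' fails there. Likewise the bound $m\mid |O(T)^{+}|$ is useless, since $O(T)$ is an infinite arithmetic group for indefinite $T$.

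The missing idea is a complete analysis of the special points of each one-dimensional family. You must find every member whose stabilizer exceeds the generic one, and for each decide whether it is smooth and whether $\Aut^s$ stays equal to $G$ or jumps to a rank-$20$ group, in which case it is excluded. The paper does this by GIT:
\begin{enumerate}
\item it computes $N^\lambda(G)$ acting on $\PP\mathcal{V}_G$ and identifies stabilizers with $\Aut(X)$ or $N_{\Aut(X)}(G)$ (Lemma~\ref{lem:stabAut});
\item it locates the detectable points and, via Lemma~\ref{lem:detectibility}, matches them with the known rank-$20$ cubics (Fermat, the $A_{3,5}$-, $A_7$-, $M_{10}$- and $3^{1+4}:2.2^2$-cubics) or shows they are singular, leaving only genuine rank-$19$ special points such as the $M_9:3$ cubic;
\item for $3^{1+4}:2.2$ it uses the Hessian to reduce to automorphisms of the plane cubic $C_\lambda$ (Proposition~\ref{prop:3^1+4:2.2});
\item the case $L_2(7)$ is quoted from \cite{he2025cubicfourfoldsorder7automorphism}, and the GAP ``YYZ bounds'' cap the index.
\end{enumerate}

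A lattice-theoretic substitute is conceivable. You would classify the finite-order $\phi\in O(T)\setminus\{\pm\id\}$ that glue to isometries of $\Lambda$ fixing $h^2$ and normalizing $G$. Each such $\phi$ has isolated fixed points in the period domain, at which its rational $(\pm1)$-eigenvector $v\in T$ becomes algebraic. You would then check that the period avoids $\mathcal{C}_2\cup\mathcal{C}_6$, and whether $S$ and $v$ span, up to saturation, the coinvariant lattice of a larger symplectic group. Nothing of this kind appears in your plan, and without it you can neither produce nor rule out the non-generic pairs.
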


For details see \S \ref{sect:rk19} which finishes with the proof of this theorem. 

We also gave some general results to restrain non-symplectic indices and treat several other cases, and some of them are used to show Theorems \ref{thm:main} and \ref{thm:19_class}:
\begin{itemize}
    \item The non-symplectic index must be a power of two or three times a power of two (see Proposition \ref{prop:2k3l});
    \item If index $m$ could be realized in some moduli, many of $m$'s divisors could be realized in the moduli as well (see Corollaries \ref{cor:reduce} and \ref{cor:reduce_refined});
    \item Roughly speaking, in a connected moduli space, one could define ``generic non-symplectic index''. It must be either $1$ or $2$ and depends on lattice structures, which can be determined for each moduli (see Proposition-Definitions \ref{prop-def:generic1}, \ref{prop-def:generic2} and Corollary \ref{cor:gen12});
    \item For fixed symplectic automorphism group, the index must be a divisor of some bounds. See Proposition \ref{prop:YYZ_bounds} and the column ``YYZ bounds'' in Table \ref{table: main}.
    \item If the symplectic automorphism group is trivial, then all possible automorphism groups are those cyclic groups of order dividing $32$ or $48$ (see Proposition \ref{prop:sym_trivial});
    \item Other restrictions, see \S \ref{sect:small_rank} and Table \ref{table: main}.
\end{itemize}

The paper is organized as follows: \S \ref{sect:prel} recalls necessary preliminaries, including the lattice structure for the integral middle cohomology of cubic fourfolds, the global Torelli theorem for cubic fourfolds, and classification of symplectic automorphism groups together with their moduli. \S \ref{sect:latt} establishes results about moduli and lattices for analyzing possible non-symplectic indices. \S \ref{sect:indice} uses preparations in \S \ref{sect:latt} to give several results on certain restrictions and existence of non-symplectic indices. \S \ref{sect:calc} describes computational methods used in our work, including a sketch of how our GAP algorithm works, theoretic support for GIT calculation and ``GIT detectability'', and some basic results on how differentials could help to calculate automorphism groups. We then present the detailed analysis of the rank 19 case in \S \ref{sect:rk19}. Some cases could be described by both GIT and lattice theory. Some results for cases at small rank are presented in \S \ref{sect:small_rank}. The main table together with some explanations are given in \S \ref{sect:main_table} at the end.

The main results are tabulated in Table \ref{table: main}. Our whole GAP codes with explanations and results are shown in the complementary file \texttt{GAP Codes.txt}.

\section{Preliminary}\label{sect:prel}

\subsection{Hodge Structure and Global Torelli Theorem}

Let $X\subset\PP^5$ be a smooth cubic fourfold. It is well-known that the lattice $\Lambda\colon=H^4(X,\ZZ)\cong I_{21,2}$ is odd and unimodular. Let $h\in H^2(X, \ZZ)$ be the hyperplane class, and denote by $\Lambda_0$ the orthogonal complement of $h^2$ in $\Lambda$. Then $\langle h^2\rangle \cong \langle 3\rangle$ and $\Lambda_0\cong E_8^{\oplus 2}\oplus U^{\oplus 2}\oplus A_2$.

Moreover, one has Hodge numbers $h^{4,0}=h^{0,4}=0,h^{3,1}=h^{1,3}=1$ and $h^{2,2}=21$. Recall the global Torelli theorem for cubic fourfolds, originally shown in \cite{voisin1986torelli, voisin2008erratum}, with an alternative proof provided by \cite{looi2009period}:
\begin{thm}{(Global Torelli Theorem for Cubic Fourfolds)}
\label{theorem: global torelli}
    Let $X$ and $X^{\prime}$ be two smooth cubic fourfolds, then any Hodge isometry $H^4(X^{\prime},\ZZ)\xrightarrow{\sim} H^4(X,\ZZ)$ preserving the square of hyperplane class must be induced by a unique isomorphism $X\xrightarrow{\sim} X^{\prime}$.
\end{thm}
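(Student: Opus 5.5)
The plan is to deduce the statement from the global Torelli theorem for hyperkähler manifolds (Verbitsky, Markman), using the Fano variety of lines. This avoids Voisin's original degeneration argument through cubics containing a plane.

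\emph{Step 1: transfer the Hodge isometry to the Fano varieties.} For a smooth cubic fourfold $X$, let $F(X)$ be its Fano variety of lines. By Beauville--Donagi, $F(X)$ is a hyperkähler fourfold of $K3^{[2]}$-type. The Abel--Jacobi map $\alpha\colon H^4(X,\ZZ)\to H^2(F(X),\ZZ)$, given by $p_*q^*$ through the universal line, has the following properties:
\begin{itemize}
\item it sends $h^2$ to the Plücker polarization $g$;
\item it restricts to a Hodge isometry, up to the sign $-1$, from $\Lambda_0=(h^2)^{\perp}$ onto the primitive lattice $g^{\perp}\subset H^2(F(X),\ZZ)$ with respect to the Beauville--Bogomolov form.
\end{itemize}
Given $\phi\colon H^4(X',\ZZ)\to H^4(X,\ZZ)$ preserving $h^2$, conjugating by $\alpha$ yields a Hodge isometry $\psi\colon H^2(F(X'),\ZZ)\to H^2(F(X),\ZZ)$ with $\psi(g')=g$. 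Here one uses that $H^2(F)$ is an overlattice of $\ZZ g\oplus g^{\perp}$ glued along the discriminant, and that $\phi$ respects the matching gluing on the cubic side.

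\emph{Step 2: realize $\psi$ by an isomorphism of Fano varieties.} Composing with $-\mathrm{id}$ if necessary, I will show that $\psi$ is a parallel-transport operator. For $K3^{[2]}$-type the monodromy group is the subgroup of $O(H^2)$ of orientation-preserving isometries, which is of index $2$. Since $\psi$ fixes the ample class $g$ and preserves $H^{2,0}$, it also preserves the positive cone. Markman's Hodge-theoretic Torelli then produces a birational map $F(X)\dashrightarrow F(X')$. Because $\psi$ sends the ample class $g$ to the ample class $g'$, this map extends to an isomorphism $f\colon F(X)\xrightarrow{\sim}F(X')$ with $f^*g'=g$.

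\emph{Step 3: recover the cubic from $(F(X),g)$.} This is the step I expect to be the main obstacle. One route is to show that $f$ is induced by a projective linear map. Since $f^*g'=g$, $f$ is compatible with the Plücker embeddings $F\subset G(2,6)\subset\PP^{14}$. One must then show that the $\PGL_6$-orbit of the embedding is determined. I would try to exploit the fact that $F(X)$ is the zero locus of a section of $\Sym^3 S^\vee$ on $G(2,6)$. The key subclaim is that $H^0(F(X),\mathcal O(g))\cong\wedge^2 V^\vee$ together with the restricted tautological bundle $S|_F$ is intrinsic; for instance, $S|_F$ is a rigid, $\mu$-stable bundle determined by its Chern classes. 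Then $f$ lifts to $G(2,6)$ and comes from $\PGL(V)$, and $X$ is recovered as the union of its lines.

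\emph{Step 4: uniqueness.} The ``unique'' part reduces to showing that an automorphism of $X$ acting trivially on $H^4(X,\ZZ)$ is the identity. Such an automorphism acts trivially on $H^{3,1}$, hence on $H^0(F,\Omega^2)$ and on $H^2(F)$. Automorphisms of a $K3^{[2]}$-type manifold acting trivially on $H^2$ are trivial (Beauville). Since $X$ is determined by $F(X)$ via Step 3, the automorphism of $X$ is trivial. Combining Steps 1--4 gives the statement as formulated, with the direction of the isomorphism $X\to X'$ matching $\phi$ through pullback.
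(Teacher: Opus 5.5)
The paper does not prove this statement; it quotes it from Voisin and cites Looijenga for an alternative proof. Your strategy is a genuinely different route: you transport $\phi$ to the Fano varieties via Beauville--Donagi and then apply Verbitsky--Markman, which is essentially Charles's deduction of Voisin's theorem. Steps 1, 2 and 4 are essentially fine, but two small repairs are needed there.
In Step 2, the birational map from Markman's part (1) need not induce $\psi$. Instead, use directly that a parallel-transport Hodge isometry sending a Kähler class to a Kähler class is induced by an isomorphism. No $-\mathrm{id}$ twist is needed either: $\psi(g')=g$ together with $\psi(H^{2,0})=H^{2,0}$ already makes $\psi$ orientation preserving.
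In Step 4, injectivity of $\Aut(F)\to O(H^2(F,\ZZ))$ for Fano varieties requires Beauville's result for Hilbert squares combined with deformation invariance of this kernel (Hassett--Tschinkel). Beauville alone does not cover it.

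The genuine gap is Step 3, which carries the whole argument. Your key subclaim, that $S|_F$ is rigid, $\mu$-stable and determined by its Chern classes (hence intrinsic to $(F,g)$), is neither proved nor quotable. Uniqueness of stable or rigid bundles with prescribed invariants is a K3-surface phenomenon (Mukai) with no general analogue on hyperkähler fourfolds, and nothing in your sketch gives it for $S|_F$ on every $F(X)$. So, as written, you do not get from $f\colon F(X)\xrightarrow{\sim}F(X')$ with $f^*g'=g$ to a linear map $\PP(V)\to\PP(V')$.

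The missing idea is a projective-normality computation that makes the bundle unnecessary.
\begin{enumerate}
\item On $G=G(2,V)$, $F$ is the zero locus of a regular section of $\Sym^3S^\vee$. Twist its Koszul resolution, whose terms are $\wedge^j(\Sym^3S)$, by $\mathcal O_G(1)$ and by $\mathcal O_G(2)$. Borel--Weil--Bott shows that every term with $j\geq 1$ is acyclic.
\item Hence $H^0(F,g)=\wedge^2V^\vee$, so $f$ extends to a linear isomorphism $\PP(\wedge^2V)\to\PP(\wedge^2V')$.
\item Also $H^0(G,\mathcal O_G(2))\cong H^0(F,2g)$, so the quadrics of $\PP^{14}$ containing $F$ are exactly the $15$ Plücker quadrics, which cut out $G$.
\item Therefore the extension maps $G(2,V)$ onto $G(2,V')$. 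Since $\Aut(G(2,6))=\PGL_6$, it is induced by some $A\colon\PP(V)\xrightarrow{\sim}\PP(V')$.
\item Then $A$ carries lines of $X$ to lines of $X'$, so $A(X)=X'$ because $X$ is covered by its lines.
\item Finally, $A$ induces $f$ on Fano varieties, and naturality plus injectivity of $\alpha$ give $A^*=\phi$.
\end{enumerate}
This also settles the last point of Step 4 directly. An automorphism of $X$ inducing the identity on $F(X)$ fixes every line. Through a general point of $X$ pass two distinct lines meeting only there, so that point is fixed, and the automorphism is the identity.
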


The automorphism group of cubic fourfold $X$ is denoted by $\Aut(X)$, and the symplectic automorphism group of $X$, consisting of automorphisms acting trivially on $H^{3,1}(X,\CC)$, is denoted by $\Aut^s(X)$. The general result \cite{Matsumura1963OnTA} implies that any automorphism of cubic fourfolds must be linear and that $\Aut(X)$ is finite.

Applying Theorem \ref{theorem: global torelli} to the case $X^{\prime}=X$, one obtain (see also \cite[Proposition 1.3]{zheng2021orbifold} for more details)
\begin{lem}
    Let $X$ be any smooth cubic fourfold, then there is a canonical isomorphism
    \[
    \Aut(X)\cong O_{HS}(\Lambda, h^2)
    \]
    where the latter group consists of automorphisms of the lattice $\Lambda$ preserving the Hodge decomposition and $h^2$.
\end{lem}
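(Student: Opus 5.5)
The plan is to build the two maps and show they are mutually inverse.

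\emph{From automorphisms to isometries.} Send $g\in\Aut(X)$ to $(g^{-1})^*$ (or $g^*$ with the matching variance convention) acting on $\Lambda=H^4(X,\ZZ)$. This is well defined and lands in $O_{HS}(\Lambda,h^2)$ for three reasons:
\begin{itemize}
\item pullback by a biholomorphism preserves the cup product, so it is an isometry;
\item it commutes with the Hodge decomposition, since pullback preserves the $(p,q)$ type of forms;
\item it fixes $h^2$, because every automorphism of $X$ is linear by \cite{Matsumura1963OnTA}, so it preserves $\mathcal{O}_X(1)$ and hence $h$.
\end{itemize}
Functoriality of pullback makes this assignment a group homomorphism $\Aut(X)\to O_{HS}(\Lambda,h^2)$.

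\emph{Surjectivity.} Apply Theorem~\ref{theorem: global torelli} with $X'=X$. Any $\phi\in O_{HS}(\Lambda,h^2)$ is a Hodge isometry $H^4(X,\ZZ)\to H^4(X,\ZZ)$ preserving $h^2$. The theorem then says $\phi$ is induced by an isomorphism $X\to X$, that is, by an automorphism.

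\emph{Injectivity.} The uniqueness clause of the same theorem applied to $\phi=\id$ shows that an automorphism acting trivially on $H^4$ is the identity. If one prefers not to lean on that clause, there is a fallback. A linear automorphism acting trivially on cohomology acts trivially on $H^{3,1}\cong H^1(X,T_X)$-type data, and hence on the Jacobian ring description via Griffiths residues. One then checks that a nontrivial projective linear map preserving $F$ cannot act trivially on all of $H^{3,1}\oplus H^{2,2}_{\text{prim}}$, since these span residues of $\Omega/F^2$ and $P\Omega/F^3$ for all $P$ in the degree-$3$ Jacobian ring. This second route is exactly the argument of \cite[Proposition 1.3]{zheng2021orbifold}.

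\emph{Canonicity.} The map involves no choices, so the isomorphism is canonical.

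The main obstacle is injectivity, if uniqueness is not taken as part of the Torelli statement. The fallback handles it through the residue computation, which forces the action to be scalar on the cubic coordinates and therefore trivial in $\PGL(6,\CC)$.
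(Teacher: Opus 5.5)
Your proposal is correct and matches the paper, which simply applies Theorem~\ref{theorem: global torelli} with $X'=X$: existence gives surjectivity, uniqueness gives injectivity, and the paper refers to \cite[Proposition 1.3]{zheng2021orbifold} for details. The residue-based fallback for injectivity is unnecessary given the uniqueness clause in the stated Torelli theorem, and as written it is only a sketch (for instance $H^{3,1}$ is one-dimensional and is not isomorphic to $H^1(X,T_X)$), so you can drop it.
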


\subsection{Lattice Theory}

In this section we briefly recall some lattice-theoretic results. For more details see \cite{Nikulin_1980}.

By a lattice we mean a free $\ZZ$-module of finite rank equipped with an integer-valued non-degenerate symmetric bilinear form. For lattice $L$, its discriminant group, which is a finite abelian group, is defined to be $A_L\colon=L^\vee/L$ where $L^\vee$ is the dual lattice of $L$ defined to be $\{x\in L\otimes\QQ:\langle x,y\rangle\in\ZZ,\forall y\in L\}$. The length $\ell(A_L)$ of $A_L$ is defined as the minimal number of generators of $A_L$ as a group. When $L$ is even (i.e. $\langle x,x\rangle\in 2\ZZ$ for all $x\in L$), we could define the discriminant quadratic form $q_L$ on $A_L$ by $q_L(x+L)=\langle x,x\rangle\mod{2\ZZ}$. Discriminant quadratic forms can be expressed by the Conway-Sloane symbol.

Given lattice $L$ and group $G$, for a $G$-action on $L$ we mean a linear action which preserves the bilinear form. This induces a $G$-action on $A_L$ which preserves $q_L$ if $L$ is even. The invariant lattice under $G$-action is defined as $L^G\colon=\{x\in L:gx=x,\forall g\in G\}$, and the coinvariant lattice is defined to be $S_G(L)\colon=(L^G)^\perp$ i.e. the orthogonal complement of $L^G$ in $L$.

Recall the definition of Leech pair
\begin{defn}
    We say a pair $(G,S)$ where $G$ is a finite group acting faithfully on lattice $S$ is a Leech pair, if $S$ is even, positive definite and rootless (i.e. $\langle x,x\rangle\neq 2$ for all $x\in S$), $G$ acts trivially on $A_S$, and $S^G=0$.
\end{defn}

Recall the Leech lattice $\LL$ which is the unique even positive definite rootless lattice of rank $24$ up to isomorphism, with isometry group isomorphic to the Conway group $Co_0$. The following result has been widely applied, while a complete proof can be found in \cite{zheng2025lemmaleechlikelattices}.
\begin{lem}
    Assume $(G,S)$ is a Leech pair such that $\ell(A_S)+\rank (S)\leq 24$. Then there is a primitive embedding of $S$ into $\LL$ (i.e. $\LL/S$ is a free $\ZZ$-module) extending the $G$-action which is trivial on the orthogonal complement of $S$ in $\LL$.
\end{lem}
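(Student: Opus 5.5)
The plan is to avoid a direct rank-$24$ gluing argument, which would at best produce some Niemeier lattice and not necessarily $\LL$. Instead I would pass through the even unimodular Lorentzian lattice $II_{25,1}$ and use Conway's description of the Leech lattice as $w^\perp/\ZZ w$ for a Weyl vector $w$.

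\emph{Step 1: embed $S$ in $II_{25,1}$.} Put $r=\rank(S)$. I look for an even lattice $M$ of signature $(25-r,1)$ with $q_M=-q_S$. Its rank is $26-r\geq \ell(A_S)+2$ by hypothesis, and the signature condition mod $8$ holds. So Nikulin's existence theorem \cite[Cor.~1.10.2]{Nikulin_1980} gives such an $M$, which is indefinite. Gluing $S\oplus M$ along the anti-isometry $A_S\cong A_M$ produces an even unimodular lattice of signature $(25,1)$, namely $II_{25,1}$, in which $S$ is primitive with $S^\perp=M$. Since $G$ acts trivially on $A_S$, the action $G\times\mathrm{id}_M$ preserves the glue, so it extends to $II_{25,1}$ and fixes $M$ pointwise. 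Because $S^G=0$, we get $(II_{25,1})^G=M$.

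\emph{Step 2: find a $G$-stable Weyl chamber.} Consider the reflection group $W$ generated by the roots of $II_{25,1}$, acting on the positive cone. Take a $G$-invariant vector $v$ of negative norm that is generic in $M\otimes\mathbb{R}$; it exists because $M$ is hyperbolic. Suppose a root $\delta$ satisfies $\delta\cdot v=0$. By genericity $\delta\perp M$, so $\delta\in S$, which contradicts $S$ being rootless. Hence $v$ lies in the interior of a unique chamber $C$. Since $G$ fixes $v$ and permutes the chambers, $G$ preserves $C$.

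\emph{Step 3: pass to the Weyl vector.} By Conway's theorem, $C$ has a primitive isotropic Weyl vector $w$ with $w\cdot\delta=-1$ for every simple root $\delta$ of $C$, and $w^\perp/\ZZ w\cong\LL$. This $w$ is determined by $C$, so it is $G$-fixed and lies in $M$. The group $G$ preserves $w^\perp$ and $\ZZ w$, so it acts on $\LL=w^\perp/\ZZ w$. Now $S\subset w^\perp$, and the projection $S\to\LL$ is an isometric embedding because $S$ is positive definite, so $S\cap\ZZ w=0$. This embedding is $G$-equivariant. The image of $M\cap w^\perp$ spans the orthogonal complement of $S$ in $\LL$, and $G$ acts trivially on it. For primitivity, the saturation of $S$ in $\LL$ lifts to a sublattice of $w^\perp$ lying in $(S\oplus\ZZ w)\otimes\QQ$. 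Its image in $II_{25,1}$ modulo $\ZZ w$ is controlled by the primitivity of $S$ in $II_{25,1}$, and I expect a short check using $w\cdot II_{25,1}=\ZZ$ to finish this.

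\emph{Main obstacle.} The key input is Conway's theorem on the Weyl vector of $II_{25,1}$, which I would cite. The step needing the most care is Step 2: genericity of $v$ has to rule out every root orthogonal to $v$, and this is exactly where rootlessness of $S$ enters. The primitivity check in Step 3 is the remaining point that needs an actual verification rather than a citation.
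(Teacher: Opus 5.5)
The paper gives no argument for this lemma; it defers to \cite{zheng2025lemmaleechlikelattices}. So I assess your proposal on its own terms. Steps 1 and 2 are correct, and so is most of Step 3: $w$ is $G$-fixed and lies in $M$, and $S\to w^\perp/\ZZ w$ is a $G$-equivariant isometric embedding with $G$ trivial on the complement.

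\textbf{The gap is primitivity.} It is not a short check via $w\cdot \mathrm{II}_{25,1}=\ZZ$. That identity holds for every primitive vector of a unimodular lattice and carries no information. What matters is the divisibility of $w$ in $M$; write $w\cdot M=d\ZZ$.
\begin{itemize}
\item If $d>1$, then $\frac{1}{d}w\in M^\vee\setminus M$. By unimodularity its class in $A_M$ is glued to the class of some $s\in S^\vee\setminus S$, so $x=s+\frac{1}{d}w$ lies in $\mathrm{II}_{25,1}\cap w^\perp$. The image of $x$ in $\LL$ lies in $S\otimes\QQ$ (indeed $d$ times it lies in the image of $S$), but not in the image of $S$.
\item If $d=1$, then any $s+cw\in\mathrm{II}_{25,1}$ with $s\in S\otimes\QQ$ has $cw\in M^\vee$. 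Hence $c\in\ZZ$ and $s\in S$.
\end{itemize}
So the image of $S$ has index exactly $d$ in its saturation. You need a $G$-stable chamber whose Weyl vector satisfies $w\cdot M=\ZZ$, and an arbitrary $G$-stable chamber, as produced by Step 2, gives no control over this.

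This is a missing idea, not a routine verification, and you can see it from how the hypothesis is used. In your argument, $\ell(A_S)+\rank(S)\le 24$ enters only through \cite[Corollary 1.10.2]{Nikulin_1980} in Step 1, which needs only $\ell(A_S)+\rank(S)\le 25$. But a primitive embedding into $\LL$ forces $\ell(A_S)\le 24-\rank(S)$, since $S^\perp_{\LL}$ has rank $24-\rank(S)$ and discriminant form $-q_S$.

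Leech pairs with $\ell(A_S)+\rank(S)=25$ do exist. Take $g\in Co_0$ of frame shape $1^{-3}3^9$ and $u\in S_g(\LL)\otimes\QQ$ with $(1-g)u\in S_g(\LL)$ but $u\notin S_g(\LL)^\vee$. Then $S=\{x\in S_g(\LL): x\cdot u\in\ZZ\}$ gives a Leech pair $(\langle g\rangle,S)$ with $\rank(S)=18$ and $\ell(A_S)=7$. For such pairs your Steps 1--3 run verbatim and must output a non-primitive embedding. So the hypothesis has to be spent on choosing the chamber.

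One way to do this:
\begin{itemize}
\item Suppose $M\cong U\oplus K$ with $K$ positive definite. Glue $S\oplus K$ to a Niemeier lattice $N$.
\item If $N\neq\LL$, pick a $G$-stable Weyl chamber of the roots of $N$, using your rootlessness argument applied to a generic vector of $K\otimes\mathbb{R}$. Let $\rho$ be its Weyl vector and $h$ the Coxeter number of $N$.
\item Take the holy-construction vector $w=(\rho,h,h+1)\in K\oplus U=M$. Up to sign it pairs with the isotropic basis of $U$ to $h$ and $h+1$, so $w\cdot M=\ZZ$.
\end{itemize}
This needs a positive definite $K$ of rank $24-\rank(S)$ with $q_K\cong -q_S$. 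That is automatic by \cite[Corollary 1.10.2]{Nikulin_1980} when $\ell(A_S)<24-\rank(S)$. In the boundary case $\ell(A_S)=24-\rank(S)$ it requires the finer local conditions of \cite[Theorem 1.10.1]{Nikulin_1980} or a separate argument.
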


Note that for any subgroup $G<Co_0$, $(G,S_G(\LL))$ is a Leech pair. We say such a pair is saturated if $G$ exactly consists of all isometries of $S_G(\LL)$ which act trivially on $A_{S_G(\LL)}$. In \cite{hohn2016290} a full classification of saturated Leech pairs is established.

\subsection{Symplectic Automorphism Groups and Moduli}\label{subsect:ST}

Suppose $X$ is a smooth cubic fourfold and $G<\Aut(X)$. Let $\Lambda_0^G$ be the $G$-invariant sublattice of $\Lambda_0$, which is usually denoted by $T$; let $S_G(X)=(\Lambda_0^G)_{\Lambda_0}^\perp$ be the orthogonal complement of $\Lambda_0^G$ in $\Lambda_0$, which is said to be the $G$-coinvariant lattice and denoted by $S$. Both $S$ and $T$ are primitive sublattices of $\Lambda_0\subset \Lambda$.

When $G<\Aut^s(X)$, $(G,S)$ must be a so-called Leech pair. Using the classification of saturated Leech pairs in \cite{hohn2016290}, \cite{laza2022automorphisms} provides a complete classification of pairs $(G,S)$ with $G=\Aut^s(X)$, together with the $G$-action on $S$. The moduli of smooth cubic fourfolds admitting symplectic $G$-action is of dimension $20-\rank(S)$. There are $6$ maximal symplectic automorphism groups in the case $\rank(S)=20$.

The above dimension-counting formula is a special case \cite[Theorem 1.1(i)]{yu2020moduli} for general $G<\PGL_6(\CC)$ with fixed character and actions on $H^{3,1}(X,\CC)$, which would be used in \S \ref{sect:latt} as well.

\subsection{Actions on Equations}

Assume $n,d\geq 3$ and $\mathcal{G}$ is a subgroup of $\GL(n,\CC)$. Let $P_{n,d}^{\mathcal{G},\mathrm{sm}}$ be the (affine) space of polynomials $F\in P_{n,d}:=\Sym^d((\CC^*)^n)$ such that  $\mathcal{G}$ lies in $\Aut(F):=\{g\in \GL(n,\CC)\mid g(F)=F\}$, and the hypersurface $X_F$ in $\PP^{n-1}$ defined by $\{F=0\}$ is smooth. If $P_{n,d}^{\mathcal{G},\mathrm{sm}}$ is nonempty, from the classical result \cite{Matsumura1963OnTA} and the exact sequence
\[
    1 \to \langle\zeta_d I\rangle\to \Aut(F)\to \Aut(X_F)\to 1
\]
for any $F\in P_{n,d}^{\mathcal{G},\mathrm{sm}}$, we know that $\mathcal{G}$ is finite. In this case the space $P_{n,d}^{\mathcal{G},\mathrm{sm}}$ is irreducible.

The following natural result is important in enumerating families with certain actions. We write $P_{n,d}^{\mathrm{sm}}$ for the space of polynomials defining smooth hypersurfaces.

\begin{prop}\label{prop:GL_family}
    Let $\mathcal{G}$ be a finite subgroup of $\GL(n,\CC)$ such that $P_{n,d}^{\mathcal{G},\mathrm{sm}}$ is non-empty, then the followings are equivalent:
    \begin{enumerate}
        \item For generic $F\in P_{n,d}^{\mathcal{G},\mathrm{sm}}$, $\mathcal{G}$ is a proper subgroup of $\Aut(F)$.
        \item For any $F\in P_{n,d}^{\mathcal{G},\mathrm{sm}}$, $\mathcal{G}$ is a proper subgroup of $\Aut(F)$.
        \item There is a finite subgroup
        $\mathcal{G}'<\GL(n,\CC)$ with
        $\mathcal{G}\subsetneq \mathcal{G}'$, such that every
        $F\in P_{n,d}^{\mathcal{G},\mathrm{sm}}$ is linearly equivalent to
        some element of $P_{n,d}^{\mathcal{G}',\mathrm{sm}}$.
    \end{enumerate}
\end{prop}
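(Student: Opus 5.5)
We prove the cycle of implications $(2)\Rightarrow(1)\Rightarrow(3)\Rightarrow(2)$. The implication $(2)\Rightarrow(1)$ is trivial.

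For $(3)\Rightarrow(2)$: let $F\in P_{n,d}^{\mathcal{G},\mathrm{sm}}$. By hypothesis there is $A\in\GL(n,\CC)$ with $A\cdot F=F'\in P_{n,d}^{\mathcal{G}',\mathrm{sm}}$. Then $\Aut(F)=A\,\Aut(F')\,A^{-1}\supseteq A\mathcal{G}'A^{-1}$, which has order $|\mathcal{G}'|>|\mathcal{G}|$. All these groups are finite: $\Aut(F)$ is finite by the exact sequence recalled above and Matsumura's theorem. Since $\mathcal{G}<\Aut(F)$ and $|\Aut(F)|>|\mathcal{G}|$, the inclusion is proper.

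The main step is $(1)\Rightarrow(3)$, which needs a countability or finiteness argument. The key input is that $|\Aut(F)|$ is uniformly bounded for smooth $F$ of fixed $n,d$, for instance by Jordan-type bounds or by the finiteness of $\Aut$ of smooth hypersurfaces in families. Granting this, only finitely many finite groups $\mathcal{H}$ with $\mathcal{G}\subsetneq\mathcal{H}<\GL(n,\CC)$ can occur as $\Aut(F)$, up to conjugation by elements of $N:=N_{\GL(n,\CC)}(\mathcal{G})$. Indeed, there are finitely many abstract group types of bounded order. For each type, finite-group representations are rigid, so the embeddings extending the fixed $\mathcal{G}\hookrightarrow\GL(n,\CC)$ fall into finitely many conjugacy classes under the centralizer of $\mathcal{G}$. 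This rigidity is the step I expect to be the main obstacle to state cleanly.

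For each such $\mathcal{H}$, consider the set of $F\in P_{n,d}^{\mathcal{G},\mathrm{sm}}$ with $\Aut(F)\supseteq g\mathcal{H}g^{-1}$ for some $g\in N$. This set is the image of the morphism
\[
N\times P_{n,d}^{\mathcal{H},\mathrm{sm}}\to P_{n,d}^{\mathcal{G},\mathrm{sm}},\qquad (g,F')\mapsto g\cdot F'.
\]
Hence it is constructible by Chevalley's theorem. By (1), a generic $F$ has $\Aut(F)\supsetneq\mathcal{G}$, so $P_{n,d}^{\mathcal{G},\mathrm{sm}}$ is the union of these finitely many constructible sets. Here we may take $\mathcal{H}$ to be a group containing $\mathcal{G}$ with index minimal, or simply the whole $\Aut(F)$.

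Since $P_{n,d}^{\mathcal{G},\mathrm{sm}}$ is irreducible, one of these sets is dense; call the corresponding group $\mathcal{H}=\mathcal{G}'$. It remains to upgrade "dense" to "all". Let $F_0$ be arbitrary and take a generic curve through $F_0$ whose general point lies in the image. Along this curve the groups $\Aut(F_t)\supseteq g_t\mathcal{G}'g_t^{-1}$ specialize. The subgroups of $\GL(n,\CC)$ conjugate to $\mathcal{G}'$ by $N$ and containing $\mathcal{G}$ form a closed family: the condition that a representation extends $\mathcal{G}$ is closed, and the orbit is closed because representations of finite groups are rigid, so a limit of conjugates is conjugate. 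The condition "$F$ is invariant under the subgroup" is also closed. So the incidence variety
\[
\{(\mathcal{H}',F)\;:\;\mathcal{H}'\sim_N\mathcal{G}',\ \mathcal{H}'\subseteq\Aut(F)\}
\]
has closed projection, using compactness modulo $\mathcal{G}$-fixed conjugation. This projection is the finite union of $N$-orbit families, each proper over $P_{n,d}^{\mathcal{G},\mathrm{sm}}$ because the fibers are finite and the groups are bounded. Therefore it contains $F_0$, giving $g$ with $g^{-1}\cdot F_0\in P_{n,d}^{\mathcal{G}',\mathrm{sm}}$, which is (3).

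If properness proves hard to verify directly, I would instead argue via a degeneration: the limit of $g_t\mathcal{G}'g_t^{-1}$ as a finite subgroup of $\Aut(F_0)$ exists, since $\Aut(F_0)$ is finite and the elements stay bounded after normalizing on the $\mathcal{G}$-isotypic decomposition, and this limit has the same order.
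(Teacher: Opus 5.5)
Your $(2)\Rightarrow(1)$ and $(3)\Rightarrow(2)$ are fine. Your plan for $(1)\Rightarrow(3)$ has the same two-stage shape as the paper's. First, a single overgroup $\mathcal{G}'$, up to conjugacy, occurs on a dense subset of $P=P_{n,d}^{\mathcal{G},\mathrm{sm}}$. Then a closedness argument reaches every $F_0\in P$.

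Your first stage (finitely many $C_{\GL(n,\CC)}(\mathcal{G})$-classes of overgroups of bounded order, Chevalley, irreducibility) is a reasonable substitute for the paper's finite \'etale trivialization of the relative automorphism scheme. Two small corrections:
\begin{itemize}
\item Jordan's theorem does not bound $|\Aut(F)|$. A uniform bound follows from finiteness of the stabilizer scheme over $P^{\mathrm{sm}}_{n,d}$, or you can bypass it with a countability argument over $\CC$.
\item Hypothesis (1) only puts a dense open subset of $P$, not all of $P$, inside the union of your constructible sets. That is all you actually use.
\end{itemize}

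The genuine gap is the ``dense $\Rightarrow$ all'' step. You must show: if $F_t\to F_0$ in $P$ and $g_t\mathcal{G}'g_t^{-1}\subset\Aut(F_t)$, then $\Aut(F_0)$ contains a conjugate of $\mathcal{G}'$. Rigidity (closedness of conjugacy classes in $\Hom(\mathcal{G}',\GL(n,\CC))$) only helps once you know the matrices $g_thg_t^{-1}$, $h\in\mathcal{G}'$, converge in $\GL(n,\CC)$. None of your justifications gives that:
\begin{itemize}
\item The set of $N$-conjugates of $\mathcal{G}'$ is in general a positive-dimensional affine homogeneous space of $N$, hence not compact.
\item Elements of bounded finite order can be unbounded, e.g.\ the involutions $\left(\begin{smallmatrix}1&t\\0&-1\end{smallmatrix}\right)$.
\item Finite fibers make your incidence variety only quasi-finite over $P$, and quasi-finite does not imply proper. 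Finiteness of $\Aut(F_0)$ is a statement about a single fiber. It says nothing about stabilizing elements of nearby $F_t$ escaping to infinity.
\item Your fallback of ``normalizing on the $\mathcal{G}$-isotypic decomposition'' amounts to conjugating by elements of $C_{\GL(n,\CC)}(\mathcal{G})$. That moves $F_t$ and destroys the convergence $F_t\to F_0$.
\end{itemize}

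The missing idea is stability. Smooth forms of degree $d\geq 3$ are GIT-stable, so the $\GL(n,\CC)$-action on $P^{\mathrm{sm}}_{n,d}$ is proper. Hence the stabilizer scheme $\{(F,g): g(F)=F\}\to P^{\mathrm{sm}}_{n,d}$ is finite, not merely quasi-finite. This keeps the elements $g_thg_t^{-1}$ in a compact set. A limit homomorphism $\mathcal{G}'\to\Aut(F_0)$ then exists, and by your rigidity remark it is conjugate to the inclusion. This is exactly the ``classical result from GIT'' the paper invokes, in the form that $\Sigma_{\mathcal{G}'}=P_{n,d}^{\mathcal{G}',\mathrm{sm}}\cdot\GL(n,\CC)$ is closed in $P^{\mathrm{sm}}_{n,d}$. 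With that fact cited or proved, your argument goes through.
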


\begin{proof}
    We only have to show $(1)\Rightarrow(3)$. Put $P=P_{n,d}^{\mathcal G,\mathrm{sm}}.$ 

    Consider the relative automorphism scheme
    \[
        \mathcal A = \{(F,g)\in P\times \GL(n,\CC)\mid g(F)=F\}
    \]
    with the natural projection $\pi:\mathcal A\to P$.
    For every $F\in P$, the fiber of $\pi$ is $\Aut(F)$. By
    Matsumura's theorem \cite{Matsumura1963OnTA}, $\Aut(F)$ is finite. Hence $\pi$ is quasi-finite.

    By assumption, there is a non-empty Zariski open subset $U\subset P$,such that $\mathcal G\subsetneq \Aut(F)$ for every $F\in U$. Since $P$ is irreducible, $U$ can be chosen to be irreducible and thus connected. Shrinking $U$ further if necessary, we may assume that
    \[
        \pi_U:\mathcal A_U(:=\mathcal A\times_P U)\longrightarrow U
    \]
    is finite étale. Indeed, one first removes the images of those irreducible
    components of $\mathcal A$ which do not dominate $P$; on the remaining
    dominating components, quasi-finiteness gives finiteness over a non-empty
    open subset, and generic flatness gives flatness after shrinking. Since all
    geometric fibers are finite reduced groups in characteristic $0$, the
    resulting finite flat morphism is finite étale.

    The inclusion $\mathcal G\subset \Aut(F)$ for all $F\in P$ gives a constant subgroup family $\mathcal G\times U\subset \mathcal A_U$. Since on $U$ we have $\mathcal G\subsetneq \Aut(F)$, the finite étale
    group scheme $\mathcal A_U\to U$ has degree strictly larger than
    $|\mathcal G|$. Since $\mathcal A_U\to U$ is a finite étale group scheme over the
    connected base $U$, after passing to a connected finite étale cover $\tau:\widetilde U\longrightarrow U$ we may trivialize it as a constant family of finite groups:
    \[
        \mathcal A_U\times_U \widetilde U
        \simeq
        H\times \widetilde U,
    \]
    where $H$ is a finite abstract group. Under this trivialization, the
    subgroup family $\mathcal G\times \widetilde U$ corresponds to a fixed
    subgroup $H_{\mathcal G}\subset H$ which is identified with $\mathcal G$.

    The natural inclusion $ \mathcal A_U\subset \GL(n,\CC)\times U$ gives, after pulling back to $\widetilde U$, a family of faithful representations
    \[
        \rho_{\widetilde u}:H\longrightarrow \GL(n,\CC),
        \qquad \widetilde u\in \widetilde U.
    \]
    Since the isomorphism class of representation of a finite group is discrete, all of $\rho_{\tilde{u}}$ are conjugate.

    Fix one point $\tilde u_0\in \widetilde U$, and define $\mathcal G'=\rho_{\tilde u_0}(H)\subset \GL(n,\CC).$ Since $\rho_{\tilde u_0}(H_{\mathcal G})=\mathcal G$, we have $\mathcal{G}\subsetneq\mathcal{G}'$, and naturally any $F\in U$ is linearly equivalent to an element of
    $P_{n,d}^{\mathcal G',\mathrm{sm}}$. Equivalently, $U\subset P\cap \Sigma_{\mathcal G'}$ where
    \[
    \Sigma_{\mathcal{G'}} =P_{n,d}^{\mathcal{G}',\mathrm{sm}}\cdot \GL(n,\CC)\subset P_{n,d}^\mathrm{sm}.
    \]
    As a classical result from GIT, $\Sigma_{\mathcal G'}$ is closed in $P_{n,d}^\mathrm{sm}$. Hence $P\cap \Sigma_{\mathcal G'}$ is closed in $P$. Since it contains the non-empty open subset $U$, and since $P$ is irreducible, we get $P\subset \Sigma_{\mathcal G'}$. This proves $(1)\Rightarrow(3)$.
\end{proof}

Any triple $(G,\rho,\xi)$ defined at the beginning of \S \ref{subsect:moduli-dim} naturally corresponds to a finite subgroup $\mathcal{G}$.

\section{Moduli and Lattice}\label{sect:latt}

Note that $\Aut^s(X)\lhd\Aut(X)$ for a smooth cubic fourfold $X$. For convenience we give the following definitions.

\begin{defn}
    Let $X$ be a smooth cubic fourfold. Its non-symplectic index $m(X)$ is defined to be the index $[\Aut(X):\Aut^s(X)]$.
\end{defn}

\begin{defn} 
    Let $G$ be a finite group, $m$ be an integer, $\rho:G\hookrightarrow \PGL(6,\CC)$ be a group embedding and $\xi:\pi^{-1}(\rho(G))\to\CC^\times$ be a character of the inverse image of $\rho(G)$ in $\GL(6,\CC)$.
    \begin{enumerate}
        \item We say $(G,m,\rho,\xi)$ is admissible, if there exists a smooth cubic fourfold $X$ of non-symplectic index $m$ such that $\Aut^s(X)=\rho(G)$, and $\pi^{-1}(\rho(G))$ acts on the defining equation of $X$ by scalar $\xi$.
        \item We say $(G,m)$ is admissible, or $m$ is admissible for $G$, if there exist $\rho$ and $\xi$ such that $(G,m,\rho,\xi)$ is admissible.
    \end{enumerate}
\end{defn}

\subsection{Moduli Spaces and Dimension-Counting Formula}\label{subsect:moduli-dim}

Suppose a finite group $G$ acts by $\rho:G\hookrightarrow \PGL(6,\CC)$ on $\PP^5$, and $G$ preserves some smooth cubic fourfold $X\subset \PP^5$.  And $\pi^{-1}(\rho(G))$ acts on the defining equation of $X$ by the scalar $\xi:\pi^{-1}(\rho(G))\to\CC^\times$. Let $G^s=G\cap \Aut^s(X)\lhd G$. Then the induced action on $H^{3,1}(X,\CC)$ gives a character $\chi:G\to U(1)\subset\CC^\times$ which factors through quotient map $G\to G/G^s$. Since $\Aut(X)/\Aut^s(X)$ is finite and can be embedded into $U(1)\subset \CC^\times$, it is isomorphic to the group of $m$-th roots of unity $\mu_m$ where $m=[\Aut(X):\Aut^s(X)]$. We have a short exact sequence 
\[
    1\to \Aut^s(X)\to\Aut(X)\to\mu_m\to 1
\]
and
\[
    1\to G^s\to G\to G/G^s\to 1
\]
where $G/G^s$ is a (cyclic) subgroup of $\mu_m$.

Moreover, if we fix $(G,\rho,\chi)$, smooth cubic fourfolds preserved by $(G,\rho)$ through the scalar $\xi$ share the same character $\chi:G\to \CC^\times$ acting on $H^{3,1}$. From \cite[Proposition 4.1]{yu2020moduli}, one could construct the period domain $\DD_{(G,\rho,\xi)}$ from the $\chi$-eigenspace $(\Lambda_{0,\CC})^\chi\colon=\{x\in \Lambda_{0,\CC}:gx=\chi(g)x,\forall g\in G\}$ with a Hermitian form defined by the intersection form.

As a corollary, if $\chi$ is real (i.e. $\chi=\overline{\chi}$, or equivalently $\chi(G)\subset\{\pm 1\}$), then $h$ has signature $(\dim (\Lambda_{0,\CC})^\chi-2,2)$ and $\DD_{(G,\rho,\xi)}$ is a type IV symmetric domain of dimension $(\Lambda_{0,\CC})^\chi-2$; if $\chi$ is non-real (i.e. $\chi\neq\overline{\chi}$), then $h$ has signature $(\dim(\Lambda_{0,\CC})^\chi-1,1)$ and $\DD_{(G,\rho,\xi)}$ is a complex hyperbolic ball of dimension $\dim (\Lambda_{0,\CC})^\chi-1$. 

On the other hand, from \cite[Theorem 1.1(i)]{yu2020moduli} we can define the moduli space $\mathcal{F}_{G,\rho,\xi}$ of smooth cubic fourfolds preserved by $G$ through the scalar action of $\pi^{-1}(\rho(G))$ on the defining equation with character $\chi$. We sketch the construction as follows:

Let $\mathcal{V}_{G,\rho,\xi}$ be the set of degree-$3$ polynomials with $6$ variables invariant under $\rho(G)$ up to scalar determined by $\xi$. And let $\PP \mathcal{V}_{G,\rho,\xi}$ be the projectivization.

Let $\widehat{\rho(G)}$ be the preimage of $\rho(G)$ in $\SL(6,\CC)$ and $\lambda$ be the restriction of $\xi$ on $\widehat{\rho(G)}$. Define 
\[
N^{\lambda}_{\rho}(G)=\{g\in \SL(6,\CC)\ | \ g(\widehat{\rho(G)})g^{-1}=\widehat{\rho(G)}, \lambda(ghg^{-1})=\lambda(h), \forall h\in \widehat{\rho(G)}\},
\]
which is a reductive group and acts naturally on $\PP \mathcal{V}_{G,\rho,\xi}$.

Denote by $\mathcal{V}_{G,\rho,\xi}^{sm}$ and $\mathcal{V}_{G,\rho,\xi}^{ss}$ the smooth and semi-stable elements in $\mathcal{V}_{G,\rho,\xi}$ under the action of $N^{\lambda}_{\rho}(G)$, and $\PP\mathcal{V}_{G,\rho,\xi}^{sm}$, $\PP\mathcal{V}_{G,\rho,\xi}^{ss}$ their projectivization. Define $\mathcal{F}_{G,\rho,\xi}$ to be the GIT quotient $N^{\lambda}_{\rho}(G)\dbs \PP\mathcal{V}_{G,\rho,\xi}^{sm}$ with its GIT compactification $\overline{\mathcal{F}}_{G,\rho,\xi}=N^{\lambda}_{\rho}(G)\dbs \PP\mathcal{V}_{G,\rho,\xi}^{ss}$.

Note that cubic fourfolds in this moduli share the same lattice structure with $G$-action.

\begin{prop}[{\cite[Theorem 1.1(i)]{yu2020moduli}}]
\label{proposition: moduli_dimension}
 The dimension of $\mathcal{F}_{G,\rho,\xi}$ equals the dimension of $\DD_{(G,\rho,\xi)}$.
\end{prop}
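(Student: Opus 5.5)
The plan is to deduce the statement from the two descriptions of the same space already set up above: the GIT quotient $\mathcal{F}_{G,\rho,\xi}=N^{\lambda}_{\rho}(G)\dbs\PP\mathcal{V}^{sm}_{G,\rho,\xi}$ and the arithmetic quotient of $\DD_{(G,\rho,\xi)}$. The key steps are (1) a period map, (2) a Torelli-type injectivity argument, and (3) local surjectivity via Griffiths transversality and the unobstructedness of cubic deformations.

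\textbf{Step 1: the period map.} We would construct an equivariant period map. Fix a reference smooth cubic $X_0$ in the family and a $G$-action on $\Lambda$ fixing $h^2$, and let $\Gamma$ be the group of isometries of $\Lambda$ fixing $h^2$ that commute with the $G$-action. For $X\in\PP\mathcal{V}^{sm}_{G,\rho,\xi}$, the $G$-action on $H^4(X,\ZZ)$ is locally constant in the (connected, since $\mathcal{V}_{G,\rho,\xi}$ is a linear space) smooth locus. Hence the line $H^{3,1}(X)$ lies in the $\chi$-eigenspace $(\Lambda_{0,\CC})^\chi$, and the Hodge–Riemann relations place it in $\DD_{(G,\rho,\xi)}$. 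This gives a holomorphic map $\mathcal{F}_{G,\rho,\xi}\to\Gamma\backslash\DD_{(G,\rho,\xi)}$, where the action of $N^\lambda_\rho(G)$ is absorbed by $\Gamma$.

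\textbf{Step 2: injectivity.} By Theorem \ref{theorem: global torelli}, two cubics with $\Gamma$-equivalent periods are isomorphic via some $\phi$ that carries the $G$-action to itself, up to an automorphism. Such a $\phi$ conjugates $\rho(G)$ and preserves the scalar character $\lambda$, so it lies in $N^\lambda_\rho(G)$. The period map is therefore injective on points, and $\dim\mathcal{F}\le\dim\DD$.

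\textbf{Step 3: the reverse inequality.} This is where the real work lies. We would compare tangent spaces. The deformations of $X$ as a hypersurface are given by $\Sym^3(\CC^6)^*/\mathfrak{gl}_6\cdot F$, identified via Griffiths' residue calculus with $H^{1}(T_X)\cong \Hom(H^{3,1},H^{2,2}_{prim})$. Taking $G$-isotypic parts for the appropriate twist by $\xi$, the tangent space to $\mathcal{F}$ at $X$ is the subspace of equivariant deformations, namely the $G$-invariant part of the degree-$3$ Jacobian ring twisted by $\xi$. On the period side, the tangent space to $\DD$ is $\Hom(H^{3,1},(H^{2,2}_{prim})^\chi)$ in the ball case, or its real analogue in the type IV case; in both cases its dimension is $\dim(\Lambda_{0,\CC})^\chi-1$ or $-2$ respectively. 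The infinitesimal Torelli isomorphism is $G$-equivariant, so it restricts to an isomorphism of these eigenspaces, and the period map is a local isomorphism. Together with Step 2 this gives equal dimensions.

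The main obstacle is the careful bookkeeping of characters, matching the $\xi$-twist on polynomials with $\chi$ on $H^{3,1}$ through the residue map, which introduces the factor $\det$. I would check this first, since an error there would misidentify the eigenspace.
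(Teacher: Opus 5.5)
The paper gives no argument of its own here and simply imports \cite[Theorem 1.1(i)]{yu2020moduli}. Your outline is correct and is the standard argument behind that result: global Torelli gives injectivity of the equivariant period map, and the $G$-equivariant infinitesimal Torelli identifies $\mathcal{V}_{G,\rho,\xi}/(\mathfrak{gl}_6^{G}\cdot F)$, the $\xi$-part of $J(F)_3$, with $\Hom(H^{3,1},(H^{2,2}_{\mathrm{prim}})^{\chi})$, which is the tangent space of $\DD_{(G,\rho,\xi)}$ in both the ball and type IV cases.

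A few small remarks:
\begin{enumerate}
\item The character bookkeeping you flag is harmless. The infinitesimal period map sends $A$ to $\mathrm{Res}(\Omega/F^2)\mapsto\mathrm{Res}(A\Omega/F^3)$, and this is $G$-equivariant exactly when $A/F$ is $G$-invariant, so the factor $\det$ cancels.
\item Step 3 alone already yields the equality of dimensions.
\item $\Gamma$ should be taken to normalize rather than centralize the $G$-action, since $N^{\lambda}_{\rho}(G)$ can induce outer automorphisms of $G$ (e.g.\ $M_{10}$ normalizing $A_6$). This only changes things up to finite index.
\end{enumerate}
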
 

\subsection{Discriminant Forms} Following the notations in \S \ref{subsect:ST}, let additionally $G=\Aut^s(X)$. Recall that there are two embeddings $\Lambda_0\oplus E_6\hookrightarrow\mathrm{II}_{26,2}$ and $S\oplus T\hookrightarrow\Lambda_0$ with direct summands primitive. Thus we have the following embedding
\begin{equation}\label{eq:E6}
    S\oplus T\oplus E_6\hookrightarrow\mathrm{II}_{26,2}
\end{equation}
with three direct summands primitive. The following result is well-known. We give a proof for reader's convenience.

\begin{prop}\label{prop:either_ST_oplus_E6_primitive}
    Assume generally we have overlattice $S\oplus T\oplus E_6\hookrightarrow\mathrm{II}_{26,2}$ with each direct summand primitive. Then it must be one of the following two cases:
    \begin{enumerate}
        \item $T\oplus E_6$ is primitive in $\mathrm{II}_{26,2}$. In this case $q_S=3^{-1}\oplus (-q_T)$.
        \item $S\oplus E_6$ is primitive in $\mathrm{II}_{26,2}$. In this case $q_T=3^{-1}\oplus (-q_S)$.
    \end{enumerate}
\end{prop}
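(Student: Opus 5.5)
We work with the discriminant groups via Nikulin's theory of overlattices. Put $M=S\oplus T\oplus E_6$ and $L=\mathrm{II}_{26,2}$. Since $L$ is even unimodular, $L$ corresponds to an isotropic subgroup $H=L/M\subset A_S\oplus A_T\oplus A_{E_6}$. We have $A_{E_6}\cong \ZZ/3$ with $q_{E_6}=3^{+1}$ (value $4/3$ mod $2$). Moreover $H$ is isotropic and $H^\perp=H$, because $L$ is unimodular.

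Primitivity of each summand means that $H$ meets each of $A_S$, $A_T$, $A_{E_6}$ trivially. Equivalently, the three projections restricted to $H$ are injective on the complements of the other two factors. The rest of the argument is a short chain of deductions from this.

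\textbf{Step 1.} Consider the projection $p:H\to A_{E_6}\cong\ZZ/3$, and let $K=\ker p=H\cap(A_S\oplus A_T)$. Then $K$ is the gluing subgroup of the overlattice $(S\oplus T)^{\mathrm{sat}}$, so it is the graph of an anti-isometry between subgroups of $A_S$ and $A_T$.

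\textbf{Step 2.} Let $N$ be the saturation of $S\oplus T$ in $L$. Its orthogonal complement is $E_6$, which is primitive. Hence $N$ is primitive with $A_N\cong A_{E_6}$ and $q_N=-q_{E_6}=3^{-1}$; in particular $|K^\perp/K|=3$ inside $A_S\oplus A_T$. The same relation follows from counting: $|H|^2=|A_S||A_T|\cdot 3$ and $|H|=3|K|$ or $|H|=|K|$. Injectivity of $H\to A_S\oplus A_T$ (since $H\cap A_{E_6}=0$) forces $p$ to be surjective, so $|H|=3|K|$.

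\textbf{Step 3.} Now $N$ is a primitive overlattice of $S\oplus T$ with $S$ and $T$ primitive in $N$, and $A_N\cong\ZZ/3$. Here $K\subset A_S\oplus A_T$ is the graph of an anti-isometry $\gamma:K_S\to K_T$ with $K_S\subset A_S$ and $K_T\subset A_T$. Since $|A_S||A_T|/|K|^2=3$, exactly one of $[A_S:K_S]$, $[A_T:K_T]$ equals $3$ and the other equals $1$. This is the key point: $|A_S|=|K_S|\cdot a$ and $|A_T|=|K_T|\cdot b$ with $|K_S|=|K_T|$ and $ab=3$.

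Suppose $b=1$, so $K_T=A_T$. Then $\gamma^{-1}$ embeds $A_T$ into $A_S$ anti-isometrically, and $A_S$ has the subgroup $K_S\cong A_T$ with form $-q_T$. Its orthogonal complement in $A_S$ is isomorphic to $A_N$, giving $q_S=(-q_T)\oplus 3^{-1}$. This orthogonal splitting holds because $q_S|_{K_S}$ is nondegenerate, being isometric to $-q_T$. In this case $T\oplus E_6$ is primitive: an element of $H$ lying in $A_T\oplus A_{E_6}$ has zero $A_S$-component. Since $K_T=A_T$, such an element can be corrected by an element of $K$ to lie in $A_{E_6}$, where it must vanish. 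The case $a=1$ is symmetric and gives $q_T=3^{-1}\oplus(-q_S)$.

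The main obstacle is bookkeeping: identifying $A_N$ correctly and showing $q_S|_{K_S}$ is nondegenerate so that the splitting is orthogonal. For the latter we will use that $K_S\cong A_T$ carries the nondegenerate form $-q_T$.
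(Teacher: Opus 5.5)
Your proposal follows the paper's route. You take the saturation $N=E_6^{\perp}$ of $S\oplus T$ and get $q_N\cong -q_{E_6}=3^{-1}$. You view the gluing group $K=N/(S\oplus T)$ as the graph of an anti-isometry $\gamma\colon K_S\to K_T$. From $|A_S||A_T|=3|K|^2$ you conclude that exactly one of $K_S=A_S$, $K_T=A_T$ holds. You also supply the orthogonal splitting $A_S=K_S\oplus K_S^{\perp}$ with $K_S^{\perp}\cong K^{\perp}/K\cong A_N$, which the paper leaves implicit. That step is correct, since $K^{\perp}=K\oplus(K_S^{\perp}\oplus 0)$ by an order count.

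\textbf{The primitivity argument for $T\oplus E_6$ fails as written.} For $h=(0,t,e)\in H$, subtracting $(\gamma^{-1}t,t,0)\in K$ gives $(-\gamma^{-1}t,0,e)$, which is not in $A_{E_6}$. Elements of $K$ cannot clear the $A_T$-component without creating an $A_S$-component. There are two correct replacements.
\begin{itemize}
\item Use isotropy of $H$. Since $K\subset H$ and $H$ is isotropic, $h\perp K$, so $b_T(t,\gamma x)=0$ for all $x\in K_S$. As $\gamma(K_S)=A_T$ and $b_T$ is nondegenerate, $t=0$. Hence $h\in H\cap A_{E_6}=0$.
\item Count orders. $S^{\perp}$ in $\mathrm{II}_{26,2}$ is the saturation of $T\oplus E_6$, and its discriminant group has order $|A_S|$. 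So $T\oplus E_6$ is primitive if and only if $|A_S|=3|A_T|$, i.e.\ if and only if $K_T=A_T$.
\end{itemize}
The second argument also supplies the converse hidden in ``in this case'': primitivity of $T\oplus E_6$ forces $K_T=A_T$, so the formula for $q_S$ applies. You did not address that direction.

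\textbf{A minor point in Step 2.} Injectivity of $H\to A_S\oplus A_T$ does not by itself force $p$ to be surjective. What forces it is $H=H^{\perp}$ combined with $H\cap A_{E_6}=0$. That remark is redundant for your main line anyway.
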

\begin{proof}
Let $\Lambda=\widehat{S\oplus T}$ be the orthogonal complement of $E_6$ in $\mathrm{II}_{26,2}$. From $q_{E_6}\cong 3^{+1}$ and \cite[Proposition 1.5.1]{Nikulin_1980} we know $q_\Lambda\cong 3^{-1}$. Again from \cite[Proposition 1.5.1]{Nikulin_1980} there is an isotropic subgroup $H$ of $A_S\oplus A_T$, who projects into both $A_S$ and $A_T$, such that $q_{\Lambda}\cong (q_S\oplus q_T)|_{H^\perp}/H$. Thus $|H^\perp|=3|H|$ and $|A_S\oplus A_T|=3|H|^2$. Since $A_S$ and $A_T$ has anti-isometric subgroups isomorphic to $H$, it must be one of cases stated in the proposition.
\end{proof}

Conversely, for each $G$ in the classification given by \cite[Theorem 1.2]{laza2022automorphisms},  signatures of $S,T$ and $q_S$ are determined, and $q_T$ can be found in \cite[Table 1]{hohn2016290}. However, there are possibly different $T$'s, or equivalently different $q_T$'s corresponding to fixed $(G,S)$. As a corollary of \cite[Theorem 4.5]{laza2022automorphisms}, all possibilities of $q_T$ as well as $T$ can be described only by lattice information:

\begin{lem}\label{lem:T_realize}
    For fixed $(G,S)$, lattice $T$ of signature $(20-\rank(S),2)$ can be realized if and only if there is an embedding $S\oplus T\oplus E_6\hookrightarrow\mathrm{II}_{26,2}$.
\end{lem}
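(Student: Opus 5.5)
We prove the two implications separately. The forward direction is essentially lattice bookkeeping, while the converse rests on \cite[Theorem 4.5]{laza2022automorphisms}.

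\emph{Necessity.} Suppose $T$ is realized, i.e.\ there is a smooth cubic fourfold $X$ with $G=\Aut^s(X)$, coinvariant lattice $S_G(X)\cong S$ and $\Lambda_0^G\cong T$. Then $S\oplus T\hookrightarrow\Lambda_0$ with both summands primitive. Composing with the standard embedding $\Lambda_0\oplus E_6\hookrightarrow \mathrm{II}_{26,2}$, where $\Lambda_0=(E_6)^\perp$, gives the embedding \eqref{eq:E6}. This direction therefore follows directly from the setup.

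\emph{Sufficiency.} Suppose we have an embedding $S\oplus T\oplus E_6\hookrightarrow\mathrm{II}_{26,2}$. Following the setup of Proposition \ref{prop:either_ST_oplus_E6_primitive}, we may take the summands to be primitive; otherwise we replace $S,T$ by their saturations. This replacement needs a check that it does not change the isomorphism class of $S$ and $T$. If that fails, we instead read the hypothesis as a primitive-summand embedding.

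The plan is as follows.
\begin{enumerate}
\item Set $\Lambda':=(E_6)^\perp_{\mathrm{II}_{26,2}}$. This is an even lattice of signature $(20,2)$ with $q_{\Lambda'}\cong 3^{-1}$. By Nikulin's uniqueness for indefinite even lattices with small length (\cite[Theorem 1.14.2]{Nikulin_1980}), $\Lambda'\cong\Lambda_0$. Since the embedding of $E_6$ into $\mathrm{II}_{26,2}$ is unique up to isometry, we may identify $\Lambda'$ with $\Lambda_0=(h^2)^\perp\subset\Lambda$, and this gives $S\oplus T\hookrightarrow\Lambda_0$ with $S=T^\perp$.
\item Extend the $G$-action. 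The pair $(G,S)$ is a Leech pair, so $G$ acts trivially on $A_S$. Hence the action $G\times\{\id_T\}$ on $S\oplus T$ glues along the isotropic gluing subgroup and extends to $\Lambda_0$. It also extends to $\Lambda$ fixing $h^2$, because $G$ acts trivially on the discriminant part of $\Lambda_0$, which comes from $A_T$.
\item Choose a generic period. Take a period point $\omega\in T_\CC$ with $\omega^2=0$ and $\omega\bar\omega<0$, avoiding the countably many hyperplanes $v^\perp$ for $v\in\Lambda_0$ outside $T\oplus S$. For such $\omega$ we get $\mathrm{NS}$-type $(\Lambda_0\cap\omega^\perp\cap\bar\omega^\perp)=S$.
\item Check that the period is not excluded. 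By Laza and Looijenga's description of the image of the period map, we need that $S$ contains no short roots ($v^2=2$) and no long roots ($v^2=6$, $v\cdot\Lambda_0\subset 3\ZZ$). $S$ is rootless by the Leech pair condition. The absence of long roots in $S$ is exactly the content checked in \cite[Theorem 4.5]{laza2022automorphisms} for the given $(G,S)$.
\item Conclude. The surjectivity of the period map then gives a smooth cubic fourfold $X$. By Theorem \ref{theorem: global torelli}, $G$ acts on $X$ symplectically. Genericity makes $\Aut^s(X)$ exactly the group of isometries of $S$ acting trivially on $A_S$, which is $G$ when the pair is saturated, as in the classification. Hence $S_G(X)=S$ and $\Lambda_0^G=T$.
\end{enumerate}

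The main obstacle is step 4 together with the saturation issue in step 5: we must ensure that $\Aut^s(X)$ is exactly $G$ and not larger. I would handle this by quoting \cite[Theorem 4.5]{laza2022automorphisms} directly, which asserts that for each classified $(G,S)$ any admissible $T$ is realized by a cubic with $\Aut^s(X)=G$. The lemma then reduces to the observation that admissibility of $T$ there is phrased exactly as the existence of the embedding \eqref{eq:E6}.
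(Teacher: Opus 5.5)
Your proposal follows the paper's route: the paper gives no separate argument and presents the lemma as a direct corollary of \cite[Theorem 4.5]{laza2022automorphisms}, with necessity being just the embedding \eqref{eq:E6}, and you are right to read the hypothesis with all three summands primitive (already for $G=1$, a proper finite-index sublattice of $\Lambda_0$ embeds non-primitively but is not realized). One caution about your step 4: having no long roots is a property of the embedding $S\subset\Lambda_0$, not of $(G,S)$ alone --- it is automatic when $S\oplus E_6$ is primitive in $\mathrm{II}_{26,2}$ (a long root $v$ would give $v/3+e\in\mathrm{II}_{26,2}$ with $e\in E_6^\vee\setminus E_6$), but when only $T\oplus E_6$ is primitive it amounts to the glue cosets in $S^\vee$ containing no vector of norm $2/3$, and this check is genuinely part of what is being imported from the cited theorem.
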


Therefore using Proposition \ref{prop:either_ST_oplus_E6_primitive}, Lemma \ref{lem:T_realize} and \cite[Theorem 1.10.1]{Nikulin_1980} we can explicitly compute all posibilities of $(q_S,q_T)$.

\subsection{Leech Subpairs}
We have some other propositions comparing two saturated Leech pairs $(G_{1},S_{1})$ and $(G_{2},S_{2})$ that appear in the classification of symplectic automorphism groups of cubic fourfolds \cite{laza2022automorphisms}.

By $(G_{1},S_{1})\leq (G_{2},S_{2})$, we mean that $G_{1}<G_{2}$ and $S_{1}\subset S_{2}$. Now $S_{1}$ and $S_{2}$ are coinvariant lattices in $\Lambda\cong H^{4}(X,\ZZ)$ with orthogonal complements $T_{1}$ and $T_{2}$ being invariant lattices. 

\begin{lem}
\label{Lemma:normalize_latticecriterion}
    For $g\in O(\Lambda_{0})$ and stabilize $S_{2}$, the following are equivalent:
    \begin{itemize}
        \item $g$ normalizes $G_{1}$;
        \item $g(T_{1})g^{-1}=T_{1}$;
        \item $g(S_{1})=S_{1}$.
    \end{itemize}
\end{lem}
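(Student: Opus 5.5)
The plan is to prove the cycle of implications: first (1)$\Leftrightarrow$(2), then (2)$\Leftrightarrow$(3). The background facts are as follows. Here $G_1<G_2$ act on $\Lambda_0$ with coinvariant lattices $S_1\subset S_2$ and invariant lattices $T_1=S_1^\perp\supset T_2=S_2^\perp$. The notation $g(T_1)g^{-1}=T_1$ will be read simply as $g(T_1)=T_1$. Since $S_i$ is primitive in $\Lambda_0$ and $T_i=(S_i)^\perp_{\Lambda_0}$, $S_i=(T_i)^\perp_{\Lambda_0}$, an isometry $g\in O(\Lambda_0)$ preserves $S_1$ if and only if it preserves $T_1$. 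This gives (2)$\Leftrightarrow$(3) immediately.

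For (1)$\Rightarrow$(2), assume $gG_1g^{-1}=G_1$. The invariant lattice of $gG_1g^{-1}$ is $g(\Lambda_0^{G_1})$, since $x\in\Lambda_0^{G_1}$ gives $ghg^{-1}(gx)=gx$ for all $h\in G_1$. Hence $g(T_1)=\Lambda_0^{gG_1g^{-1}}=T_1$.

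For (2)$\Rightarrow$(1), the key input is saturation: $G_1$ should be exactly the subgroup of $O(\Lambda_0)$ acting trivially on $T_1$. Equivalently, $G_1$ is the group of isometries of $S_1$ acting trivially on $A_{S_1}$, extended by the identity on $T_1$ (gluing along the identity on the discriminant).
\begin{itemize}
\item The group $G_1$ is contained in this subgroup because $G_1$ fixes $T_1$ pointwise.
\item Conversely, an isometry of $\Lambda_0$ that is trivial on $T_1$ preserves $S_1$. It acts trivially on $A_{S_1}$, since $A_{S_1}$ is glued to $A_{T_1}$ through the overlattice. Here one must take care that $S_1\oplus T_1\subset\Lambda_0$ need not glue the whole of $A_{S_1}$, because $\Lambda_0$ has discriminant $3^{-1}$. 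Proposition~\ref{prop:either_ST_oplus_E6_primitive} handles this: the extra factor $3$ either lies in $A_{T}$ or is realised by the $E_6$ gluing in $\mathrm{II}_{26,2}$. It is cleanest to pass to $\mathrm{II}_{26,2}$ and extend $g$ by the identity on $E_6$, so that the relevant gluing is unimodular.
\item By saturation of the Leech pair $(G_1,S_1)$ (Laza--Zheng's classification uses saturated pairs, with $G_1$ the full symplectic group), such an isometry lies in $G_1$.
\end{itemize}
Given this characterization, if $g(T_1)=T_1$ then for each $h\in G_1$ the element $ghg^{-1}$ acts trivially on $g(T_1)=T_1$, so $ghg^{-1}\in G_1$.

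The hypothesis that $g$ stabilizes $S_2$ enters here. It guarantees that $g$ preserves $T_2$, so $g$ (extended by the identity on $E_6$) is compatible with the ambient structure, and in particular preserves $h^2$ if we work in $\Lambda$. Then $ghg^{-1}$ is a genuine isometry of $S_1$ inside $S_2$, and the saturation statement for $(G_1,S_1)$ as a subpair of $(G_2,S_2)$ applies.

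The main obstacle is the discriminant-group bookkeeping in this last step. One has to confirm that an element of $O(\Lambda_0)$ fixing $T_1$ pointwise acts trivially on $A_{S_1}$, and that "trivial on $A_{S_1}$" is exactly the condition defining the saturated group $G_1$. I would do this by passing to the unimodular lattice $\mathrm{II}_{26,2}$ via \eqref{eq:E6} and using case (1) or (2) of Proposition~\ref{prop:either_ST_oplus_E6_primitive} as appropriate.
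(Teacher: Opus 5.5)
Your implications (1)$\Rightarrow$(2) and (2)$\Leftrightarrow$(3) are fine. Your plan for (2)$\Rightarrow$(1), showing $ghg^{-1}\in G_1$ by saturation, is also the paper's. The gap is the characterization you route that step through.

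\textbf{The characterization is false.} It is not true that every element of $O(\Lambda_0)$ acting trivially on $T_1$ acts trivially on $A_{S_1}$. So $G_1$ is in general \emph{not} the pointwise stabilizer of $T_1$ in $O(\Lambda_0)$. In case (1) of Proposition~\ref{prop:either_ST_oplus_E6_primitive}, $A_{S_1}$ contains an extra $\ZZ/3$ that is not glued to $A_{T_1}$, and an isometry trivial on $T_1$ can act by $-1$ on it.

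Here is a concrete counterexample. Take any family of generic index $2$, such as $N_{72}$ or the first $S_5$ family, which is exactly where the lemma is applied. Proposition-Definition~\ref{prop-def:generic2} gives $\phi\in O(\Lambda)$ fixing $h^2$ with $\phi|_{T_1}=-\id$. Then $\psi=-\phi|_{\Lambda_0}$ is trivial on $T_1$. However, $\psi$ acts by $-1$ on $A_{\Lambda_0}\cong\ZZ/3$, while every element of $G_1$ fixes $h^2$ and so acts trivially there. Hence $\psi\notin G_1$, and by saturation $\psi$ must act nontrivially on $A_{S_1}$.

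\textbf{Your proposed repair does not close this.} Stabilizing $S_2$ does not make $g$ extend by $\id_{E_6}$ to $\mathrm{II}_{26,2}$, nor make it fix $h^2$. For example, $g=-\id_{\Lambda_0}$ stabilizes $S_2$ but acts by $-1$ on $A_{\Lambda_0}$. So the ``main obstacle'' you single out is a false statement, not a bookkeeping step.

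\textbf{How to salvage it.} You only need the characterization for $ghg^{-1}$, and this element acts trivially on $A_{\Lambda_0}$ because $h$ does. Restricted to elements trivial on $A_{\Lambda_0}$, your $\mathrm{II}_{26,2}$ argument works: extend by $\id_{E_6}$, use that $S_1^\perp$ is the primitive closure of $T_1\oplus E_6$, and glue unimodularly.

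\textbf{The paper's shorter route.} The paper avoids the detour entirely. Since $g(S_1)=S_1$, $g$ induces an automorphism $g_*$ of $A_{S_1}$. Then $(ghg^{-1})_*=g_*h_*g_*^{-1}=\id$, because $h$ acts trivially on $A_{S_1}$ (Leech pair). Saturation gives $ghg^{-1}|_{S_1}=h'|_{S_1}$ for some $h'\in G_1$. Both maps are trivial on $T_1$, so $ghg^{-1}=h'$. No discriminant analysis of $\Lambda_0$ is needed, and the hypothesis on $S_2$ plays no role.
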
 

\begin{proof}
    The first term implies the second one and the equivalence between the second and the third term are clear.
    Take $h\in G_{1}$, and $g$ that satisfy $g(S_{1})=S_{1}$. Then $g$ has a well-defined action on $A_{S_{1}}$ and thus $ghg^{-1}$ acts trivially on $A_{S_{1}}$. Then $ghg^{-1}\in G_{1}$ follows from the definition of being saturated.
\end{proof}

\begin{cor}
    Suppose $\rank(S_{2})=\rank(S_{1})+1$, then $N_{G_{2}}(G_{1})/G_{1}=1\ \text{or} \ C_{2}$.
\end{cor}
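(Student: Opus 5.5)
We want to show that if $\rank(S_2)=\rank(S_1)+1$, then $N_{G_2}(G_1)/G_1$ is trivial or $C_2$. The plan is to realize $N_{G_2}(G_1)/G_1$ as a group acting faithfully on a rank one lattice. Such a group has automorphism group $\{\pm 1\}$, so this gives the bound.

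\textbf{Step 1: the normalizer acts on $W$.} Work inside $\Lambda_0$ and set
\[
W:=S_1^\perp\cap S_2=T_1\cap S_2 .
\]
This is a primitive sublattice of $S_2$. Its rank is $\rank(S_2)-\rank(S_1)=1$, because $S_1\subset S_2$ is primitive and $S_2$ is nondegenerate (positive definite). Let $g\in N_{G_2}(G_1)$. Then $g\in G_2$ stabilizes $S_2$, and by Lemma \ref{Lemma:normalize_latticecriterion} it satisfies $g(S_1)=S_1$. Since $g$ is an isometry, it also preserves $S_1^\perp$, and hence preserves $W$. This gives a homomorphism
\[
\phi\colon N_{G_2}(G_1)\to O(W)\cong\{\pm1\}.
\]

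\textbf{Step 2: the kernel of $\phi$ is $G_1$.} The group $G_1$ lies in the kernel, since $G_1$ fixes $T_1\supset W$ pointwise. For the converse, take $g$ in the kernel. Then:
\begin{itemize}
\item $g$ fixes $W$ pointwise;
\item $g\in G_2$, so $g$ fixes $T_2$ pointwise;
\item $g$ preserves $S_1$.
\end{itemize}
Since $T_2\oplus W$ has finite index in $T_1$ (it is the part of $T_1$ outside $S_1$, of full rank), $g$ acts trivially on $T_1$ after tensoring with $\QQ$, hence on $T_1$ itself.

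It remains to deduce $g\in G_1$. There are two routes.
\begin{itemize}
\item \emph{Saturation route.} As an element of $G_2$, $g$ acts trivially on $A_{S_2}$. If $g$ acts trivially on $T_1$, then by gluing $S_1\oplus T_1\subset\Lambda_0$ (or inside $\mathrm{II}_{26,2}$ with $E_6$, as in \eqref{eq:E6}), $g$ acts trivially on the image of $A_{S_1}$ in the glue group. One then needs to argue that $g|_{S_1}$ acts trivially on all of $A_{S_1}$; saturatedness of $(G_1,S_1)$ then gives $g\in G_1$.
\item \emph{Lemma route.} Alternatively, apply the defining property of $G_1$ as the full group of isometries of $\Lambda_0$ fixing $T_1$ pointwise, which is how saturated subpairs arise in the classification of \cite{laza2022automorphisms}. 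Then the fact that $g$ fixes $T_1$ directly gives $g\in G_1$.
\end{itemize}

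\textbf{Conclusion and main obstacle.} Step 2 yields an injection $N_{G_2}(G_1)/G_1\hookrightarrow\{\pm1\}$, which proves the corollary. The main obstacle is the last part of Step 2: making precise that an element of $G_2$ fixing $T_1$ pointwise lies in $G_1$. I would first try the second route, by checking that $G_1$ is exactly the pointwise stabilizer of $T_1$ in $O(\Lambda_0)$ intersected with $G_2$. I would fall back on the discriminant-group gluing argument if that characterization is not available from the setup.
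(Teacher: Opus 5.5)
Your proposal is the paper's proof: $N_{G_2}(G_1)$ acts on the rank-one lattice $S_1^\perp\cap S_2$ by $\pm1$, and an element acting trivially there fixes $T_1$ pointwise, so it lies in $G_1$ by saturation. The step you flag closes at once if you glue in the unimodular lattice $\Lambda$ rather than in $\Lambda_0$. Since $g\in G_2$ also fixes $h^2$, it fixes the finite-index sublattice $\langle h^2\rangle\oplus T_1$ of $(S_1)^\perp_\Lambda$, hence all of $(S_1)^\perp_\Lambda$, hence acts trivially on $A_{S_1}\cong A_{(S_1)^\perp_\Lambda}$, and saturation gives $g\in G_1$. Note that the pointwise stabilizer of $T_1$ in $O(\Lambda_0)$ alone can be strictly larger than $G_1$: it contains $-\phi|_{\Lambda_0}$ for a non-symplectic involution $\phi$ acting by $-1$ on $T_1$. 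So the intersection with $G_2$ (equivalently, fixing $h^2$) in your ``lemma route'' is essential.
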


\begin{proof}
    Let $g\in G_{2}$ with $gG_{1}g^{-1}=G_{1}$. Then $g$ acts on $S_{1}$ and $T_{1}$. Then $g$ has a well-defined action on $S_{1}^{\perp}$, the orthogonal complement of $S_{1}$ in $S_{2}$. Now $S_{1}^{\perp}$ is isomorphic to $\ZZ$ as an abelian group. Therefore, $g$ acts on $S_{1}^{\perp}$ by $\pm 1$. If $g$ acts on $S_{1}^{\perp}$ trivially, $g$ acts trivially on $T_{1}$. Then $g\in G_{1}$ by the definition of saturated. So, there is an exact sequence as follows:

    \begin{equation*}
        1\longrightarrow G_{1}\longrightarrow N_{G_{2}}(G_{1})\longrightarrow C_{2}.
    \end{equation*}
and the corollary follows.
\end{proof}

\section{Restrictions and Existence of Indices}\label{sect:indice}

Let us use results established in \S \ref{sect:latt} to give some general restrictions and existence of indices. A first strong restriction is

\begin{prop}\label{prop:2k3l}
    If $m$ is admissible for some $G$, then $m=2^k\cdot 3^l$ where $k\geq 0,l\in\{0,1\}$ are integers.
\end{prop}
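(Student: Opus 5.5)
The plan is to reduce the statement to individual automorphisms of prime-power order and then compute the character on $H^{3,1}$ for each. Recall from \S\ref{subsect:moduli-dim} that $\Aut(X)/\Aut^s(X)\cong\mu_m$ via the character $\chi$ on $H^{3,1}(X,\CC)$. Suppose $p^a\mid m$ for a prime $p$. Then there is $g\in\Aut(X)$ whose image in $\mu_m$ has order exactly $p^a$. Replacing $g$ by $g^{r}$, where $r$ is the prime-to-$p$ part of $\mathrm{ord}(g)$, we may assume $g$ itself has order a power of $p$ and that $\chi(g)$ is a primitive $p^a$-th root of unity. So it suffices to show two things: first, every automorphism of $p$-power order with $p\ge 5$ is symplectic; second, no automorphism of $3$-power order has $\chi(g)$ of order $9$.

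\textbf{Step 1 (formula for $\chi$).} By Griffiths' residue description, $H^{3,1}(X)$ is spanned by $\mathrm{Res}\,\Omega/F^2$, where $\Omega=\sum(-1)^i x_i\,dx_0\wedge\cdots\widehat{dx_i}\cdots\wedge dx_5$. Take a lift $\tilde g\in\GL(6,\CC)$ of $g$ with $\tilde g\cdot F=\lambda F$. Then
\[
\chi(g)=\det(\tilde g)\,\lambda^{-2},
\]
and this is independent of the lift, since rescaling $\tilde g$ by $c$ multiplies $\det$ by $c^6$ and $\lambda^2$ by $c^6$. Since $g$ has finite order, $\tilde g$ can be chosen diagonal, $\tilde g=\frac1{n}(a_0,\dots,a_5)$, and then $\chi(g)$ is read off directly from the exponents.

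\textbf{Step 2 (excluding $p\ge5$ and $9$).} I will use the known classification of prime-order, and more generally prime-power-order, automorphisms of smooth cubic fourfolds (González-Aguilera–Liendo, as refined in \cite{laza2022automorphisms} and \cite{yang2024automorphism}). Smoothness forces every variable $x_i$ to appear in a monomial $x_i^2x_j$, and this severely limits which diagonal actions are possible. It gives the following.
\begin{enumerate}
\item For $p\ge5$, the only possible orders are $5,7,11$ (and no higher powers of them). The possible diagonal actions form an explicit finite list, for example $\frac1{11}(0,1,-2,4,-8,5)$, $\frac17(0,1,2,4,0,0)$, and analogous cases for $5$. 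For each entry, Step 1 gives $\chi(g)=1$.
\item For $p=3$, I list the actions of order $9$ (and $27$), for example $\frac19(0,1,-2,4,\dots)$ and $\frac19(1,\dots)$, and check that $\chi(g)^3=1$ in every case.
\end{enumerate}

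As a cross-check of Step 2 independent of the classification, I will use the lattice argument. Here $g$ normalizes $G=\Aut^s(X)$, so it preserves $T$, and $\chi(g)$ occurs as an eigenvalue of $g$ on $T_\CC$ together with all its Galois conjugates. Hence $\varphi(p^a)\le\rank T\le 22$, which already kills $p\ge 29$.

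\textbf{Main obstacle.} The main obstacle is the completeness of the case list in Step 2, especially for $p=3$, where order-$9$ and order-$27$ elements have many diagonal shapes. I would first try to organize it by the monomial condition ($x_i^2x_j$ present for each $i$). I would then use Proposition \ref{prop:either_ST_oplus_E6_primitive} as a backup: an element with $\chi(g)$ of order $9$ would act on $T$ with $\Phi_9$-part of rank divisible by $6$ and of signature $(\ast,2)$, and the induced action on $A_T$ (compatible with $q_S$ and the $3^{-1}$ summand) should yield a contradiction.
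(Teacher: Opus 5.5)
Your reduction to an element $g$ of $p$-power order with $\chi(g)$ of order $p^a$ is correct, and so is the residue formula $\chi(g)=\det(\tilde g)\lambda^{-2}$. The overall route is the paper's. Primes $p\ge 5$ are removed using the order list of \cite[Proposition 3.5]{Zheng2020OnAA} and the symplecticity of prime-order automorphisms \cite[Proposition 6.3]{laza2022automorphisms}. The case $9\mid m$ is excluded by a finite check that no order-$9$ automorphism has a non-symplectic cube; the paper runs this check from the other end, through the non-symplectic order-$3$ families $\mathcal{F}_3^1,\mathcal{F}_3^2,\mathcal{F}_3^5,\mathcal{F}_3^7$ of \cite{gonzalez2011automorphisms}.

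The gap is in how you propose to produce the decisive order-$9$ list. The condition ``each coordinate $x_i$ occurs in some monomial $x_i^2x_j$'' does not detect repeated eigenvalues, and it lets through candidates that break your check. For example, $\tilde g=\frac19(0,0,1,1,4,7)$ with $\lambda=1$ satisfies it, since $2\cdot1+7\equiv 2\cdot7+4\equiv 2\cdot4+1\equiv 0\pmod 9$, yet $\chi(g)=\zeta_9^{13}$ has order $9$. Your lattice ``backup'' is only a hope, so as written the exclusion of $9\mid m$ is not established. Separately, your sample $\frac17(0,1,2,4,0,0)$ preserves no smooth cubic. Up to scalars the only order-$7$ weight vector is $\frac17(1,2,3,4,5,6)$, which is symplectic, so this slip is harmless.

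The missing ingredient is the eigenspace form of the smoothness criterion. Let $V_w$ be the $\zeta_n^w$-eigenspace with $3w\not\equiv k$. Then every $[v]\in\PP(V_w)$ lies on $X$. Write the part of $F$ that is quadratic in the $V_w$-coordinates as $\sum_j x_jQ_j$, with $x_j$ running over coordinates of weight $k-2w$. The only partial derivatives of $F$ that can be nonzero at $v$ are the $Q_j(v)$. Fewer than $\dim V_w$ quadrics on $\PP(V_w)$ always have a common zero, so smoothness forces $\dim V_{k-2w}\ge\dim V_w$. In your example this fails, since $\dim V_1=2>1=\dim V_7$.

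With this criterion the order-$9$ case closes quickly:
\begin{itemize}
\item If $3\nmid k$, the map $w\mapsto k-2w$ has cube $w\mapsto w+3k$, so it is a single $9$-cycle on $\ZZ/9$.
\item The occurring weights must be stable under this map, because $x_i^3$ is excluded when $3w_i\not\equiv k$. A $9$-cycle of weights is impossible with six coordinates.
\item So $3\mid k$, and after rescaling $\tilde g$ you may take $k=0$.
\item The eigenspace inequality then forces the cycles $\{1,7,4\}$ and $\{2,5,8\}$ to occur with constant multiplicities, and all other weights lie in $\{0,3,6\}$.
\item Hence $\sum_i w_i\equiv 0\pmod 3$, and $\chi(g)^3=\zeta_3^{\sum_i w_i}=1$.
\end{itemize}
This gives a self-contained version of the paper's unspelled ``direct computation''.
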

\begin{proof}

    Choose $g\in\widetilde{G}$ such that $\chi(g)=\zeta_m$ is a generator of $\mu_m$, then $m|\ord(g)$. Using \cite[Proposition 3.5]{Zheng2020OnAA}, $\ord(g)$ is a factor of $21$, $30$, $32$, $33$, $36$ or $48$. Thus if there exist some prime $p\neq 2,3$ dividing $m$, $p^2$ cannot divide $\ord(g)$. Then $\chi\left(g^{\frac{\ord(g)}{p}}\right)=\zeta_m^{\frac{\ord(g)}{p}}\neq 1$, which implies $g^{\frac{\ord(g)}{p}}$, which is of order $p$, is non-symplectic. However, if a prime $p$ divides the order of some non-symplectic automorphism, then $p$ can only be $2$ and $3$, see \cite[Proposition 6.3]{laza2022automorphisms}. Therefore, $m=2^k\cdot 3^l$ where $k,l\geq 0$ are integers.

    Suppose $l\geq 2$. Again, since $\ord(g)$ is a factor of $21$, $30$, $32$, $33$, $36$, or $48$, $l$ must be $2$. Let $h\colon=g^{\frac{\ord(g)}{9}}$ which is of order $9$. Since $\chi(h^3)=\chi(g)^{\frac{\ord(g)}{3}}=\omega^{\frac{\ord(g)}{9}}\neq 1$, $h^3$ is not symplectic. \cite[Theorem 2.8]{gonzalez2011automorphisms} gave a total list of possible smooth cubic fourfolds together with an order-3 automorphism up to  linear changes. Since $h^3$ is non-symplectic, it must belong to one of the cases $\mathcal{F}_3^1, \mathcal{F}_3^2, \mathcal{F}_3^5$ and $\mathcal{F}_3^7$ in \cite[Theorem 2.8]{gonzalez2011automorphisms}. Through direct computation, there does not exist $h$ such that $h^3$ is of these listed types, leading to a contradiction.
\end{proof}

\subsection{Reducing Indices}
\begin{lem}\label{lem:dim_reduction}
    Suppose $(G,m,\rho_0,\xi)$ is admissible, realized by $X$ and character $\chi$, i.e., we have 
    \[
        1\to G\to \widetilde{G}(\coloneqq \Aut(X))\xrightarrow{\chi} \mu_m\to 1
    \]
    and $\rho:\widetilde{G}\hookrightarrow \PGL(6,\CC)$. Suppose that a prime number $p$ divides $m$ and $\frac{m}{p}\neq 1,3$. If we let $\widetilde{G}^{\prime}=\chi^{-1}(\mu_{\frac{m}{p}})\overset{\rho^{\prime}}{\hookrightarrow}\PGL(6,\CC)$, $\xi^{\prime}=\xi|_{\pi^{-1}(\rho^{\prime}(\widetilde{G}^{\prime}))}$ and $\chi^{\prime}=\chi|_{\widetilde{G}^{\prime}}:\widetilde{G}^{\prime}\twoheadrightarrow\mu_{\frac{m}{p}}$, then
    \[
        \dim \mathcal{F}_{\widetilde{G^{\prime}},\rho^{\prime},\xi^{\prime}}\geq 
        \begin{cases}2\dim \mathcal{F}_{\widetilde{G},\rho,\xi}, & \text{if }\frac{m}{p}=2\\
        2\dim \mathcal{F}_{\widetilde{G},\rho,\xi}+1. &\text{otherwise}
        \end{cases}
    \]
\end{lem}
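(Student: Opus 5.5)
The plan is to compute both dimensions as dimensions of eigenspaces in $\Lambda_{0,\CC}$, using Proposition \ref{proposition: moduli_dimension} together with the description of the period domains $\DD_{(G,\rho,\xi)}$ from \S\ref{subsect:moduli-dim}. Then we compare the two eigenspaces using the fact that Galois-conjugate characters of a cyclic group acting on a rational lattice have eigenspaces of equal dimension. Both moduli spaces are nonempty, since $X$ itself lies in each. Set $T=\Lambda_0^{G}$. Every element of $G=\ker\chi$ has $\chi=1$, so the $\chi$-eigenspace $(\Lambda_{0,\CC})^\chi$ lies in $T_\CC$, and likewise for $\chi'$. The actions of $\widetilde G$ and $\widetilde G'$ on $T$ factor through $\mu_m$ and $\mu_{m/p}$, so everything reduces to the cyclic group $\mu_m=\widetilde G/G$ acting on the rational lattice $T$.

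First, $\chi$ is non-real. Since $p\mid m$ and $m/p\neq 1$, we have $m\geq 4$, so $\chi(\widetilde G)=\mu_m\not\subset\{\pm1\}$. Hence, by \S\ref{subsect:moduli-dim}, $\dim\mathcal F_{\widetilde G,\rho,\xi}=d-1$, where $d=\dim (T_\CC)^{\chi}$. Next, the $\chi'$-eigenspace for $\widetilde G'$ is the sum of the $\psi$-eigenspaces of $\mu_m$ on $T_\CC$, over all characters $\psi$ of $\mu_m$ whose restriction to $\mu_{m/p}$ equals $\chi|_{\mu_{m/p}}$. These are exactly $\psi=\chi^{1+jm/p}$ for $j=0,\dots,p-1$.

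Because $T$ is defined over $\QQ$, the eigenspace of $\chi^k$ has dimension $d$ whenever $\gcd(k,m)=1$. So the key count is the number of $j$ with $\gcd(1+jm/p,m)=1$. Any prime dividing $m/p$ does not divide $1+jm/p$. If $p\mid m/p$, every $j$ works, giving $p\geq 2$ values. If $p\nmid m/p$, then exactly $p-1$ values of $j$ work. This count is at least $2$ unless $p=2$ and $m/p$ is odd. In that last case, by Proposition \ref{prop:2k3l} we would have $m/p\in\{1,3\}$, which is excluded by hypothesis. Hence $\dim (T_\CC)^{\chi'}\geq 2d$.

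Finally we split by whether $\chi'$ is real. If $m/p=2$, then $\chi'$ is real, and $\dim\mathcal F_{\widetilde G',\rho',\xi'}=\dim(T_\CC)^{\chi'}-2\geq 2d-2=2\dim\mathcal F_{\widetilde G,\rho,\xi}$. Otherwise $m/p\geq 4$ (the values $1$ and $3$ are excluded), so $\chi'$ is non-real. Then $\dim\mathcal F_{\widetilde G',\rho',\xi'}=\dim(T_\CC)^{\chi'}-1\geq 2d-1=2\dim\mathcal F_{\widetilde G,\rho,\xi}+1$.

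The main point needing care is the Galois-conjugate counting, and in particular the use of Proposition \ref{prop:2k3l} to rule out the case where only one conjugate survives. A secondary point to check is that the eigenspace for $\widetilde G'$ in $\Lambda_{0,\CC}$ really is the direct sum of the $\mu_m$-eigenspaces listed above. This holds because the $\mu_m$-action on $T_\CC$ is diagonalizable.
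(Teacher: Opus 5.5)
Your proposal is correct and follows the paper's argument. Both reduce to the cyclic $\mu_m$-action on $T$ and note that the $\chi'$-eigenspace contains, besides the $\chi$-eigenspace, the eigenspace of a Galois conjugate $\chi^{1+jm/p}$ with $j\neq 0$; that eigenspace has the same dimension and meets the $\chi$-eigenspace trivially, after which one applies the ball/type IV dimension count from Proposition~\ref{proposition: moduli_dimension}. Your explicit $\gcd$ count, using Proposition~\ref{prop:2k3l} to exclude the case $p=2$ with $m/p$ odd, simply makes precise the paper's ``direct check for $p=2,3$''.
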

\begin{proof}
    The key is to show that
    \begin{equation}\label{eq:2dim}
        \dim (\Lambda_{0,\CC})^{\chi^{\prime}}\geq 2\dim(\Lambda_{0,\CC})^{\chi}.
    \end{equation}
    Since the $\widetilde{G}$-action on $T\otimes \CC$ factors through $\mu_m$, denote the $\mu_m$-action by $\varphi$. We have
    \[
        \{x\in\Lambda_{0,\CC}:\varphi(\zeta_m)x=\zeta_p^r\zeta_m x\}\subset(\Lambda_{0,\CC})^{\chi^{\prime}}
    \]
    where $0\leq r\leq p-1$. This inclusion and those dimensions are preserved if we replace $\CC$ by $\QQ[\zeta_m]$. One could directly check in our cases for $p=2,3$ separately that there is at least one $1\leq r_0\leq p-1$ such that $\zeta_p^{r_0}\zeta_m$ is a primitive $m$-th root of unity, or equivalently $\zeta_m\mapsto \zeta_p^{r_0}\zeta_m$ is in $\mathrm{Gal}(\QQ[\zeta_m]/\QQ)$. Applying this Galois transformation we deduce
    \[
        \dim \{x\in\Lambda_{0,\CC}:\varphi(\zeta_m)x=\zeta_p^{r_0}\zeta_m x\}=\dim\{x\in\Lambda_{0,\CC}:\varphi(\zeta_m)x=\zeta_m x\}=\dim (\Lambda_{0,\CC})^\chi
    \]
    since all $\rho_0$-actions are initially from the action on the lattice $\Lambda_0$, which must be $\mathrm{Gal}(\QQ[\zeta_m]/\QQ)$-invariant. Since the intersection of above two eigenspaces is $0$, we have proved (\ref{eq:2dim}).

    Using Proposition \ref{proposition: moduli_dimension}, since $\chi$ is always non-real, and $\chi^{\prime}$ is real iff $\frac{m}{p}=2$, we have
    \[
    \dim \mathcal{F}_{\widetilde{G},\rho,\xi}=\dim (\Lambda_{0,\CC})^{\chi}-1,
    \]
    and
    \[
    \dim \mathcal{F}_{\widetilde{G}^{\prime},\rho^{\prime},\xi^{\prime}}=
        \begin{cases}
            \dim (\Lambda_{0,\CC})^{\chi^{\prime}}-2, &\text{if }\frac{m}{p}=2\\
            \dim (\Lambda_{0,\CC})^{\chi^{\prime}}-1. &\text{otherwise}
        \end{cases}
    \]
    Combining with \eqref{eq:2dim} we conclude the result.
\end{proof}

On the other hand, strict inclusion of moduli would give a reduction on indices in the following way

\begin{prop}\label{prop:ind_red}
    Suppose $(G,m,\rho_0,\xi)$ is admissible, realized by $X$ and character $\chi$, i.e., we have 
    \[
        1\to G\to \widetilde{G}(\coloneqq\Aut(X))\xrightarrow{\chi} \mu_m\to 1
    \]
    and $\rho_0:G\hookrightarrow \PGL(6,\CC) ,\rho:\widetilde{G}\hookrightarrow \PGL(6,\CC)$. Suppose that a prime number $p$ divides $m$. Let $\widetilde{G}^{\prime}=\chi^{-1}(\mu_{\frac{m}{p}})\overset{\rho^{\prime}}{\hookrightarrow}\PGL(6,\CC)$, $\xi^{\prime}=\xi|_{\pi^{-1}(\rho^{\prime}(\widetilde{G}^{\prime}))}$ and $\chi^{\prime}=\chi|_{\widetilde{G}^{\prime}}:\widetilde{G}^{\prime}\twoheadrightarrow\mu_{\frac{m}{p}}$. If
    \[
    \dim\mathcal{F}_{\widetilde{G},\rho,\xi}<\dim\mathcal{F}_{\widetilde{G^{\prime}},\rho^{\prime},\xi^{\prime}},
    \]
    then $(G,\frac{m}{p},\rho_0,\xi^{\prime})$ is admissible.
\end{prop}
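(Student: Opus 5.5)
The plan is to use the dimension inequality to find a cubic fourfold in the larger moduli space $\mathcal{F}_{\widetilde{G}^{\prime},\rho^{\prime},\xi^{\prime}}$ that lies outside the image of the smaller space $\mathcal{F}_{\widetilde{G},\rho,\xi}$. We then check that such an $X'$ has $\Aut^s(X')=\rho_0(G)$ and $[\Aut(X'):\Aut^s(X')]=\frac{m}{p}$.

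Throughout, write $P'=P_{6,3}^{\mathcal{G}',\mathrm{sm}}$ for the space of cubic forms preserved by $\mathcal{G}'=\pi^{-1}(\rho'(\widetilde{G}'))$ with character $\xi'$. This space is irreducible and nonempty, since it contains the equation $F_X$ of $X$. It dominates $\mathcal{F}_{\widetilde{G}^{\prime},\rho^{\prime},\xi^{\prime}}$.

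\textbf{Step 1: removing the locus with extra automorphisms.}
Let $Z\subset P'$ be the set of $F$ with $\Aut(X_F)\supsetneq\rho'(\widetilde{G}')$. Each such $F$ satisfies one of two conditions.
\begin{enumerate}
\item Its symplectic group is strictly larger than $\rho_0(G)$. Then $\rank S_{\Aut^s(X_F)}(X_F)>\rank S$. By the correspondence with Leech pairs, finitely many lattice types occur, and by Proposition \ref{proposition: moduli_dimension} the corresponding period loci are proper closed subsets of $\DD_{(\widetilde{G}',\rho',\xi')}$.
\item Its symplectic group is still $\rho_0(G)$ but its non-symplectic index is strictly larger than $\frac{m}{p}$. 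Then $\Aut(X_F)$ contains an element $g$ whose character has order not dividing $\frac{m}{p}$. The group generated by $\widetilde{G}'$ and $g$ is a finite group $\mathcal{G}''$ strictly containing $\mathcal{G}'$. By Matsumura's theorem and the finiteness of finite subgroups of $\PGL_6$ of bounded order (orders are bounded by Proposition \ref{prop:2k3l} and the classification), there are finitely many such $\mathcal{G}''$ up to conjugacy.
\end{enumerate}
In both cases $F$ lies in $\Sigma_{\mathcal{G}''}\cap P'$ for one of finitely many groups $\mathcal{G}''\supsetneq\mathcal{G}'$. Each such set is closed, as shown in the proof of Proposition \ref{prop:GL_family}.

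\textbf{Step 2: the main obstacle — showing $Z$ is a proper subset of $P'$.}
This is where the hypothesis is needed. I would first try the following route. The image in $\mathcal{F}_{\widetilde{G}^{\prime},\rho^{\prime},\xi^{\prime}}$ of each component of $Z$ is a moduli space $\mathcal{F}_{\mathcal{G}'',\ldots}$ of cubics with a larger automorphism group, mapped finite-to-one into the larger moduli. So the question is whether such a component could have full dimension.

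If it did, Proposition \ref{prop:GL_family} shows that a generic element of $P'$ has automorphism group strictly containing $\mathcal{G}'$. Hence the generic member of the irreducible family has some fixed larger group $\mathcal{G}''$. This would contradict either genericity of the period (a larger symplectic group forces a smaller period domain) or the dimension hypothesis, as follows.

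\emph{Index case.} If the generic index were larger than $\frac{m}{p}$, the extra automorphism generates a group whose moduli has dimension $\dim(\Lambda_{0,\CC})^{\chi''}-1$. Here $\chi''$ is a non-real character whose eigenspace is strictly contained in the $\chi'$-eigenspace, unless $\chi''$ restricts to $\chi'$ with the same eigenspace. Comparing with $X$ itself, whose group $\widetilde{G}$ already realizes the minimal such extension, I expect the generic group to contain a conjugate of $\widetilde{G}$. That would give $\dim\mathcal{F}_{\widetilde{G}^{\prime},\rho^{\prime},\xi^{\prime}}\le\dim\mathcal{F}_{\widetilde{G},\rho,\xi}$, contradicting the assumption.

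Making this comparison rigorous is the main difficulty. Concretely, one must show that any eigenspace realizing a generic extension of $\chi'$ has dimension at most that of $(\Lambda_{0,\CC})^\chi$. I would do this by taking the $\chi'$-eigenspace (with $\chi'$ defined on $T\otimes\CC$) and noting that any extension of the $\mu_{m/p}$-action by an element $g$ must have eigenspace strictly smaller than the $\chi'$-eigenspace.

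\textbf{Step 3: conclusion.}
Take $F\in P'\setminus Z$ and let $X'=X_F$. Then $\Aut(X')=\rho'(\widetilde{G}')$, whose symplectic part is $\ker\chi'=\rho_0(G)$, and whose quotient by that part is $\mu_{m/p}$. Thus $X'$ has non-symplectic index $\frac{m}{p}$ with the same $\rho_0$ and scalar character $\xi'$, so $(G,\frac{m}{p},\rho_0,\xi')$ is admissible.
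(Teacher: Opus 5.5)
Your overall plan matches the paper's: take a generic $X'$ in $\mathcal{F}_{\widetilde{G}',\rho',\xi'}$ outside the image of $\mathcal{F}_{\widetilde{G},\rho,\xi}$, then check $\Aut^s(X')$ and $\Aut(X')$. Steps 1 and 3 are fine. Step 2, which you yourself leave open, is a genuine gap.

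The missing idea is upper semicontinuity of automorphism groups in the family, applied at the special member $X$. Since $\widetilde{G}'\subset\Aut(X)$ acts on the equation of $X$ through $\xi'$, $X$ is itself a point of $\mathcal{F}_{\widetilde{G}',\rho',\xi'}$. Specializing a generic member $X'$ to $X$ embeds $\Aut(X')$ into $\Aut(X)=\widetilde{G}$ and $\Aut^s(X')$ into $\Aut^s(X)=G$, compatibly with the given copies of $\widetilde{G}'$ and $G$. This gives $\Aut^s(X')=G$ at once, so your period-domain argument for case (1) is unnecessary. It also gives $\widetilde{G}'\subseteq\Aut(X')\subseteq\widetilde{G}$. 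Since $[\widetilde{G}:\widetilde{G}']=p$ is prime, $\Aut(X')$ is either $\widetilde{G}'$ or $\widetilde{G}$. The second option for generic $X'$ would put a dense subset of $\mathcal{F}_{\widetilde{G}',\rho',\xi'}$ into the image of $\mathcal{F}_{\widetilde{G},\rho,\xi}$, contradicting the dimension hypothesis. That is the whole of the paper's argument.

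The route you sketch instead would not close the gap, for two reasons.

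First, the containment you expect is backwards. Nothing forces the generic group to \emph{contain} a conjugate of $\widetilde{G}$; a priori it could be some other extension $\mathcal{G}''$ of $\widetilde{G}'$. What is true, and what you need, is that it is \emph{contained in} $\Aut(X)=\widetilde{G}$.

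Second, an eigenspace count cannot exclude such a $\mathcal{G}''$. Suppose $m/p=3$ and $\mathcal{G}''/G\cong\mu_6$ with generator $g$. Then the $\chi'$-eigenspace $E'$ is the sum of the $g$-eigenspaces for $\zeta_6$ and $\zeta_6^4$. These are not Galois conjugate, so no eigenspace count rules out $E''=E'$ (with $E''$ the $\chi''$-eigenspace), and hence $\dim\mathcal{F}_{\mathcal{G}''}=\dim\mathcal{F}_{\widetilde{G}',\rho',\xi'}$. This is exactly why Lemma \ref{lem:dim_reduction} has to exclude $m/p=3$. Moreover, when $\chi'$ is real but $\chi''$ is not, the dimension formula changes from $\dim E'-2$ to $\dim E''-1$. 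So a strict inclusion $E''\subsetneq E'$ only yields $\le$, not $<$.

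The dimension hypothesis concerns the specific group $\widetilde{G}$. It becomes usable only once you know the generic automorphism group lies between $\widetilde{G}'$ and $\widetilde{G}$, and that is precisely what specialization to $X$ provides.
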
 
\begin{proof}
Let $X^{\prime}$ be a generic cubic fourfold from $\mathcal{F}_{\widetilde{G^{\prime}},\rho^{\prime},\xi^{\prime}}$. Then $X'$ is not from $\mathcal{F}_{\widetilde{G},\rho,\xi}$. Firstly, any faithful action on $\PP^5$ cannot act trivially on any smooth cubic fourfold, thus $\widetilde{G}^{\prime}\subset\Aut(X^{\prime})$ with subgroup $G\subset \Aut^s(X')$. Since $\Aut^s(-)$ is upper-continuous on $\mathcal{F}_{\widetilde{G^{\prime}},\rho^{\prime},\xi^{\prime}}$ and $\Aut^s(X)=G$, we have $\Aut^s(X^{\prime})=G$. Similarly, since $\Aut(-)$ is upper-continuous on $\mathcal{F}_{\widetilde{G^{\prime}},\rho^{\prime},\xi^{\prime}}$, we have $\Aut(X^{\prime})\subset \widetilde{G}$. However, $X^{\prime}\notin \mathcal{F}_{\widetilde{G},\rho,\xi}$, we conclude that $\Aut(X^{\prime})=\widetilde{G}^{\prime}$ since $[\widetilde{G}:\widetilde{G}^{\prime}]=p$ is prime. Therefore, $(G,\frac{m}{p},\rho_0,\xi^{\prime})$ is admissible.
\end{proof}

Combining Lemma \ref{lem:dim_reduction} and Proposition \ref{prop:ind_red}, we deduce that

\begin{cor}\label{cor:reduce}
    Let $G$ be a group.
    \begin{enumerate}
        \item Let $k\geq 3$ be an integer. If $2^k$ is admissible for $G$, so are $2^{k-1},2^{k-2},\ldots,2$.
        \item Let $k\geq 2$ be an integer. If $2^k\cdot 3$ is admissible for $G$, so are $2^{k-1}\cdot 3,\ldots,6$ and $2^k,\ldots,2$.
        \item If $m=4$ or $6$ is admissible for $G$ and $\dim(\Lambda_{0,\CC})^\chi>1$ in some realization of $(G,m)$, then $2$ is admissible for $G$.
    \end{enumerate}
\end{cor}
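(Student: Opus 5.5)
The plan is to apply Proposition \ref{prop:ind_red} repeatedly, with the dimension inequality it needs supplied by Lemma \ref{lem:dim_reduction}, and then induct downward on the index. Fix an admissible $(G,m,\rho_0,\xi)$ realized by $X$ with $\widetilde G=\Aut(X)$ and character $\chi$, and a prime $p\mid m$ (here $p\in\{2,3\}$ by Proposition \ref{prop:2k3l}). The key observation is that $\chi$ is non-real whenever $m\ge 3$. Since $\mathcal{F}_{\widetilde G,\rho,\xi}$ is nonempty, $\dim(\Lambda_{0,\CC})^\chi\ge 1$. The ball dimension is $\dim(\Lambda_{0,\CC})^\chi-1\ge 0$, so $\dim\mathcal{F}_{\widetilde G,\rho,\xi}\ge 0$.

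Suppose $m/p\ne 1,3$. Lemma \ref{lem:dim_reduction} gives $\dim\mathcal{F}_{\widetilde G',\rho',\xi'}\ge 2\dim\mathcal{F}_{\widetilde G,\rho,\xi}+1$ when $m/p\neq 2$. The right side is strictly bigger than $\dim\mathcal{F}_{\widetilde G,\rho,\xi}$ because the latter is $\ge 0$. Hence Proposition \ref{prop:ind_red} yields that $(G,m/p)$ is admissible.

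When $m/p=2$, the lemma only gives $\ge 2\dim\mathcal{F}_{\widetilde G,\rho,\xi}$. This is strict exactly when $\dim\mathcal{F}_{\widetilde G,\rho,\xi}>0$, i.e. $\dim(\Lambda_{0,\CC})^\chi>1$. Combining the lemma and the proposition under this hypothesis immediately gives part (3), since $m=4$ or $6$ has $m/p=2$ for $p=2$ or $3$.

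For (1), start from $2^k$ with $k\ge 3$ and take $p=2$. Since $2^{k-1}\ge 4$, the quotient is not $1,2,3$ and the strict case applies. Iterating down to $4$ gives admissibility of $2^{k-1},\dots,4$. For the last step $4\to 2$ I need $\dim(\Lambda_{0,\CC})^\chi>1$ in a realization of index $4$. To get it, I use the realization of $4$ produced from $8$ via the lemma. There $\dim(\Lambda_{0,\CC})^{\chi'}\ge 2\dim(\Lambda_{0,\CC})^{\chi}\ge 2$, where $\chi'$ is the restricted character on the generic member, which is the relevant character for $X'$. Part (3) then gives $2$.

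For (2), start from $2^k\cdot 3$ with $k\ge 2$. Using $p=2$ repeatedly, the quotients $2^{j}\cdot 3$ are never $1,2,3$ while $j\ge 1$, so we descend to $6$. Taking $p=3$ from $2^k\cdot 3$ gives the quotient $2^k\ge 4$, which is allowed. This gives $2^k$, and then part (1)'s chain takes us down to $4$.

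The final step down to $2$ again uses the doubling of eigenspace dimension: a realization of $4$ obtained from $12$ (or from $8$) by Lemma \ref{lem:dim_reduction} has $\dim(\Lambda_{0,\CC})^{\chi'}\ge 2$. Then (3) applies. For $k=2$ we get $4$ from $12$ via $p=3$ in this way.

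The main obstacle is this bookkeeping. I must check that the eigenspace for the generic member $X'$ of $\mathcal{F}_{\widetilde G',\rho',\xi'}$, whose full automorphism group is exactly $\widetilde G'$ by the proof of Proposition \ref{prop:ind_red}, is the same $(\Lambda_{0,\CC})^{\chi'}$. This holds because lattice data with the group action are constant on the moduli. With that, the inequality \eqref{eq:2dim} transfers to the new realization.
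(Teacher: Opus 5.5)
Your proposal is correct and follows the paper's own argument. You iterate Proposition~\ref{prop:ind_red}, with the needed strict inequality supplied by Lemma~\ref{lem:dim_reduction}: it is automatic when $m/p\ge 4$, and it requires $\dim\mathcal{F}>0$ when $m/p=2$. Your observation that an index-$4$ realization obtained from index $8$ or $12$ lies in a moduli space of dimension $\ge 2\dim\mathcal{F}+1\ge 1$ is exactly the check the paper compresses into ``we can check in all of our cases'', and it correctly justifies the final $4\to 2$ step.
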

\begin{proof}
    In all three cases we only have to consider ``$m\Rightarrow\frac{m}{p}$''. Choosing some realization and using Lemma \ref{lem:dim_reduction}, we can check in all of our cases the condition in Proposition \ref{prop:ind_red} holds, which implies the conclusion.
\end{proof}

The following is a refined version of Coroallary \ref{cor:reduce}, and the proof is the same.

\begin{cor}\label{cor:reduce_refined}
    Assume $m=2^k$ or $2^k\cdot 3$ where $k\geq 1$, and $(G,m,\rho_0,\xi)$ is admissible, realized by $X$ and character $\chi$, i.e., we have
    \[
        1\to G\to \widetilde{G}(\coloneqq\Aut(X))\xrightarrow{\chi} \mu_m\to 1
    \]
    and $\rho:\widetilde{G}\hookrightarrow \PGL(6,\CC)$. Assume
    \[
        m^{\prime}\in\begin{cases}
        \{2^k,2^{k-1},\ldots,2\}, & \text{if}\;m= 2^{k+1},k\geq 2\\
        \{2^{k-1}\cdot 3,\ldots,6\}\cup \{2^{k},\ldots,2\}, & \text{if}\;m= 2^{k}\cdot 3,k\geq 2\\ 
        \{2\} & \text{if}\;m\in\{4,6\}\; \text{and}\;\dim(\Lambda_{0,\CC})^\chi>1 \\
    \end{cases}.
    \]
    Let $\widetilde{G}^{\prime}=\chi^{-1}(\mu_{m^{\prime}})\overset{\rho^{\prime}}{\hookrightarrow}\PGL(6,\CC)$, $\xi^{\prime}=\xi|_{\pi^{-1}(\rho^{\prime}(\widetilde{G}^{\prime}))}$ and $\chi^{\prime}=\chi|_{\widetilde{G}^{\prime}}:\widetilde{G}^{\prime}\twoheadrightarrow\mu_{m^{\prime}}$, then $(G,m^{\prime},\rho_0,\xi^{\prime})$ is admissible.
\end{cor}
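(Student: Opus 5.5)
The plan is to pass from $m$ to $m'$ in one step, without climbing down a chain of primes. We apply Proposition \ref{prop:ind_red} to the intermediate group $\widetilde{G}'=\chi^{-1}(\mu_{m'})$, using a dimension estimate in the spirit of Lemma \ref{lem:dim_reduction}. Proposition \ref{prop:ind_red} was stated only for $[\widetilde G:\widetilde G']=p$ prime, so the first step is to remove that restriction.

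\textbf{Step 1: extend Proposition \ref{prop:ind_red}.} Take a generic $X'$ in $\mathcal{F}_{\widetilde{G}',\rho',\xi'}$. By upper-continuity, $\Aut^s(X')=G$. Also $\Aut(X')$ lies between $\widetilde{G}'$ and a conjugate of $\widetilde{G}$, and $\Aut(X')/G$ is cyclic of order $m''$ with $m'\mid m''\mid m$. We must exclude every $m''$ strictly between $m'$ and $m$, not only $m''=m$. There are finitely many intermediate subgroups $\chi^{-1}(\mu_{m''})$, and each gives a closed sub-moduli of $\mathcal{F}_{\widetilde{G}',\rho',\xi'}$. So it suffices to show
\[
\dim \mathcal{F}_{\chi^{-1}(\mu_{m''})}<\dim \mathcal{F}_{\widetilde{G}',\rho',\xi'} \quad\text{for every } m'\mid m''\mid m,\ m''\neq m'.
\]
A generic $X'$ then avoids the union of these loci. (Upper-continuity, with the finiteness from Proposition \ref{prop:GL_family}, guarantees that only these subgroups can occur.) This gives $\Aut(X')=\widetilde{G}'$, so $(G,m',\rho_0,\xi')$ is admissible.

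\textbf{Step 2: the eigenspace inequality.} Write $\varphi$ for the $\mu_m$-action on $T_\CC$. The eigenspace $(\Lambda_{0,\CC})^{\chi|_{\chi^{-1}(\mu_{n})}}$ is the sum of the $\varphi(\zeta_m)$-eigenspaces with eigenvalue $\zeta_m\eta$, over $\eta\in\mu_{m/n}$. As in the proof of Lemma \ref{lem:dim_reduction}, the Galois group of $\QQ(\zeta_m)$ permutes the eigenspaces whose eigenvalue is a primitive $m$-th root of unity, and all of these have dimension $d:=\dim(\Lambda_{0,\CC})^\chi$. The number of $\eta\in\mu_{m/n}$ with $\zeta_m\eta$ primitive is the number of $j\in\{0,\dots,\tfrac{m}{n}-1\}$ with $\gcd(1+jn,m)=1$. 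For $m''\mid m$, call this count $c(m'')$.

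For $m=2^{k+1}$ and $m''$ a power of $2$ with $m''\geq 2$, we have $c(m'')=m/m''$. For $m=2^k\cdot 3$, the count can be computed directly from the residues modulo $2$ and $3$. Taking the sum over these primitive eigenvalues alone gives
\[
\dim(\Lambda_{0,\CC})^{\chi|_{\widetilde{G}'}}\geq c(m')\,d .
\]
For the intermediate groups, the $\chi$-part of the lower bound does not help, so we bound them from above instead. Combining this with $\dim\mathcal F=\dim(\text{eigenspace})-1$ (ball case) or $-2$ (when $m'=2$, the type IV case), as in Proposition \ref{proposition: moduli_dimension}, reduces Step 1 to elementary inequalities in $d\geq 1$.

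\textbf{Main obstacle.} The intermediate loci $\chi^{-1}(\mu_{m''})$ have eigenspaces that also contain imprimitive eigenvalues. A crude lower bound for $\widetilde{G}'$ therefore does not automatically beat them. The better route is to avoid direct comparison and induct along a chain $m=m_0\to m_1\to\cdots\to m'$, dividing by one prime at a time, applying Corollary \ref{cor:reduce} at each step. Each step needs the hypothesis of Lemma \ref{lem:dim_reduction}, namely $m_i/p\neq 1,3$, and this holds along the chains in the listed ranges. The catch is that after the first step the new realization has a different $\chi$. So I would check carefully that at each step the resulting $\chi'$-eigenspace still has dimension large enough, in particular $>1$ when a step lands at $4$ or $6\to 2$. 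This is exactly the third case of the statement, and the induction has to propagate the condition $\dim>1$ through the chain: the doubling from \eqref{eq:2dim} gives dimension $\geq 2$ after any step, which handles it.
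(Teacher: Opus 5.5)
Your final route is exactly the paper's argument: descend from $m$ to $m'$ one prime at a time, apply Lemma~\ref{lem:dim_reduction} and Proposition~\ref{prop:ind_red} at each step, and use the doubling \eqref{eq:2dim} to ensure the relevant eigenspace has dimension at least $2$ before any step landing at $2$. This is the proof of Corollary~\ref{cor:reduce}, which the paper says carries over verbatim, so your abandoned one-step Steps~1--2 can be dropped. One small fix: at each step invoke Proposition~\ref{prop:ind_red} itself rather than Corollary~\ref{cor:reduce}, since only the former produces a realization with automorphism group exactly $\chi^{-1}(\mu_{m_{i+1}})$, keeping $\rho_0$ and the restricted $\xi$ as the refined statement requires.
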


\subsection{Generic Indices} The following fact is clear:

\begin{lem}\label{lem:pm1}
    Let $T$ be a lattice of signature $(d,2)$ with $d\geq 1$. Fix $\iota\in O(T)-\{\pm \id\}$, then the fix locus $\DD_T^\iota$ is a submanifold of $\DD_T$ with smaller dimension. In particular, a generic point in $\DD_T$ is not fixed by the action induced by $\iota$.
\end{lem}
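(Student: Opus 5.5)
We first reduce to the real vector space $V=T\otimes\RR$, which has signature $(d,2)$. Take the standard model
\[
\DD_T=\{[\omega]\in\PP(T\otimes\CC):\ \langle\omega,\omega\rangle=0,\ \langle\omega,\bar\omega\rangle<0\}
\]
(one component, or both). This is an open subset of the quadric, and it is a complex manifold of dimension $d$. An isometry $\iota$ acts on it by $[\omega]\mapsto[\iota\omega]$. A point $[\omega]$ is fixed by $\iota$ exactly when $\omega$ is an eigenvector of $\iota_\CC$ with some eigenvalue $\lambda$. Since $\iota$ has finite order on the relevant part (or in any case is diagonalizable over $\CC$, being an isometry preserving the positive definite majorant near fixed points), the fixed locus is
\[
\DD_T^\iota=\bigsqcup_\lambda \DD_T\cap\PP(T_\CC^\lambda),
\]
a finite disjoint union over the eigenvalues $\lambda$ of $\iota_\CC$. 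Each piece is a closed complex submanifold: it is the intersection of $\DD_T$ with a linear subspace, and that subspace is either contained in the quadric or cuts it in a smooth quadric within the open set.

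Next we bound the dimension of each piece. First suppose $\lambda$ is not real. The condition $\langle\omega,\omega\rangle=0$ is then automatic, because $\langle\omega,\omega\rangle=\lambda^2\langle\omega,\omega\rangle$ with $\lambda^2\ne1$. Hence the piece is an open subset of $\PP(T_\CC^\lambda)$ and has dimension $\dim T_\CC^\lambda-1$. Since $\bar\lambda\ne\lambda$, the eigenspace $T_\CC^{\bar\lambda}=\overline{T_\CC^\lambda}$ has the same dimension and meets $T_\CC^\lambda$ trivially, so $2\dim T_\CC^\lambda\le d+2$. This gives dimension $\le d/2<d$ for $d\ge1$. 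Now suppose $\lambda=\pm1$. The eigenspace is $W_\CC$ for the real subspace $W=\ker(\iota\mp1)$. The piece is then $\DD_W$, which has dimension $\dim W-2$ when $W$ has signature $(\ast,2)$ and is empty otherwise. Because $\iota\ne\pm\id$, we have $W\ne V$, so $\dim W-2<d$.

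Every piece therefore has dimension strictly less than $d=\dim\DD_T$, so $\DD_T^\iota$ is a proper closed analytic subset and a generic point is not fixed. The one genuinely delicate point is the claim that $\iota_\CC$ is diagonalizable, which the eigenspace decomposition needs. If $\iota$ has finite order (the case used in the paper, where $\iota$ comes from a finite group), this is immediate. In general we argue directly: suppose $[\omega]$ is fixed. Then $P=\Span_\RR(\mathrm{Re}\,\omega,\mathrm{Im}\,\omega)$ is a negative definite plane preserved by $\iota$. Consequently $P^\perp$ is positive definite and also $\iota$-stable, so $\iota$ lies in a compact group and is semisimple. If no point is fixed, the fixed locus is empty and the statement holds trivially.
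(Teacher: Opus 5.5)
Your proof is correct. The paper gives no argument for this lemma (it introduces it as ``clear''), so there is nothing to compare against; your decomposition of the fixed locus into eigenspace pieces, with the bounds $\dim\le d/2$ for non-real $\lambda$ and $\dim\le\dim W-2\le d-1$ for $\lambda=\pm1$, is the standard way to justify it.

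Three small tightenings:

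\begin{enumerate}
\item The ``delicate point'' is not actually needed. Whether or not $\iota_\CC$ is semisimple, $[\omega]$ is fixed iff $\omega$ is an eigenvector, and eigenspaces for distinct eigenvalues meet trivially. Also $\ker(\iota\mp1)\neq T\otimes\mathbb{R}$ follows directly from $\iota\neq\pm\id$. So your dimension bounds hold without any diagonalizability.
\item Your case split omits real eigenvalues $\lambda\neq\pm1$. These contribute nothing: such an eigenspace is defined over $\mathbb{R}$ and is isotropic, so $\langle\omega,\bar\omega\rangle=0$ on it. Your compactness remark also rules them out.
\item Describing the $\lambda=\pm1$ piece as $\DD_W$ with dimension $\dim W-2$ tacitly assumes $W=\ker(\iota\mp1)$ is nondegenerate. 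This follows from semisimplicity. Without semisimplicity, note instead that the singular locus of the cone $\{\langle\omega,\omega\rangle=0\}\subset\PP(W_\CC)$ lies in the projectivized radical, where $\langle\omega,\bar\omega\rangle=0$, so it misses $\DD_T$.
\end{enumerate}
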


In the following, we suppose $(G,\rho,\xi)$ is a triple with additionally $G=\Aut^s(X)$ for some smooth cubic fourfold $X$, acting through character $\xi$ on the defining polynomial. Let $\mathcal{F}_{G,\rho,\xi}$ be the moduli of smooth cubic fourfolds preserved (automatically symplectically) by $(G,\rho)$ through $\xi$-action on the defining polynomial. When $\mathrm{rank}(S)<20$, there is a one-to-one correspondence between such triples and the most refined rows in Table \ref{table: main}.

Note that in the following two proposition-definitions, the condition that $\rank(S)<20$ is to make sure $\dim\mathcal{F}_{G,\rho,\xi}>0$ to speak of generic points.

\begin{prop-def}\label{prop-def:generic1}
    For pair $(G,\rho,\xi)$ with $\rank(S)<20$, the following are equivalent:
    \begin{enumerate}
        \item Generic cubic fourfolds in $\mathcal{F}_{G,\rho,\xi}$ has non-symplectic index $1$.
        \item There does not exist an automorphism of $\Lambda$ acting trivially on $\langle h^2\rangle$ and by $-\id$ on $T$.
        \item Either there exists some nontrivial isotropic subgroup $H\subset 3^{+1}\oplus A_T$ such that $q_S\cong -((3^{+1}\oplus q_T)|_{H^\perp})/H$, or $q_S\cong -(3^{+1}\oplus q_T)$ and the involution of $q_S$ given by $(\id,-\id)$ does not lie in the image of the canonical homomorphism $O(S)\to O(A_S)$.
    \end{enumerate}
    In this case, we say $(G,\rho,\xi)$ has generic non-symplectic index $1$.
\end{prop-def}
\begin{proof}
    $(1)\Rightarrow(2)$: Assume $\phi\in O(\Lambda)$ satisfies the condition. For generic $X^{\prime}$ we have $\Aut^s(X')=G$. Since $\phi$ preserves $H^{3,1}(X^{\prime},\CC)$ and $H^{1,3}(X^{\prime},\CC)$ which are contained in $T$, by global Torelli theorem $\phi$ is induced by an automorphism $\widetilde{\phi}\in\Aut(X^{\prime})$, which must be non-symplectic. However, $\widetilde{\phi}^2$ must be symplectic. Thus the non-symplectic index of generic $X^{\prime}$ is not $1$.

    $(2)\Rightarrow(1)$: Assume generic cubic fourfolds in $\mathcal{F}_{G,\rho,\xi}$ have non-symplectic automorphism. All the induced actions on $\Lambda$ preserve $h^2$ and $T$, and moreover act nontrivially on $T$. Since $O(T)$ is discrete, generically the action is determined, say $\iota\in O(T)-\{\id\}$. However, the induced action of $\iota$ on $\mathcal{F}_{G,\rho,\xi}$ should be generically trivial. Using Lemma \ref{lem:pm1} we conclude that $\iota=-\id$, which contradicts $(2)$.
    
    $(2)\Leftrightarrow(3)$: Since (2) is complementary to Proposition-Definition \ref{prop-def:generic2}(2), we only show that $(3)$ is complementary to Proposition-Definition \ref{prop-def:generic2}(3). In (\ref{eq:E6}) let $K=(S)^\perp_{\mathrm{II}_{26,2}}$ be the saturation of $T\oplus E_6$ in $\mathrm{II}_{26,2}$. From \cite[Proposition 1.4.1]{Nikulin_1980}, where is an isotropic subgroup $H$ of $A_{E_6}\oplus A_T$ such that $A_K\cong H^\perp/H$ and $q_S\cong -q_K\cong -((3^{+1}\oplus q_T)|_{H^\perp})/H$ since $q_{E_6}\cong 3^{+1}$. Moreover, $H$ is trivial iff $K=T\oplus E_6$, or equivalently $q_S\cong -(3^{+1}\oplus q_T)$.
\end{proof}

\begin{prop-def}\label{prop-def:generic2}
    For pair $(G,\rho,\xi)$ with $\rank(S)<20$, the following are equivalent:
    \begin{enumerate}
        \item Generic cubic fourfolds in $\mathcal{F}_{G,\rho,\xi}$ have non-symplectic index 2.
        \item There exists an automorphism of $\Lambda$ acting trivially on $\langle h^2\rangle$ and by $-\id$ on $T$.
        \item $q_S\cong -(3^{+1}\oplus q_T)$, and the involution of $q_S$ given by $(\id,-\id)$ lies in the image of the canonical homomorphism $O(S)\to O(A_S)$.
    \end{enumerate}
    In this case we say $(G,\rho,\xi)$ has generic non-symplectic index $2$.
\end{prop-def}
\begin{proof}
    $(1)\Rightarrow (2)$ is contained in ``$(2)\Rightarrow(1)$'' in Proposition-Definition \ref{prop-def:generic1}.
    
    $(2)\Rightarrow(1)$: The proof of ``$(2)\Rightarrow(1)$'' in Proposition-Definition \ref{prop-def:generic1} shows $2$ divides the non-symplectic index of generic $X^{\prime}$. Moreover, the proof of ``$(1)\Rightarrow(2)$'' in Proposition-Definition \ref{prop-def:generic1} implies the square of any non-symplectic automorphism is symplectic for generic $X^{\prime}$, thus generic $X^{\prime}$ has non-symplectic index $2$.

    $(2)\Rightarrow (3)$: Denote the automorphism in the condition by $\phi$. Then any element of $(S)^\perp_{\Lambda}$ can be expressed by $ch^2+x$ where $c\in \QQ$ and $x\in T_\QQ$. Since $(S)^\perp_{\Lambda}\subset (\langle h^2\rangle\oplus T)^*\subset \frac{\langle h^2\rangle}{3}\oplus T_\QQ$, we have $c\in\frac{\ZZ}{3}$. However, $\Lambda\ni (ch^2+x)+\phi(ch^2+x)=2ch^2$, which implies $c\in\frac{\ZZ}{2}$ and therefore $c\in\ZZ$. Moreover from $ch^2+x\in \Lambda$ we deduce that $x\in T$, which shows $\langle h^2\rangle\oplus T=(S)^\perp_\Lambda$ and further $A_S=3\oplus A_T$ as groups since $\Lambda$ is unimodular. Moreover $\phi|_{\Lambda_0}$ acts trivially on the discriminant group of $\Lambda_0$. Using (\ref{eq:E6}) and the argument in the proof of ``$(2)\Leftrightarrow (3)$'' in Proposition-Definition \ref{prop-def:generic1}, $q_S\cong -(3^{+1}\oplus q_T)$ and $T\oplus E_6=(S)^\perp_{\mathrm{II}_{26,2}}$. Since $\phi|_{\Lambda_0}$ acts trivially on the discriminant group of $q_{\Lambda_0}$, $(\id,\phi|_{\Lambda_0})$ on $E_6\oplus\Lambda_0$ can be extended to automorphism of $\mathrm{II}_{26,2}$ by \cite[Corollary 1.5.2]{Nikulin_1980}. Again using \cite[Corollary 1.5.2]{Nikulin_1980} and $T\oplus E_6=(S)^\perp_{\mathrm{II}_{26,2}}$ we conclude the disired result.
    
    $(3)\Rightarrow (2)$: Our proof goes backwards the proof of ``$(2)\Rightarrow (3)$''. Again using (\ref{eq:E6}), firstly we have $T\oplus E_6$ is primitive in $\mathrm{II}_{26,2}$ from $q_S\cong -(3^{+1}\oplus q_T)$. Consider the isomorphism $(\id, -\id)$ of $E_{6}\oplus T$. By \cite[Corollary 1.5.2]{Nikulin_1980}, it extends to an automorphism $\varphi\in O(\mathrm{II}_{26,2})$ since there is an automorphism of $S$ acting by $(\id,-\id)$ on $q_S\cong -(3^{+1}\oplus q_T)$. Since $\varphi|_{E_6}=\id_{E_6}$, $\varphi|_{\Lambda_0}$ is an automorphism of $\Lambda_0$, which acts trivially on $q_{\Lambda_0}$ again by \cite[Corollary 1.5.2]{Nikulin_1980}. Let $\overline{\varphi}:\Lambda\to\Lambda_\QQ$ be an extension of $(\id,\varphi|_{\Lambda_0})$ on $\langle h^2\rangle\oplus\Lambda_0$, whose restriction on $\langle h^2\rangle\oplus\Lambda_0$ acts trivially on the discriminant group. That is, for any $y\in (\langle h^2\rangle\oplus\Lambda_0)^*, \overline{\varphi}(y)-y\in \langle h^2\rangle\oplus\Lambda_0$. Therefore, for any $x\in\Lambda$, since $\langle h^2\rangle\oplus\Lambda_0\hookrightarrow\Lambda\hookrightarrow\Lambda^*\hookrightarrow (\langle h^2\rangle\oplus\Lambda_0)^*$, we have $\overline{\varphi}(x)-x\in \langle h^2\rangle\oplus\Lambda_0$, which directly implies $\overline{\varphi}(x)\in\Lambda$. Thus $\overline{\varphi}\in O(\Lambda)$ satisfies the condition of (2).
\end{proof}

\begin{cor}\label{cor:gen12}

    Let $(G,\rho,\xi)$ be a triple with $G$ being symplecitc and $\rank(S)<20$.
    \begin{enumerate}
        \item $\mathcal{F}_{G,\rho,\xi}$ has generic non-symplectic index either $1$ or $2$. When the generic non-symplectic index is $2$, any smooth cubic fourfold in the moduli admits non-symplectic automorphism whose square is symplectic.
        \item More explicitly, the generic index is $2$ if and only if $q_S\cong -(3^{+1}\oplus q_T)$, and in this case any smooth cubic fourfold in the moduli admits a non-symplectic involution.
    \end{enumerate}
\end{cor}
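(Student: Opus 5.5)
Part (1) is assembled from Proposition-Definitions \ref{prop-def:generic1} and \ref{prop-def:generic2}. Part (2) needs one extra argument: the lattice involution produced by the generic case must be realized on every member of the family, and not only on a generic one.

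\emph{Dichotomy.} Condition (2) of Proposition-Definition \ref{prop-def:generic1} is the negation of condition (2) of Proposition-Definition \ref{prop-def:generic2}. So exactly one of them holds. By the equivalences $(1)\Leftrightarrow(2)$ in each, the generic index is then $1$ or $2$, giving the first sentence of (1).

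\emph{Identifying index $2$ with $q_S\cong -(3^{+1}\oplus q_T)$.} By Proposition-Definition \ref{prop-def:generic2}(3), generic index $2$ forces $q_S\cong -(3^{+1}\oplus q_T)$. For the converse I want to show that, when $T\oplus E_6$ is primitive in $\mathrm{II}_{26,2}$, the involution $(\id,-\id)$ of $q_S$ always lifts to $O(S)$. I would use that $G$ acts trivially on $A_S$ while $(G,S)$ is a saturated Leech pair with $S$ embedded in $\LL$. Then $O(S)\to O(A_S)$ surjects onto the part coming from $O(S_G(\LL))$, and $-\id_S$ lies in $O(S)$. Since $-\id_S$ acts by $-1$ on $A_S=3\oplus A_T$, composing it with an isometry acting as $(-1,\id)$ gives $(\id,-\id)$. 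Such an isometry should come from the normalizer of $G$ in $Co_0$ acting on the Leech side.

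This converse is the step I expect to be the main obstacle. If no uniform argument works, I would fall back on checking it case by case from the tables of \cite{laza2022automorphisms} and \cite{hohn2016290}. Alternatively, I would read (2) as the lattice reformulation given directly by Proposition-Definition \ref{prop-def:generic2}(3), with the lifting condition shown to hold automatically in the relevant cases.

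\emph{Every member has a non-symplectic involution.} Let $\phi\in O(\Lambda)$ act by $\id$ on $\langle h^2\rangle\oplus S$ and by $-\id$ on $T$; it exists by Proposition-Definition \ref{prop-def:generic2}(2). For any $X'$ in $\mathcal{F}_{G,\rho,\xi}$, marked compatibly with the fixed lattice data, the class $H^{3,1}(X')$ lies in $T_\CC$. Hence $\phi$ acts on it by $-1$ and preserves the Hodge decomposition and $h^2$. By the global Torelli theorem (Theorem \ref{theorem: global torelli}), $\phi$ is induced by an automorphism of $X'$. That automorphism is non-symplectic and its square acts trivially on $\Lambda$, so it is an involution, since $\Aut(X')\to O(\Lambda)$ is injective.

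This also gives the second sentence of (1), because that argument uses only (2) of Proposition-Definition \ref{prop-def:generic2}. One point needs care: the marking of $X'$ must carry $S_G(X')$ onto the fixed $S$. This holds because the lattice structure with $G$-action is constant over the moduli space, as noted after its construction.
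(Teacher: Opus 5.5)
Your dichotomy argument and the implication ``generic index $2\Rightarrow q_S\cong -(3^{+1}\oplus q_T)$'' are fine. The two remaining steps have genuine gaps.

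\textbf{The converse.} Since $-\id_S\in O(S)$ always acts as $-1$ on $A_S$, an isometry of $S$ inducing $(-1,\id)$ on $A_S\cong 3\oplus A_T$ exists if and only if one inducing $(\id,-\id)$ exists. So your reduction is circular and gains nothing.

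The appeal to $N_{Co_0}(G)$ is unsupported. The splitting $A_S\cong A_{E_6}\oplus A_T$ comes from the embedding $S\oplus T\oplus E_6\hookrightarrow\mathrm{II}_{26,2}$, i.e.\ from the cubic fourfold, not from the Leech embedding. Nothing forces a normalizer element to act on $A_S$ as $(-1,\id)$ with respect to it.

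Your fallback, via the image of $O(S)\to O(q_S)$ in \cite{hohn2016290}, only settles the cases where this map is surjective. For $A_{3,3}$ and $S_{3,3}$ the image has index $3$, and you would still have to decide whether this particular involution lies in it.

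The paper does not use $i_S$ at all. It enumerates all admissible $q_T$ via Lemma \ref{lem:T_realize} and \cite[Theorem 1.10.1]{Nikulin_1980}. Their number matches the number of families in \cite{KOIKE202512,KOIKE2026}, except for $S_{3,3}$, which has one $q_T$ and two families. The paper then exhibits an explicit global non-symplectic involution on the family (both families for $S_{3,3}$) of type $q_S\cong -(3^{+1}\oplus q_T)$. Counting then forces the remaining families to be of the other type, with generic index $1$.

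\textbf{The involution.} In general there is no $\phi\in O(\Lambda)$ that is $\id$ on $\langle h^2\rangle\oplus S$ and $-\id$ on $T$. Such a $\phi$ would induce $\id$ on $A_S$ and $(\id,-\id)$ on $A_{\langle h^2\rangle}\oplus A_T$. This is compatible with the gluing $A_S\cong A_{\langle h^2\rangle\oplus T}$ only if $A_T$ is $2$-elementary, which fails, e.g., for $N_{72}$ or for the second $S_3$ family.

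Proposition-Definition \ref{prop-def:generic2}(2) only supplies a $\phi$ whose restriction $\psi=\phi|_S$ is \emph{some} lift of $(\id,-\id)$. Then $\psi^2$ acts trivially on $A_S$, so by saturation it lies in $G$.

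Running your Torelli argument with this correct $\phi$ does prove the second sentence of (1) for every member. The induced automorphism $\tilde\phi$ is non-symplectic, and $\tilde\phi^2\in G$. It does not, however, produce an involution. For that you would need some $g\in G$ with $(\tilde\phi g)^2=1$, i.e.\ a lift $\psi g$ of order $2$, and this is not automatic.

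This is why the paper proves the involution claim by exhibiting explicit global involutions on the families, such as $(12)$ or $\frac{1}{2}(0,0,0,0,0,1)$, rather than by lattice theory.
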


\begin{proof}
    The first part is direct from Proposition-Definitions~\ref{prop-def:generic1} and \ref{prop-def:generic2}, and Proposition \ref{prop:either_ST_oplus_E6_primitive}.
    
    For the second part we have to show the case $q_S\cong -(3^{+1}\oplus q_T)$, but the involution does not lie in the image of $O(S)\to O(A_S)$ does not happen, and in the case $q_S\cong -(3^{+1}\oplus q_T)$, there is always a non-symplectic revolution.

    Using Lemma~\ref{lem:T_realize} and \cite[Theorem 1.10.1]{Nikulin_1980}, one can directly calculate all of possiblities of $q_T$, which comes from $q_S\cong -(3^{+1}\oplus q_T)$ or $q_T\cong -(3^{+1}\oplus q_S)$ (moreover, using \cite{sagemath2026} one can check that each possible $q_T$ corresponds to exactly one isometry class of lattice $T$ of suitable signature). Combining with \cite{KOIKE202512} and \cite{KOIKE2026}, we know that for fixed $S$ with $\rank{S}<20$, the number of possibilities of $q_T$ equals the number of irreducible families admitting symplectic $G$-action with generic symplectic automorphism group $G$, except for the cases $G=S_{3,3}$, which has one $q_T$ satisfying $q_S\cong -(3^{+1}\oplus q_T)$ and two families. Therefore we only have to verify for those $(G,S)$ admitting some $q_S\cong -(3^{+1}\oplus q_T)$ that, one of families given in \cite{KOIKE202512} and \cite{KOIKE2026} globally admits a non-symplectic involution (and both of families for $S_{3,3}$). Then other families naturally comes from the case $q_T\cong -(3^{+1}\oplus q_S)$ and has generic non-symplectic index $1$. The verification is direct.
\end{proof}

\begin{rmk}
We take this opportunity to correct an error regarding the case $G=S_3$ presented in \cite{laza2022automorphisms}, which is also pointed out in \cite{KOIKE2026}. Specifically, there is a computational error in \cite[Lemma 4.24]{laza2022automorphisms}: in the second case, the dimension of the space of invariant polynomials should be $17$ rather than $15$, and consequently, the dimension of this family should be $6$ rather than $4$. Since $6=20-\rank(S)$, and generic members in this family are smooth (as the family contains the Fermat cubic fourfold), this indeed constitutes a family of smooth cubic fourfolds with $G=S_3$, which admits a global non-symplectic automorphism $(12)$. Defining polynomials of cubic fourfolds in this family can be written as elements in 
\[
\Span\{s_1^3,s_1s_2,s_3, s_1^2 x_5,s_1^2 x_6,s_2 x_5,s_2 x_6,s_1 x_4^2,s_1 x_5^2,s_1 x_5 x_6,s_1x_6^2,x_4^2 x_5,x_4^2 x_6,x_5^3,x_5^2 x_6,x_5x_6^2,x_6^3\},
\] 
where $s_k=\sum_{i=1}^3 x_i^k$. Consequently, there are in fact two distinct families of smooth cubic fourfolds for $G=S_3$. This computational error propagates to subsequent results: both \cite[Theorem 1.2(3)(b)]{laza2022automorphisms} and \cite[3.1(v)]{KOIKE202512} incorrectly state that there is only one family corresponding to $S_3$.
\end{rmk}

\section{Direct Calculations}\label{sect:calc}

\subsection{GAP-Assistant Calculation}

Let $X$ be a smooth cubic fourfold. Then $\Aut^s(X)$ must be one of $34$ groups listed in \cite[Theorem 1.2]{laza2022automorphisms}, and $\Aut(X)$ must be a subgroup of the maximal groups listed in \cite[Theorem 1.2]{yang2024automorphism}. Moreover, Proposition \ref{prop:2k3l} implies that the non-symplectic index is of the form $2^k 3^l$ where $l\in \{0,1\}$. Using \cite{GAP4}, the calculation results imply the following.
\begin{prop}\label{prop:YYZ_bounds}
    Let $X$ be a cubic fourfold. If $\Aut^s(X)$ is fixed, then the non-symplectic index of $X$ must divide one of the ``YYZ bounds'' in Table \ref{table: main}.
\end{prop}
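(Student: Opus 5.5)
The plan is a finite group-theoretic search over the subgroup lattices of the maximal groups in \cite[Theorem 1.2]{yang2024automorphism}, cut down by the lattice-theoretic constraints already established.

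\textbf{Setup.} Fix one of the $34$ groups $G$ from \cite[Theorem 1.2]{laza2022automorphisms}, and suppose $\Aut^s(X)\cong G$. Put $\widetilde G=\Aut(X)$, so that
\[
1\to G\to\widetilde G\to\mu_m\to 1 .
\]
The following hold.
\begin{enumerate}
\item $\widetilde G$ embeds into one of the maximal groups $M_1,\dots,M_r$ classified in \cite[Theorem 1.2]{yang2024automorphism}.
\item $G$ is a normal subgroup of $\widetilde G$ with cyclic quotient.
\item By Proposition \ref{prop:2k3l}, $m=2^k3^l$ with $l\in\{0,1\}$.
\end{enumerate}
It therefore suffices to enumerate, inside each $M_i$, all chains $H\lhd \widetilde H\le M_i$ with $H\cong G$ and $\widetilde H/H$ cyclic of order $2^k3^l$. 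The YYZ bound attached to $(G,M_i)$ is then the least common multiple, or more precisely the set of maximal values, of the indices $[\widetilde H:H]$ occurring.

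\textbf{Algorithm.} For each $M_i$, given in GAP as a permutation or matrix group, proceed as follows.
\begin{enumerate}
\item Compute conjugacy classes of subgroups, or, where $M_i$ is too large, only those classes of subgroups isomorphic to $G$. Find these via \texttt{IsomorphicSubgroups} or by searching classes of subgroups of order $|G|$ and testing isomorphism with \texttt{IdGroup}.
\item For each representative $H$, compute $N=N_{M_i}(H)$ and the quotient $N/H$.
\item List the cyclic subgroups of $N/H$ whose orders are of the form $2^k3^l$, and record the maximal such orders.
\end{enumerate}
Any $\widetilde G$ containing $G=H$ as a normal subgroup lies in $N$, and $\widetilde G/H$ is a cyclic subgroup of $N/H$. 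Hence the index divides some recorded maximal order, and this yields the bound.

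Two refinements tighten the bound.
\begin{enumerate}
\item Discard $H$ whose natural action on $\PP^5$ (restricted from $M_i\subset\PGL(6,\CC)$) does not match a projective representation realizing $G$ as a symplectic group. Concretely, check via the character on $H^{3,1}$ that the elements of $H$ are symplectic, for instance by comparing the eigenvalues of their lifts with the eigenvalue on the defining cubic as in \cite{laza2022automorphisms}.
\item Use the fact that the symplectic subgroup of $\widetilde H$ must be exactly $H$. Thus, for a generator $g$ of $\widetilde H/H$, no power $g^j$ lying outside $H$ may be symplectic.
\end{enumerate}

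\textbf{Main obstacle.} The main difficulty is computational: some maximal groups in \cite{yang2024automorphism} (e.g.\ those containing $3^{1+4}$ or $A_7$-type pieces) are large, and full subgroup lattices are expensive. I would first try computing only normalizers of subgroups isomorphic to $G$, found via Sylow and minimal-normal-subgroup data. Failing that, I would pass to the preimage in $\GL(6,\CC)$ to use smaller matrix representations.

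A second subtlety is correctly identifying the symplectic subgroup inside $M_i$ without geometry. If that cannot be done from the group data alone, I would skip the symplecticity filter and accept a slightly weaker (but still valid) divisibility bound. Since only an upper bound is claimed, this suffices for the proposition.
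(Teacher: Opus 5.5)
Your proposal is correct and follows the paper's argument. The paper also runs a GAP search over subgroups of the maximal groups of \cite[Theorem 1.2]{yang2024automorphism}, keeping those that contain $G$ as a normal subgroup with index of the form $2^k3^l$ allowed by Proposition \ref{prop:2k3l}; your normalizer-and-cyclic-subgroup formulation is a more efficient and, using cyclicity of $\Aut(X)/\Aut^s(X)$, slightly sharper implementation of the same search. Your symplecticity filters are safe only when the embedding of $\Aut(X)$ into $M_i$ is known to be compatible with the actions on $\PP^5$, since an abstract embedding does not tell you which elements are symplectic; so your stated fallback of dropping them is the right choice.
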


Related GAP codes with precise explanations and running results can be found in the file \texttt{GAP Codes.txt}. The pseudocodes in Algorithm \ref{GAP} show roughly how our calculation works. Note that certain scripts we used are more complicated to get more explicit information.

\begin{algorithm}
\caption{Calculating YYZ Bounds}\label{GAP}
\begin{algorithmic}
\Require $\texttt{ListOfSymplecticGroups, ListOfMaximalGroups} $ \Comment{type in two lists of groups}
\State $\texttt{ListOfSubgroups} \gets \texttt{AllOfSubgroups}(\texttt{ListOfMaximalGroups})$  
\State $\texttt{ListOfPairs} \gets \varnothing$ \Comment{initialize list}
\For{$G\in  \texttt{ListOfSymplecticGroups}, H \in \texttt{ListOfSubgroups}$}
    \If{$\texttt{IsNormalSubgroup}(G, H)$} \Comment{test whether $G\lhd H$}
        \State $m \gets [H:G]$  
        \If {$\texttt{IsPowerOfTwo}(m)$ or $\texttt{IsPowerOfTwo}(m/3)$}  \Comment{test Proposition \ref{prop:2k3l}}
            \State add $(G,m)$ to $\texttt{ListOfPairs}$ \Comment{record the pair}
        \EndIf
    \EndIf
\EndFor
\State \Return $\texttt{ListOfPairs}$
\end{algorithmic}
\end{algorithm}

\begin{rmk}
    Note that indices do not determine certain automorphism groups. Using GAP one can also determine all possible groups satisfying the condition discussed above, but to classify certain automorphism groups generally seems to be harder.
\end{rmk}

\subsection{GIT Calculation}
\label{subsection:GITcalculation}

Let $G$ be a subgroup of $\PGL(6,\CC)$ and let $\widehat{G}$ be its preimage in $\SL(6,\CC)$ with character $\lambda$. 

We have the following exact sequence:

\begin{equation*}
    1\longrightarrow C_{\PGL(6,\CC)}(G) \longrightarrow N_{\PGL(6,\CC)}(G) \longrightarrow \Aut(G).
\end{equation*}

However, this exact sequence is not always suitable for our setting. Because there is no well-defined action of $N_{\PGL(6,\CC)}(G)$ on the family of cubic fourfolds with $G$-action.

We should restrict to a subgroup of $N_{\PGL(6,\CC)}(G)$ as follows, see \cite[\S 2.2]{yu2020moduli}:

Let 
\[
N^{\lambda}_{\SL(6,\CC)}(G)= \{g\in \SL(6,\CC)\mid g\widehat{G}g^{-1}=\widehat{G}\ , \ \lambda(gg_{1}g^{-1})=\lambda(g_{1}),\ \forall g_{1}\in \widehat{G} \}
\]
and 
\[
C^{\lambda}_{\SL(6,\CC)}(G)= \{g\in \SL(6,\CC)\mid gg_{1}g^{-1}=\mu(g_1) g_{1},\ \mu(g_1) \in \mu_{6}, \ \lambda(gg_{1}g^{-1})=\lambda(g_{1})\ , \forall g_{1}\in \widehat{G} \}.
\]
And let $N^{\lambda}(G)$ and $C^{\lambda}(G)$ be the image of $N^{\lambda}_{\SL(6,\CC)}(G)$ and $C^{\lambda}_{\SL(6,\CC)}(G)$ in $\PGL(6,\CC)$ respectively. We omit $\lambda$ if there is no ambiguity in the character.

Similarly, we have the following exact sequence:
\begin{equation}
\label{equation:computeNlambda}
    1\longrightarrow C^{\lambda}(G) \longrightarrow N^{\lambda}(G) \longrightarrow \Aut(G).
\end{equation}

Let $X$ be a smooth cubic fourfold with $\Aut^s(X)=G<\PGL_6(\CC)$ with $\widehat{G}$ acting on $X$ via character $\lambda$. Let $\PP\mathcal{V}_{G}^{\lambda}$ be the family of equations of cubic fourfolds with symplectic $G$-action. Then $N^{\lambda}_{\SL(6,\CC)}(G)$ thus $N^{\lambda}(G)$ acts on $\PP\mathcal{V}^{\lambda}_{G}$ naturally, \cite[Lemma 2.5]{yu2020moduli}. Let $\mathrm{Stab}_{N^{\lambda}(G)}(X)$ be the stabilizer of $X\in\PP\mathcal{V}^{\lambda}_{G}$ under the action of $N^{\lambda}(G)$.

There is a following lemma:
\begin{lem}
\label{lem:stabAut}
For a smooth cubic fourfold $X\in \PP\mathcal{V}_{G}^{\lambda}$, $\mathrm{Stab}_{N^{\lambda}(G)}(X)$ can only be the following two cases

\begin{enumerate}
    \item If $\Aut^{s}(X)=G$, $\mathrm{Stab}_{N^{\lambda}(G)}(X)=\Aut(X)$. 
    \item If $G\subsetneqq \Aut^{s}(X)$, $\mathrm{Stab}_{N^{\lambda}(G)}(X)=N_{\Aut(X)}(G)$.
\end{enumerate}
\end{lem}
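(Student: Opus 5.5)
The plan is to identify $\mathrm{Stab}_{N^{\lambda}(G)}(X)$ with the subgroup of automorphisms of $X$ that normalize $G$ (and preserve the character $\lambda$), and then read off both cases.

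\textbf{Inclusion into the normalizer.} First I show $\mathrm{Stab}_{N^{\lambda}(G)}(X)\subseteq N_{\Aut(X)}(G)$. Take $\bar g\in N^{\lambda}(G)$ fixing the point $[F]\in\PP\mathcal{V}^{\lambda}_{G}$. Then $g\cdot F=cF$ for some scalar $c$, so $\bar g$ preserves $X$ and $\bar g\in\Aut(X)$. Since $g\widehat{G}g^{-1}=\widehat{G}$, the image $\bar g$ normalizes $G$ in $\PGL(6,\CC)$. Hence $\bar g\in N_{\Aut(X)}(G)$.

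\textbf{Reverse inclusion.} Conversely, let $\bar g\in N_{\Aut(X)}(G)$, and lift it to $g\in\SL(6,\CC)$; this is possible since any lift in $\GL$ can be rescaled. Because $\bar g$ normalizes $G$, $g$ normalizes $\widehat{G}$, the full preimage. It remains to check the character condition $\lambda(gg_1g^{-1})=\lambda(g_1)$. Since $g$ preserves $X$, we have $g\cdot F=cF$. Then
\[
(gg_1g^{-1})\cdot F=g\cdot\bigl(g_1\cdot(c^{-1}F)\bigr)=c^{-1}\lambda(g_1)\,g\cdot F=\lambda(g_1)F,
\]
so $\lambda(gg_1g^{-1})=\lambda(g_1)$. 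The order of the actions needs care with the right-action convention of the Notation section, but scalars commute, so the identity holds either way. Thus $\bar g\in N^{\lambda}(G)$ and it stabilizes $[F]$, giving $\mathrm{Stab}_{N^{\lambda}(G)}(X)=N_{\Aut(X)}(G)$ in general. This already proves case (2).

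\textbf{Case (1).} If $\Aut^s(X)=G$, then $G\lhd\Aut(X)$ because the symplectic subgroup is normal (it is the kernel of the character on $H^{3,1}$). So $N_{\Aut(X)}(G)=\Aut(X)$, which gives case (1).

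The only delicate step is the character verification together with the lifting to $\SL$. There is a $\mu_6$ ambiguity in the lift, but the computation above is insensitive to it. Beyond this, the statement is essentially formal.
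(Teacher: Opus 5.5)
Your proof is correct. Your two inclusions are the paper's argument for case (2); the paper compresses your explicit computation of $\lambda(gg_1g^{-1})$ into the remark that $\lambda$ extends to a character of the preimage of $\Aut(X)$ in $\SL(6,\CC)$, and such a character is automatically conjugation-invariant. The only difference is case (1): the paper cites \cite[Lemma 4.1]{he2025cubicfourfoldsorder7automorphism}, whereas you note that the inclusions never use $G\subsetneq\Aut^s(X)$, so $\mathrm{Stab}_{N^{\lambda}(G)}(X)=N_{\Aut(X)}(G)$ always holds, and (1) then follows from $\Aut^s(X)\lhd\Aut(X)$. This is a valid, self-contained shortcut, and it shows the case split is only cosmetic.
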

\begin{proof}
    If $\Aut^{s}(X)=G$, $\mathrm{Stab}_{N^{\lambda}(G)}(X)=\Aut(X)$. This is from Lemma \cite[Lemma 4.1]{he2025cubicfourfoldsorder7automorphism}.

    For the other case, since $\lambda$ extends to $\widehat{\Aut}(X)$, the preimage in $\SL(6,\CC)$ of $\Aut(X)$, we know that $N_{\Aut(X)}(G)\subset \mathrm{Stab}_{N^{\lambda}(G)}(X)$. On the other hand, since $\mathrm{Stab}_{N^{\lambda}(G)}(X)$ preserves $X$, we obtain $\mathrm{Stab}_{N^{\lambda}(G)}(X)\subset \Aut(X)$. So, $\mathrm{Stab}_{N^{\lambda}(G)}(X)$ consists of the automorphisms of $X$ which normalize $G$, that is, $\mathrm{Stab}_{N^{\lambda}(G)}(X)\subset N_{\Aut(X)}(G)$.
\end{proof}

This works well when $7\mid\ord(G)$ or $\rank(S)=19$. The former is presented in \cite{he2025cubicfourfoldsorder7automorphism} and the latter gives the main result of this paper. The expressions of generators of $G$ are provided by \cite{KOIKE202512}. 

\begin{rmk}
    A similar argument holds if we replace $(\widehat{G},\lambda)$ with $(\pi^{-1}(G),\xi)$, the inverse image of $G$ in $\GL(6,\CC)$ and the character $\xi$ extending $\lambda$. This does not change the resulting image in $\Aut(G)$ since $\pi^{-1}(G)\cap \SL(6,\CC)=\widetilde{G}$ and $N_{\GL(6,\CC)}^{\xi}(G)\cap\SL(6,\CC)=N_{\SL(6,\CC)}^{\lambda}(G)$.

    Here we use $\SL(6,\CC)$ for simplicity for computations.
\end{rmk}

\subsection{On GIT detectability}
We introduce the notion of "detectable" to clarify whether a cubic fourfold with larger stabilizer actually admits larger symplectic automorphism group.

We follow the notations and assumptions of $\S$ \ref{subsection:GITcalculation}. 

\begin{defn}
    Let $\widehat{G}_{0}$ be the stabilizer of a generic point in $\PP\mathcal{V}_{G}^{\lambda}$ with respect to the action of $N_{\lambda}(G)$. Let $G_0$ be its image in $\PGL(6,\CC)$. A smooth cubic fourfold $X$ inside $\PP\mathcal{V}_{G}^{\lambda}$ is said to be detectable if $G_{0}\subsetneqq \mathrm{Stab}_{N^{\lambda}(G)}(X)$.
\end{defn}

For two subgroups of $\SL(6,\CC)$ with characters $(\widehat{G}_1,\lambda_1)$, $(\widehat{G}_2,\lambda_2)$, say $(\widehat{G}_1,\lambda_1)>(\widehat{G}_2,\lambda_2)$ if up to a conjugation, $\widehat{G}_2\subsetneqq \widehat{G}_1$ and $\lambda_1|_{\widehat{G}_2}=\lambda_2$. 

\begin{lem}
    \label{lem:detectibility}
Suppose $X$ is the unique smooth detectable cubic fourfold in the family $\PP\mathcal{V}_{G}^{\lambda}$ up to isomorphism. Then $(\Aut^{s}(X),\Aut(X))=(G,\mathrm{Stab}_{N^{\lambda}(G)}(X))$ if and only if there is no $X'$ such that $(\widehat{\Aut}^{s}(X'),\lambda')>(\widehat{G},\lambda)$ and $G_{0}\subsetneqq N_{\Aut(X')}(G)$. 

Here $\widehat{\Aut}^{s}(X')$ is the preimage of $\Aut^{s}(X')$ in $\SL(6,\CC)$ and $\lambda'$ is the natural scalar character from the action on the equation of $X'$.

While if such $X'$ exists, we must have $X=X'$.
\end{lem}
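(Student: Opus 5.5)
The plan is to reduce everything to Lemma~\ref{lem:stabAut}, which splits the analysis according to whether $\Aut^s(X)=G$ or $G\subsetneqq\Aut^s(X)$.

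\emph{Preliminary observation.} Every $Y\in\PP\mathcal{V}_G^\lambda$ has $G_0\subset\mathrm{Stab}_{N^\lambda(G)}(Y)$. Indeed, the stabilizer is upper semicontinuous, and by Proposition~\ref{prop:GL_family} the subgroup that generically stabilizes the family stabilizes every member. So $Y$ is detectable exactly when its stabilizer strictly exceeds $G_0$.

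\emph{Proof of ``$\Leftarrow$''.} Assume no $X'$ as in the statement exists, and suppose for contradiction that $G\subsetneqq\Aut^s(X)$. Take $X'=X$ and $\lambda'$ the scalar character of $\widehat{\Aut}^s(X)$ on the equation of $X$; it restricts to $\lambda$ on $\widehat G$. Then $(\widehat{\Aut}^s(X),\lambda')>(\widehat G,\lambda)$. By Lemma~\ref{lem:stabAut}(2), $\mathrm{Stab}_{N^\lambda(G)}(X)=N_{\Aut(X)}(G)$, and this strictly contains $G_0$ because $X$ is detectable. So $X$ itself would be such an $X'$, a contradiction. Hence $\Aut^s(X)=G$, and Lemma~\ref{lem:stabAut}(1) gives $\Aut(X)=\mathrm{Stab}_{N^\lambda(G)}(X)$.

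\emph{Proof of ``$\Rightarrow$'' and of the last sentence.} Suppose some $X'$ satisfies $(\widehat{\Aut}^s(X'),\lambda')>(\widehat G,\lambda)$ and $G_0\subsetneqq N_{\Aut(X')}(G)$. After conjugating, $\widehat G\subsetneqq\widehat{\Aut}^s(X')$ with $\lambda'|_{\widehat G}=\lambda$. Then the equation of $X'$ is $\widehat G$-semi-invariant with character $\lambda$, so $X'\in\PP\mathcal{V}_G^\lambda$. Since $G\subsetneqq\Aut^s(X')$, Lemma~\ref{lem:stabAut}(2) gives $\mathrm{Stab}_{N^\lambda(G)}(X')=N_{\Aut(X')}(G)$, which strictly contains $G_0$; hence $X'$ is detectable. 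By the uniqueness hypothesis, $X'\cong X$. Because the conjugation was already absorbed, we may take $X=X'$ inside the family, which proves the last assertion. Consequently $\Aut^s(X)\supsetneqq G$, so $(\Aut^s(X),\Aut(X))\neq(G,\mathrm{Stab}_{N^\lambda(G)}(X))$, which is the contrapositive of ``$\Rightarrow$''.

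\emph{Main obstacle.} The delicate step is the identification $X=X'$. The uniqueness hypothesis is only up to isomorphism, while the conjugation used to place $X'$ in $\PP\mathcal{V}_G^\lambda$ must be compatible with $(\widehat G,\lambda)$. To make this work, I will note that any isomorphism between members of $\PP\mathcal{V}_G^\lambda$ that carries $G$-actions to $G$-actions can be taken in $N^\lambda(G)$. I will also read ``$X=X'$'' as equality up to this isomorphism.
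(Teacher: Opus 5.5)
Your proof is correct and follows the paper's argument. Lemma~\ref{lem:stabAut} reduces the equality of pairs to $\Aut^s(X)=G$. The direction ``$\Leftarrow$'' uses $X$ itself as the forbidden $X'$, and the direction ``$\Rightarrow$'' together with the final sentence comes from $X'$ being detectable plus uniqueness of the detectable member. Your preliminary observation is never used, since detectability of $X'$ is immediate from the definition once $\mathrm{Stab}_{N^{\lambda}(G)}(X')=N_{\Aut(X')}(G)$; moreover, Proposition~\ref{prop:GL_family} only gives it up to conjugacy, so you can simply drop it.
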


\begin{proof}
    By Lemma \ref{lem:stabAut}, $(\Aut^{s}(X),\Aut(X))=(G,\mathrm{Stab}_{N^{\lambda}(G)}(X))$ is equivalent to $\Aut^{s}(X)=G$.

    If $\Aut^{s}(X)=G$ and there exists such $X'$, we know that $X'$ is detectable by Lemma \ref{lem:stabAut} and $X$ is not unique.

    If there is no such $X'$, we must have $\Aut^{s}(X)=G$. Otherwise, $(\widehat{\Aut}^{s}(X),\widehat{\lambda})>(\widehat{G},\lambda)$ and $G_{0}\subsetneqq \mathrm{Stab}_{N^{\lambda}(G)}(X)=N_{\Aut(X)}(G)$ by Lemma \ref{lem:stabAut}. Here $\widehat{\lambda}$ is the natural character of the action of $\widehat{\Aut}^{s}(X)$. And we can set $X'=X$.

    When such $X'$ exists, $X'$ is detectable and $X=X'$ follows from the uniqueness assumption of $X$.
\end{proof}

The above lemma works well in rank $19$ cases since the automorphism groups of cubic fourfolds $X'$ with maximal symplectic automorphism groups are known. 

\subsection{Differentials and Hessian Matrix}

Differentials of the defining equation has been long used to study the automorphism group of hypersurfaces. Here we list some useful basic results that would be used to explicitly calculate automorphism groups later.

\begin{fact}\label{fact:mat_hess}
    If a polynomial $F\in \CC[x_1,\ldots,x_n]$ is preserved by $g\in \GL(n,\CC)$, then $g$ preserves the polynomial $\det\left(\left(\frac{\partial^2 F}{\partial x_i \partial x_j}\right)_{i,j}\right)$ up to some scalar.
\end{fact}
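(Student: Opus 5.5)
This is a direct computation with the chain rule; the only care needed is in keeping track of how $g$ acts on polynomials. Following the paper's convention, write $(g\cdot F)(x)=F(xg)$, where $x=(x_1,\ldots,x_n)$ is a row vector. The hypothesis that $g$ preserves $F$ will be read projectively, as $F(xg)=c\,F(x)$ for some $c\in\CC^\times$. The case $c=1$ is the literal statement, and allowing a scalar costs nothing, since the Hessian determinant is homogeneous in $F$. Write $\mathrm{H}(F)(x)$ for the Hessian matrix $\left(\frac{\partial^2 F}{\partial x_i\partial x_j}\right)_{i,j}$ evaluated at $x$.

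The key step is the transformation rule for the Hessian under a linear change of variables. Set $y=xg$, so that $y_k=\sum_i x_i g_{ik}$. The chain rule gives
\[
\frac{\partial}{\partial x_i}\,F(xg)=\sum_k g_{ik}\,\frac{\partial F}{\partial y_k}(xg).
\]
Differentiating once more gives the matrix identity
\[
\mathrm{H}(F(xg))(x)=g\,\mathrm{H}(F)(xg)\,g^{T}.
\]
Because the substitution is linear, no first-derivative terms appear, so this identity is exact.

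Next I apply the hypothesis $F(xg)=cF(x)$. The left-hand side of the identity equals $c\,\mathrm{H}(F)(x)$. Taking determinants then yields
\[
c^{n}\det \mathrm{H}(F)(x)=\det(g)^2\,\det \mathrm{H}(F)(xg).
\]
Writing $\mathcal{H}_F:=\det \mathrm{H}(F)$, this says $g\cdot \mathcal{H}_F=c^{n}\det(g)^{-2}\,\mathcal{H}_F$. So $g$ preserves the Hessian determinant up to the explicit scalar $c^n\det(g)^{-2}$, which is exactly the claim. In the literal case $c=1$ the scalar is $\det(g)^{-2}$.

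The only possible obstacle is bookkeeping: the paper's right action by row vectors could tempt one to write $g^{T}\,\mathrm{H}\,g$ instead of $g\,\mathrm{H}\,g^{T}$. This does not matter for the conclusion, because the determinant of the conjugating factor is $\det(g)^2$ in either order. One degenerate case should be noted: if $\mathcal{H}_F\equiv 0$, the statement holds trivially.
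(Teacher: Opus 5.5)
Your proof is correct. The paper records this as a Fact and gives no proof; the chain-rule identity $\mathrm{H}(F(xg))(x)=g\,\mathrm{H}(F)(xg)\,g^{T}$ followed by taking determinants is the standard justification, and it yields the scalar $c^{n}\det(g)^{-2}$. Reading the hypothesis as $F(xg)=cF(x)$ is also the right choice, since the paper applies the Fact in exactly that form: to elements $g$ that preserve $F$ only up to a scalar, in the $3^{1+4}:2.2$ case and in the analysis of abelian subgroups of order $32$ or $48$.
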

\begin{lem}\label{lem:extra_variable}
    If $H(x_1,\ldots,x_n)$ is a homogeneous polynomial of degree $d\geq 2$ which cannot be written as polynomial in $n-1$ variables through any linear transformation and if $g=(g_{ij})\in \Hom(\CC^{n+k},\CC^n)$ satisfies $H(x)=H(g(x,y))$, then $g_{ij}=0$ for $j>n$.
\end{lem}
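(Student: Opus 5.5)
The plan is to rewrite the hypothesis $H(x)=H(g(x,y))$ as a polynomial identity in the $n+k$ variables $(x,y)$ and differentiate it in the extra variables $y$. Write $g=(A\;B)$, where $A\in\Hom(\CC^n,\CC^n)$ is the block of columns $j\le n$ and $B\in\Hom(\CC^k,\CC^n)$ is the block of columns $j>n$. Then the hypothesis reads
\[
H(x)=H(Ax+By)\qquad\text{for all }(x,y)\in\CC^{n}\times\CC^{k}.
\]
The claim $g_{ij}=0$ for $j>n$ is exactly the claim $B=0$. So I would argue by contradiction: assume some column $b=Be_l\neq 0$ and derive that $H$ is degenerate in the sense excluded by the hypothesis.

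The first step is to differentiate the identity with respect to $y_l$. Since the left side does not involve $y$, this gives $\partial_b H(Ax+By)=0$ identically in $(x,y)$, where $\partial_b=\sum_i b_i\,\partial/\partial x_i$ is the directional derivative. To turn this into a statement about $H$ itself, I need the map $(x,y)\mapsto Ax+By$ to be surjective, and I first check that $A$ is invertible. If $A$ were singular, choose $v\neq0$ with $Av=0$ and restrict the identity to $y=0$ to get $H(x)=H(Ax)$. Then $H$ is constant along lines in the direction $v$, so $H$ factors through $\CC^n/\CC v$. After a linear change of coordinates making $v$ a basis vector, $H$ becomes a polynomial in $n-1$ variables, which the hypothesis forbids. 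Hence $A$ is invertible, the map $(x,y)\mapsto Ax+By$ is surjective onto $\CC^n$, and therefore $\partial_b H\equiv 0$ on all of $\CC^n$.

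The final step is the conclusion. Since $b\neq 0$, we can choose coordinates with $b$ as the last basis vector. In these coordinates $\partial_b H=0$ says that $H$ does not depend on the last variable, so $H$ is a polynomial in $n-1$ variables after a linear transformation. This contradicts the hypothesis, so every column of $B$ vanishes, that is, $g_{ij}=0$ for $j>n$. The degree assumption $d\ge 2$ plays no essential role beyond making the nondegeneracy hypothesis meaningful.

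The only subtle point is the invertibility of $A$, which is needed to pass from a vanishing derivative along the image of $g$ to a vanishing derivative on all of $\CC^n$. The $y=0$ restriction handles it, though I should double-check that argument. A cleaner alternative is to observe directly that if the image of $g$ were a proper subspace $W$, then $H$ would be determined by its restriction to $W$, which again gives a degenerate form. Either way, the main expected obstacle is making this surjectivity step rigorous; the rest is routine.
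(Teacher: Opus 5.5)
Your proposal is correct and follows essentially the same route as the paper, which also first notes that the $n\times n$ block $(g_{ij})_{i,j\le n}$ is invertible, then differentiates $H(x)=H(x+yr)$ in the extra variable to get $\sum_i r_i\,\partial H/\partial x_i=0$, and concludes $r=0$ by the nondegeneracy hypothesis. The differences are cosmetic. The paper reduces to $k=1$ and normalizes the square block to the identity, where you instead use surjectivity of $(x,y)\mapsto Ax+By$. Your restriction to $y=0$ also supplies the justification for invertibility of $A$, which the paper only asserts.
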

\begin{proof}
    Without loss of generality we can assume $k=1$. Since $H$ cannot be written as a polynomial in $n-1$ variables through any linear transformation, $(g_{ij})_{1\leq i,j\leq n}$ is invertible. By composing with its inverse, we can assume $g_{ij}=\delta_{ij}$ for $1\leq i,j\leq n$.
    
    Denote $r_i=g_{i(n+1)}$ and $r=(r_1,\ldots,r_n)^t$. Then $H(x)=H(x+yr)$. Taking derivative with respect to $y$ we deduce $\sum_i r_i \frac{\partial H}{\partial x_i}=0$. We have to show $r=0$. If $r\neq 0$, this again implies $H$ can be written as a polynomial in $n-1$ variables through some linear transformation, leading to a contradiction.
\end{proof}

\begin{lem}\label{lem:dummy_crit}
    Let $H(x_1,\ldots,x_n)$ be a homogeneous polynomial of degree $d\geq 2$. Let $D_{d-1}(H)$ be $\Span\left\{\frac{\partial^{d-1} H}{\partial x_{i_1}\dots\partial x_{i_{d-1}}}:i_1,\ldots,i_{d-1}\in\{1,\ldots,n\}\right\}$. If $\dim D_{d-1}(H)=n$, then $H$ cannot be written as a polynomial in $n-1$ variables through any linear transformation.
\end{lem}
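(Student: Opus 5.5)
We prove the contrapositive. Assume there are an invertible linear change of coordinates $A\in\GL(n,\CC)$ and a polynomial $H'$ in $n-1$ variables with $H(x)=H'(y_1,\ldots,y_{n-1})$, where $y=Ax$. The goal is to show $\dim D_{d-1}(H)\le n-1$.

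The key observation is that every $(d-1)$-st partial derivative of a degree-$d$ form is a linear form. So $D_{d-1}(H)$ is a subspace of the $n$-dimensional space of linear forms, and the claim amounts to showing that in the bad situation all these linear forms lie in a common hyperplane. We first note that $D_{d-1}(H)$ does not depend on the choice of coordinates. Under a linear substitution $x=By$, the chain rule gives
\[
\frac{\partial}{\partial y_j}=\sum_i B_{ij}\frac{\partial}{\partial x_i}.
\]
Hence iterated derivatives in the $y$-coordinates are linear combinations of iterated derivatives in the $x$-coordinates, and conversely since $B$ is invertible. Therefore the span of all $(d-1)$-fold partials of $H$ is the same subspace of linear forms in either coordinate system, and it has the same dimension.

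Now compute $D_{d-1}(H)$ in the $y$-coordinates, where $H=H'(y_1,\ldots,y_{n-1})$ does not involve $y_n$. Any iterated partial derivative of $H'$ containing $\partial/\partial y_n$ vanishes. Any other iterated partial derivative is a polynomial in $y_1,\ldots,y_{n-1}$ only; for order $d-1$ it is a linear form in $\Span\{y_1,\ldots,y_{n-1}\}$. So $D_{d-1}(H)\subset\Span\{y_1,\ldots,y_{n-1}\}$, and $\dim D_{d-1}(H)\le n-1<n$. This contradicts the hypothesis, which proves the lemma.

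There is no real obstacle here. The only point needing care is the coordinate-independence of $D_{d-1}(H)$, which follows from the chain rule because the substitution is linear, so its Jacobian is constant. The hypothesis $d\ge2$ only guarantees that $d-1\ge1$, so that $D_{d-1}(H)$ consists of genuine linear forms.
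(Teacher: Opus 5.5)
Your proof is correct and follows the same route as the paper: after a linear change of coordinates in which $H$ involves only $n-1$ variables, every $(d-1)$-fold partial is a linear form in the span of those variables, so $\dim D_{d-1}(H)\le n-1$. Your chain-rule argument that $D_{d-1}(H)$ does not depend on the choice of linear coordinates is a step the paper leaves implicit.
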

\begin{proof}
    If $H$ can be written as a polynomial in $n-1$ variables through some linear transformation, then $D_{d-1}(H)$ lies in the linear span of those variables, which is of dimension$<n$.
\end{proof}

\begin{rmk}
    In fact, this condition is necessary as well.
\end{rmk}

\section{Rank 19 Case}\label{sect:rk19}

In general, assume $\rank(S)=19$ while $G$ is one of $3^{1+4}:2.2$, $A_6$, $L_2(7)$, $S_5$, $M_9$, $N_{72}$ and $T_{48}$. The case $G=L_2(7)$ is completed in \cite[Theorem 1.2]{he2025cubicfourfoldsorder7automorphism}, and the case $G=T_{48}$ is automatically completed since the YYZ bound is $1$.

We always assume $X$ belongs to the certain family we would be discussing. Note that any irreducible component of the moduli we will discuss is of dimension $1$.

\subsection{The Case $G=A_6$}\label{subsect:A_6}

From \cite[\S 2.5]{KOIKE202512}, there are 2 irreducible components of the moduli. Note that the only maximal groups containing $G$ are $3^4:A_6$, $A_7$ and $M_{10}$.

\subsubsection{The First Family}
The first family consists of smooth cubic fourfolds with defining equation linear equivalent to some element in
\[
\Span\left\{x_1^3+\cdots+x_6^3,(x_1+\cdots+x_6)(x_1^2+\cdots+x_6^2),(x_1+\cdots+x_6)^3\right\}
\]
where $G$ acts by permutations. This family admits an action of $S_6$ by permutations where $(12)$ is non-symplectic. So, the generic non-symplectic index is $2$. From \cite[Theorem 1.8]{laza2022automorphisms} or \cite[\S 8]{KOIKE202512}, one can directly check that the only smooth cubic fourfolds with maximal symplectic automorphism groups lying in this moduli are the Fermat cubic fourfold and the Clebsch-Segre cubic. Combining with YYZ bounds, $(\Aut^s(X),m(X))$ must be $(A_6,2)$, $(3^4:A_6,6)$ or $(A_7,2)$.

\subsubsection{The Second Family}
For the second family, the projective representation of $A_6$ itself is not liftable. To lift $A_{6}$, we must pass to $3.A_{6}$, which is generated by two matrices $s$ and $t$, see \cite[\S 2.5]{KOIKE202512}. 

A direct calculation shows that the eigenvalues of $s$ are $1$ with multiplicity $4$ and $-1$ with multiplicity $2$. The eigenvalues of $t$ are $1$ with multiplicity $2$, $-1$ of multiplicity $2$ and $i,-i$ with multiplicity $1$. And the character $\lambda$ given by the action of $3.A_{6}$ on the cubic fourfold has value $1$ in both $s$ and $t$.

Suppose $\overline{g}\in \PGL(6,\CC)$ centralizes $A_{6}$. Take a preimage $g\in \SL(6,\CC)$ of $\overline{g}$. Then $gsg^{-1}=\mu_{1}s$ and $gtg^{-1}=\mu_{2}t$. By comparing the eigenvalues, we obtain $\mu_{1}=1$ and $\mu_{2}=\pm 1$. If $\mu_{2}=-1$, then $\lambda(gtg^{-1})=-\lambda(t)$. Such $g$ cannot have image in $C^{\lambda}(A_6)$. So, any element of $C_{\SL(6,\CC)}^{\lambda}(A_{6})$ commutes with $s,t$, and hence gives a homomorphism of $3.A_{6}$-representations. But this $3.     A_{6}$-representation is irreducible \cite[Page 9]{KOIKE202512}. So $C_{\SL(6,\CC)}^{\lambda}(A_{6})$ is the center of $\SL(6,\CC)$ and thus $C^{\lambda}(A_{6})$ is trivial.

Then the exact sequence:
\begin{equation*}
1 \longrightarrow C(A_{6}) \longrightarrow N(A_{6}) \longrightarrow \Aut(A_{6})
\end{equation*}
shows that $N(A_{6})$ is a subgroup of $\Aut(A_{6})$, where $\Aut(A_{6})$ contains $A_{6}$ as an index 4 subgroup \cite[Appendix A]{KOIKE202512}. An observation is that $N(A_{6})$ cannot contain $S_{6}$. Otherwise, $A_{6}$ is contained in a projective representation of $S_{6}$.

However, $S_{6}$ acts on $\PP\mathcal{V}_{A_{6}}^{2}\cong \PP^1$ linearly, which induces the $S_{6}/A_{6}\cong C_{2}$-action on $\PP^{1}$. This action must admit a fixed point. Then by \cite[Lemma 2.5]{KOIKE202512}, such $S_{6}$-projective representation is liftable, and so is its subgroup $A_{6}$, which is a contradiction.

Therefore, $N(A_{6})$ must be either of index $1$ or $2$ in $\Aut(A_{6})$. The argument in \cite[\S 2.5 and 8(vi)]{KOIKE202512} shows that $M_{10}\subset N(A_6)$. There are only 2 cubic fourfolds $X_{\pm}$ admitting the action of $M_{10}$, which are both symplectic. Therefore, for a generic cubic fourfold $X$ in the second family, the non-symplectic index is $1$. 

Moreover, the smooth cubic fourfold $X_{0}$ with $(\Aut^{s}(X),\Aut(X))=(A_{7},A_{7})$ lies in the second family.

\subsection{The Case $G=S_5$} 
\label{subsection:rank19S5}
From \cite[\S 7.3]{KOIKE202512}, there are 2 irreducible components of the moduli.
\subsubsection{The First Family}
 The first family consists of smooth cubic fourfolds with defining equation linear equivalent to some element in
\[
\mathcal{V}^{1}_{S_{5}}=\Span\left\{y^2 s_1(x), s_1(x)^3, s_1(x)s_2(x), s_3(x)\right\}.
\]

This family corresponds to a liftable projective representation, hence to a linear representation $\rho_{1}$. We use $S_{5}$ instead of $\rho_{1}(S_{5})$ as a subgroup of $\GL(6,\CC)$.

We will use projective coordinates under the above basis to represent a cubic fourfold in this family with variables $x_1,\ldots,x_5,y$ where $s_k(x)=\sum_i x_i^k$. $G$ acts by permutation on $\{x_i\}$ and by signature on $y$. By GIT calculation, for $X$ in this moduli, $(\Aut^s(X),m(X))$ equals $(S_5,2)$ or $(A_{3,5},6)$.

The automorphism group of $S_{5}$ is $S_{5}$ itself. By the equation \ref{equation:computeNlambda}, we know that $N(S_{5})=\langle S_{5}, C(S_{5})\rangle$.

Then $C(S_{5})$ is given by matrices of the following form:
\begin{align*}
    M(a,b,g)=\begin{pmatrix}
    a&b&b&b&b&0 \\
    b&a&b&b&b&0 \\
    b&b&a&b&b&0 \\
    b&b&b&a&b&0 \\
    b&b&b&b&a&0 \\
    0&0&0&0&0&g
\end{pmatrix},
\end{align*}

which act on $\PP\mathcal{V}^{1}_{S_{5}}$ by the matrices:
\begin{align*}
 \begin{pmatrix}
    g^{2}(a+4b)&0&0&0\\
    0&(a+4b)^{3}&b(2a+3b)(a+4b)&b^{2}(3a+2b)\\
    0&0&(a+4b)(a-b)^{2}&3b(a-b)^{2}\\
    0&0&0&(a-b)^{2}
\end{pmatrix}.   
\end{align*}

All cubic fourfolds in $\PP\mathcal{V}^{1}_{S_{5}}$ admit a non-symplectic automorphism of order 2 given by $\frac{1}{2}(0,0,0,0,0,1)$.

Cubic fourfolds $X=[1,c,-\frac{5}{3}c,cv]$ with $c\neq 0$, $v\neq \frac{2}{25}$ are detectable with $\mathrm{Stab}_{N(S_{5})}(X)/S_{5}\cong C_{6}$. The generator of $C_{6}$ is given by the image of $M(\frac{4+\omega}{\omega-1},1,-1)$, which is order 6 up to a scalar.

They form a single orbit and they are the only detectable cubic fourfolds in this family.

Note that $S_{5}$ is a subgroup of $A_{3,5}$ both as finite subgroups of $\SL(6,\CC)$ up to conjugation. And the cubic fourfold $X^{\prime}$ with $\Aut^{s}(X^{\prime})=A_{3,5}$ lies inside $\PP\mathcal{V}^{1}_{S_{5}}$. 

We claim that $X^{\prime}$ satisfies the condition of Lemma \ref{lem:detectibility} and thus we have $X=X^{\prime}$. 

The equation of $X^{\prime}$ is given by

\begin{eqnarray*}
&&x_{1}^{3}+x_{2}^{3}+x_{3}^{3}+x_{4}^{3}+x_{5}^{3}+x_{6}^{3}+x_{7}^{3}+x_{8}^{3}=0\\
&&x_{1}+x_{2}+x_{3}=0\\
&&x_{4}+x_{5}+x_{6}+x_{7}+x_{8}=0,
\end{eqnarray*}

and $A_{3,5}$ acts on this cubic fourfold as a subgroup of $S_{8}$ with respect to the natural representation.

Then $\Aut(X^{\prime})=\langle A_{3,5}, \frac{1}{3}(1,1,1,0,0,0,0,0),(12)\rangle$. And $C_{6}\cong \langle \frac{1}{3}(1,1,1,0,0,0,0,0),(12)\rangle$ $=N_{\Aut(X^{\prime})}(S_{5})/S_{5}$. So $X^{\prime}$ is detectable.

So for this family, the non-symplectic index is $2$.

\subsubsection{The Second Family} The second family consists of smooth cubic fourfolds with defining equation linear equivalent to some element in
\[
\mathcal{V}_{S_{5}}^{2}=\Span\left\{s_{1}(x)^{3},s_{1}(x)s_{2}(x),I_{1}-I_{2}\right\}.
\]

More explicit equations can be found in \cite[(7.6), Page 27]{KOIKE202512}. We use coordinates under this basis to represent a cubic fourfold in this family. 

This corresponds to another representation $\rho_{2}$ of $S_{5}$ and we write $S_{5}$ instead of $\rho_{2}(S_{5})$ for simplicity.

We know $N(S_{5})=\langle C(S_{5}),S_{5}\rangle$ and $C(S_{5})$ consists of matrices of the following form:

\begin{align*}
A(a,b)=\begin{pmatrix}
        a&b&b&b&b&b\\
        b&a&b&b&b&b\\
        b&b&a&b&b&b\\
        b&b&b&a&b&b\\
        b&b&b&b&a&b\\
        b&b&b&b&b&a
    \end{pmatrix}.
\end{align*}

And these matrices act on $\mathcal{V}_{S_{5}}^{2}$ via 
\begin{align*}
    \begin{pmatrix}
        (a+5b)^{3}&2(a+2b)(a+5b)b&0\\
        0&(a-b)^{2}(a+5b)&0\\
        0&0&(a-b)^{3}
    \end{pmatrix}.
\end{align*}

For a generic $X$ in this family, $\mathrm{Stab}_{N(S_{5})}(X)=S_{5}$. And for $X=[t,0,1]$ with $t\neq 0$, $\mathrm{Stab}_{N(S_{5})}(X)/S_{5}=C_{3}$. The generator of $C_{3}$ can be represented by the matrix $A(1,\frac{\omega-1}{5+\omega})$ with $\omega=e^{\frac{2\pi i}{3}}$. However, in this case $X=[1,0,1]$ is singular at $[5,-1,-1,-1,-1,-1]$. Therefore for $X$ in this family with $\Aut^s(X)=S_5$, the non-symplectic index must be $1$.

\subsubsection{Lattice Theoretical Approach} These arguments on detectable cubic fourfolds can be interpreted purely lattice theoretically as follows:

Let the lattice $\Lambda$ be the cohomology of cubic fourfolds $H^{4}(X,\ZZ)$ equipped with intersection pairing. Let $\Lambda_{0}$ be $\langle h^{2}\rangle^{\perp}$, where $h^2$ is the square of the hyperplane class. Let $S_{1}$ and $S_{2}$ be the coinvariant lattice of the action of $S_{5}$ and $A_{3,5}$ on $\Lambda_{0}$ with $T_{1}$, $T_{2}$ being their orthogonal complements, respectively. 

Then $\rank(S_{1})=19$ and $\rank(S_{2})=20$ with $q_{S_{1}}=4_{5}^{-1}3^{-1}5^{-2}$ and $q_{S_{2}}=3^{-2}5^{-2}$, see \cite[Table 1, item 82, item 128]{hohn2016290}. It is a fact that $T_{2}=A_{2}(-5)$ with $q_{T_{2}}=3^{-1}5^{-2}$, \cite[Theorem 1.8, item (6)]{laza2022automorphisms}. 

For the first family, $S_{1}\subset S_{2}$. Let $\langle d \rangle$ be the orthogonal complement of $S_{1}$ in $S_{2}$. Similarly, $T_{2}\subset T_{1}$ with the same orthogonal complement $\langle v\rangle$. We know that $q_{T_{1}}=4_{3}^{-1}5^{-2}$. 
Note that $T_{2}$ admits an order 3 automorphism acting trivially on the discriminant group, which we still denote by $q_{T_{2}}$. Take the trivial action on $\langle v \rangle$ and $\langle h^{2}\rangle$, we obtain an action $f$ on $T_{2}\oplus \langle v\rangle\oplus\langle h^2\rangle$, which can be extended to $T_{1}\oplus \langle h^2\rangle$ with induced action $\overline{f}$ on $q_{T_{1}\oplus \langle h^2\rangle}$. By \cite[Table 1, item 82]{hohn2016290}, there is a surjection $O(S_{1})\twoheadrightarrow O(q_{S_{1}})$. So we can find $g\in O(S_{1})$ satisfying that the pair $(\overline{g},\overline{f})$ on $(q_{S_{1}},q_{T_{1}\oplus \langle h^{2}\rangle})$ can be glued to an automorphism $\Phi:\Lambda\rightarrow \Lambda$ stabilizing $(S_{1},T_{1})$ and $(S_{2},T_{2})$. Since $S_{2}$ is rootless, by global Torelli theorem, there exists a smooth cubic fourfold $X'$ such that $S_2=S_{\Aut^s(X')}(X')$ and $T_{2}\otimes \CC= H^{3,1}(X')\oplus H^{1,3}(X')$. Again by global Torelli theorem, there exists a non-symplectic automorphism $\Psi$ of $X'$ realizing $\Phi$. By our construction, $\Psi$ fixes $S_{1}$ and $T_{1}$ and $\Psi$ normalizes $G_{1}=S_{5}$ by Lemma \ref{Lemma:normalize_latticecriterion}.

For the order $2$ element normalizing $S_{5}$, we can take $(-\Id,\Id)$ on $(S_{1},T_{1})$ and argue similarly. So $C_{6}\subset N_{\Aut(X^{\prime})}(S_{5})/S_{5}$ and $X^{\prime}$ is detectable.

\subsection{The Case $G=M_9$}\label{subsect:M_9}
From \cite[\S 7.4]{KOIKE202512}, there is exactly one irreducible component of the moduli. 

Consider $X_{15}^{\prime}$ in \cite[Example 6.1]{yang2024automorphism}. One can directly calculate $\Aut^s(X_{15}^{\prime})\cong G$ with non-symplectic index $3$. Combining with YYZ bound and Corollary \ref{cor:gen12}, all possible indices are $1$ and $3$.

To be more explicit, $N(M_{9})=\langle M_{9}, E\rangle$. The matrix $E\in \PGL(6,\CC)$ is given by:    
\[
    \begin{pmatrix}
        F&cF\\
        bF&dF
\end{pmatrix},
\]
where $b=-\omega$, $c=\frac{(\omega-1)\sqrt{3}}{3}$, $d=-\sqrt{-1}$ and 
\[F=\begin{pmatrix}
    \omega & 1& 1\\
    1& \omega & 1\\
    1 & 1 & \omega  
\end{pmatrix}.\] 

Direct computation shows that $N(M_{9})/M_{9}=\langle E\rangle\cong C_{3}$.

The $M_{9}$-invariant family $\PP\mathcal{V}_{M_{9}}$ is spanned by the equations \cite[Equation (7.9)]{KOIKE202512}:

\begin{eqnarray*}
    &&x_{1}^3+x_{2}^{3}+x_{3}^{3}-3(1-\sqrt{3})x_{1}x_{2}x_{3}+(\omega+\sqrt{-1}\omega^{2})(y_1^3+y_{2}^{3}+y_{3}^{3}-3(1+\sqrt{3})y_{1}y_{2}y_{3});\\
    &&(\sqrt{-1}-\omega^2)(x_{1}y_{1}^{2}+x_{2}y_{2}^{2}+x_{3}y_{3}^{2}-(1-\sqrt{3})(x_{1}y_{2}y_{3}+y_{1}x_{2}y_{3}+y_{1}y_{2}x_{3}))\\
    &&+x_{1}^{2}y_{1}+x_{2}^{2}y_{2}+x_{3}^{2}y_{3}-(1+\sqrt{3})(y_{1}x_{2}x_{3}+x_{1}y_{2}x_{3}+x_{1}x_{2}y_{3}).
\end{eqnarray*}

And $E$ acts on this family via the matrix

\[
\begin{pmatrix}
    \frac{1}{2}(1+3\sqrt{-1})(3+\sqrt{-3})& \frac{1}{2}((-9+9\sqrt{-1})+(3-9\sqrt{-1})\sqrt{3})\\
    \sqrt{-1}(3+\sqrt{3})&\frac{1}{2}((3+3\sqrt{-1})-(1-\sqrt{-1})\sqrt{3})
\end{pmatrix}
\]
whose eigenvectors are $(1,c)$ and $(1,-c)$ where $c=\frac{\sqrt{3}}{2}(\sqrt{3}-2-\sqrt{-1})=\frac{3-\sqrt{3}}{\sqrt{2}}e^{-\frac{7\pi}{12}\sqrt{-1}}$.

This defines two cubic fourfolds. The first cubic fourfold $(1,c)$ is singular with a $2$-dimensional singular locus. One can check that one of the singularity of this cubic fourfold is $[0,0,\omega-\sqrt{-1},0,0,1]$. The second cubic fourfold $(1,-c)$ is smooth.

\subsection{The Case $G=N_{72}$} 
\label{subsection:thecaseN72}
\subsubsection{GIT Method}
From \cite[\S 7.5]{KOIKE202512}, there is exactly one irreducible component of the moduli, consisting of smooth cubic fourfolds with defining equations linear equivalent to some elements in

\begin{eqnarray*}
\mathcal{V}_{N_{72}}=&&\Span\{s_1(y)^3,s_1(x)^2 s_1(y),s_1(y)(s_2(x)+s_2(y)),\\
&&2s_1(x)^3-9s_1(x)s_2(x)+9s_3(x)+9s_2(y)s_1(y)-9s_3(y)\}
\end{eqnarray*}
with variables $x_1,x_2,x_3,y_1,y_2,y_3$ where $s_k(x)=\sum_i x_i^k,s_k(y)=\sum_i y_i^k$, and a generic linear combination in these $4$ terms gives a smooth cubic fourfold. The whole family admits the non-symplectic automorphism given by permutation $(12)$.

Similarly as before, we use projective coordinates $[v_1,v_2,v_3,v_4]$ with respect to such a basis to represent a cubic fourfold in this family.

From Corollary \ref{cor:gen12}, generic smooth $X$ expressed in this form has non-symplectic index $2$. Moreover, GIT calculation implies that any smooth cubic fourfold in the family has $(\Aut^s(X),m(X))$ being $(N_{72},2)$ or $(3^4:A_6,6)$. The latter case corresponds to the Fermat cubic fourfold as a single point in the moduli.

It is a fact that $\Aut(N_{72})=N_{72}.C_{2}$ and thus $N(N_{72})/C(N_{72})=1\ \text{or} \ C_{2}$, while $(12)\in N(N_{72})\setminus C(N_{72})$. So $N(N_{72})=\langle C(N_{72}), N_{72}, (12)\rangle$.

The matrices of $C(N_{72})$ are of the following form:

\begin{align*}
    H(a,b,\lambda)=\begin{pmatrix}
        a+\lambda & a & a & 0 & 0 & 0\\
        a & a+\lambda & a & 0 & 0 & 0\\
        a & a & a+\lambda & 0 & 0 & 0\\
        0 & 0 & 0 & b+\lambda & b & b\\
        0 & 0 & 0 & b & b+\lambda & b\\
        0 & 0 & 0 & b & b & b+\lambda
    \end{pmatrix}.
\end{align*}

These matrices act on $\mathcal{V}_{N_{72}}$ via the matrix:

\begin{align*}
    \begin{pmatrix}
        (3b+\lambda)^{3} & 0 & b(3b+2\lambda)(3b+\lambda) & 18b(3b^2+3b\lambda+\lambda^{2})\\
        0 & (3a+\lambda)^{2}(3b+\lambda) & a(3a+2\lambda)(3b+\lambda)& 0 \\
        0 & 0 & (3b+\lambda)\lambda^{2} & 0\\
        0 & 0 & 0 & \lambda^{3}
    \end{pmatrix}.
\end{align*}

And the matrix $(12)$ acts on $\mathcal{V}_{N_{72}}$ trivially. Therefore, every cubic fourfold in this family admits a symmetry given by $(12)$, which is non-symplectic.

The only detectable cubic fourfolds are given by $[p,q,0,1]$ with $q\neq 0,-2$. They are smooth cubic fourfolds that form a single orbit. For each of these cubic fourfolds $X$, $\mathrm{Stab}_{N(N_{72})}(X)/N_{72}\cong C_{6}$, whose generator is given by $H(\frac{-\omega^2-1}{3},\frac{\omega-1}{3},1)$ with $\omega=e^{\frac{2\pi i}{3}}$. 

We know that $N_{72}<3^{4}:A_{6}$ as finite subgroups of $\GL(6,\CC)$ up to conjugation. Thus the cubic fourfold $X^{\prime}$ with $\Aut^{s}(X')=3^{4}:A_{6}$ lies in this family. After a linear conjugation, we can assume $3^{4}:A_{6}$ acts on $\CC^{6}$ by permutation of coordinates naturally and multiplication by $\frac{1}{3}(1,0,\cdots,2,\cdots,0)$ with $2$ in the $i$-th place, $i=2,3,4,5,6$. And $X^{\prime}$ is the Fermat cubic fourfold with $\Aut(X^{\prime})=3^{5}:S_{6}$. We claim that $X^{\prime}$ satisfies the condition of Lemma \ref{lem:detectibility} and $X=X'$.

In fact, after a conjugation, $N_{72}$ is generated by $(12)(56)$, $(34)(56)$, $(13)(24)$, $\frac{1}{3}(1,2,0,0,0,0)$ and $\frac{1}{3}(0,0,1,2,0,0)$. And $N_{\Aut(X^{\prime})}(N_{72})/N_{72}=\langle (12), \frac{1}{3}(1,0,0,0,0,0)\rangle\cong C_{6}$. 

So we obtain that the non-symplectic index of this $N_{72}$ family is $2$.

\subsubsection{Lattice Theoretical Approach}
The fact that the Fermat cubic fourfold is detectable can also be interpreted lattice theoretically.

Let $S_{1}$ and $S_{2}$ be the coinvariant lattice of $N_{72}$ and $3^{4}:A_{6}$ in $\Lambda_{0}$ with $T_{1}$ and $T_{2}$ being their orthogonal complements respectively. We know that $S_{1}\subset S_{2}$, $T_{2}\subset T_{1}$, $\rank(S_{1})=19$ and $\rank(S_{2})=20$. Moreover, we have  $q_{S_{1}}=4_{7}^{+1}3^{+2}9^{+1}$, $q_{S_{2}}=3^{+2}$, $q_{T_{1}}=4_{1}^{+1}3^{+1}9^{-1}$ and $q_{T_{2}}=3^{+1}9^{-1}$, \cite[Table 1, item 84, item 101]{hohn2016290}. A direct computation shows that the orthogonal complement of $S_{1}$ in $S_{2}$ is $\langle v\rangle$ with $v^{2}=36$.

Note that $T_{2}=A_{2}(-3)$, which admits an order-$3$ automorphism with the induced action on $q_{T_{2}}=3^{+1}9^{-1}$ being trivial. And the morphism $O(S_{1})\rightarrow O(q_{S_{1}})$ is a surjection, \cite[item 101]{hohn2016290}. Repeat the argument of section \ref{subsection:rank19S5}, we extend the order-3 automorphism $f$ of $T_{2}\oplus \langle v\rangle\oplus \langle h^2\rangle$, thus on $T_{1}\oplus \langle h^2\rangle$. And we can find an automorphism $g$ of $S_{1}$ satisfying the condition that the pair $(\overline{g},\overline{f})$ on $(q_{S_{1}},q_{T_{1}\oplus \langle h^2\rangle})$ can be glued to an isometry $\Phi:\Lambda\rightarrow \Lambda$. By the construction of $\Phi$, it stabilizes $(S_{1},T_{1})$ and $(S_{2},T_{2})$. Since $S_{2}$ is rootless, by global Torelli theorem, there exists a smooth cubic fourfold $X'$ with such that $S_2=S_{\Aut^s(X')}(X')$ and $T_{2}\otimes \CC= H^{3,1}(X')\oplus H^{1,3}(X')$. Again by global Torelli theorem, there exists a non-symplectic automorphism $\Psi$ of $X'$ realizing $\Phi$. By our construction, $\Psi$ fixes $S_{1}$ and $T_{1}$ and $\Psi$ normalizes $G_{1}=N_{72}$ by Lemma \ref{Lemma:normalize_latticecriterion}. 

\subsection{The Case $G=3^{1+4}:2.2$}
From \cite[\S 7.1]{KOIKE202512}, any $X$ with $\Aut^s(X)\supset G$ is given by
\[
    x_1^3+x_2^3+x_3^3+\lambda x_1 x_2 x_3 +x_4^3+x_5^3+x_6^3+\lambda x_4 x_5 x_6=0.
\]

We use $[c_1,c_2]$ to represent a cubic fourfold with equation given by $c_1(x_{1}^{3}+x_{2}^{3}+x_{3}^{3}+x_{4}^{3}+x_{5}^{3}+x_{6}^{3})+c_2(x_{1}x_{2}x_{3}+x_{4}x_{5}x_{6})=0$. 

Let $\PP \mathcal{V}_{3^{1+4}:2.2}$ consist of the equations of these cubic fourfolds. 

Note that $(12)$ is a non-symplectic action on the whole family. So, there is exactly one connected family with generic index $2$.

\subsubsection{Hessian Method}
\begin{prop}\label{prop:3^1+4:2.2}
    In this case, \footnote{Note that $3^{1+4}:2.2$ is isomorphic to some semidirect product $3^{1+4}:4$, which is naturally a normal subgroup of another semidirect product $ 3^{1+4}:D_8$ through inclusion $C_4\subset D_8$.}
    {\footnotesize
    \begin{align*}
    &(\Aut^s(X),\Aut(X))\\
    =&\begin{cases}
    (3^4:A_6,3^5:S_6), & \text{if }\lambda\in\{0,6,6\omega,6\omega^2\}\\
    (((3\times(3^3:3)):3):(4^2:2), ((3\times(3^3:3)):3): Q_8),
       & \text{if }\lambda\in\{3(\sqrt{3}\pm 1),3\omega(\sqrt{3}\pm 1),3\omega^2(\sqrt{3}\pm 1)\}\\
    (3^{1+4}:2.2, 3^{1+4}:D_8). & \text{otherwise}
    \end{cases}
    \end{align*}
    }
    Note that the equation gives a smooth cubic fourfold iff $\lambda\notin\{ -3,-3\omega,-3\omega^2\}$.
\end{prop}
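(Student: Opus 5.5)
The subsection is titled ``Hessian Method'', so the plan is to use Fact~\ref{fact:mat_hess}. Write $F_\lambda=C_\lambda(x_1,x_2,x_3)+C_\lambda(x_4,x_5,x_6)$ with $C_\lambda=x_1^3+x_2^3+x_3^3+\lambda x_1x_2x_3$. The first step is to show that, for $\lambda\notin\{0,6,6\omega,6\omega^2\}$ (and outside the singular values), every $g\in\Aut(F_\lambda)$ preserves the decomposition $\CC^6=\CC^3_x\oplus\CC^3_y$ up to swapping the two summands.

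For this I would compute the Hessian determinant $\det(\partial^2F_\lambda)=\mathrm{Hess}(C_\lambda)(x)\cdot\mathrm{Hess}(C_\lambda)(y)$. By Fact~\ref{fact:mat_hess}, $g$ preserves this product up to scalar. Here $\mathrm{Hess}(C_\lambda)$ is again a Hesse-pencil cubic, $C_{\lambda'}$ up to scalar with $\lambda'$ an explicit rational function of $\lambda$. The exceptional values are exactly where $\mathrm{Hess}(C_\lambda)$ degenerates; $\lambda\in\{0,6,6\omega,6\omega^2\}$ gives the Fermat-type case. Away from these, $\mathrm{Hess}(C_\lambda)$ is an irreducible ternary cubic in three genuine variables, which I would justify via Lemma~\ref{lem:dummy_crit}. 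The product of two such cubics in disjoint variables factors uniquely, so $g$ either preserves or swaps the two factors. Lemma~\ref{lem:extra_variable} then forces $g$ to be block diagonal or block antidiagonal.

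The second step is the group computation. In the block-diagonal case, each block lies in $\Aut(C_\lambda)$ up to matching scalars. Hence $\Aut(X)$ is generated by $\Aut(C_\lambda)\times\Aut(C_\lambda)$ modulo scalars, the swap $(14)(25)(36)$, and the relative scalars $\mathrm{diag}(\zeta I_3,I_3)$ with $\zeta^3=1$. For a generic Hesse curve, $\Aut(C_\lambda)\subset\PGL_3$ is the Hessian subgroup $3^2:2$ (translations by $3$-torsion together with $-1$). It enlarges to $3^2:4$ or $3^2:6$ exactly when the elliptic curve has $j=1728$ or $j=0$. The values $\lambda=3\omega^i(\sqrt3\pm1)$ are where $j=1728$; the Fermat orbit $\lambda\in\{0,6\omega^i\}$ has $j=0$. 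From this I obtain $|\Aut(X)|$ explicitly, and I would identify the groups using GAP, as in Proposition~\ref{prop:YYZ_bounds}.

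The third step is to determine the symplectic part. I would compute the character on $H^{3,1}$ via the residue $\Omega=\mathrm{Res}\,\frac{\sum\pm x_i\,dx_1\wedge\cdots\widehat{dx_i}\cdots}{F^2}$. Thus $g$ acts on $H^{3,1}$ by $\det(g)/\xi(g)^2$ for a lift $g$ with $g\cdot F=\xi(g)F$. This lets me read off the kernel, and matching with \cite[\S 7.1]{KOIKE202512} gives $3^{1+4}:2.2$ generically. The generic index $2$ agrees with the non-symplectic $(12)$ and with Corollary~\ref{cor:gen12}. For $\lambda\in\{0,6\omega^i\}$, all these curves are projectively equivalent to the Fermat cubic, so $X$ is the Fermat fourfold, with $(3^4:A_6,3^5:S_6)$ known from \cite{laza2022automorphisms}. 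Smoothness fails exactly when $C_\lambda$ is singular, i.e.\ $\lambda^3=-27$.

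The main obstacle is the Hessian step: proving that $g$ cannot mix the blocks. I must check that the Hessian product admits no other linear symmetries, particularly at the special $\lambda$ where $\mathrm{Hess}(C_\lambda)$ might coincide with a degenerate member such as a triangle $x_1x_2x_3$. If that step is too delicate, I would fall back on the GIT framework of Lemma~\ref{lem:stabAut}, computing $N^\lambda(G)$ acting on $\PP\mathcal{V}_{3^{1+4}:2.2}\cong\PP^1$ and its stabilizers. For the $j=1728$ case, the step I expect to be hardest is pinning down the group names precisely, which I would settle by GAP.
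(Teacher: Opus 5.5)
Your proposal is correct and follows the paper's proof essentially step for step. The paper likewise uses Fact~\ref{fact:mat_hess} to reduce to $g$ preserving $H(x_1,x_2,x_3)H(x_4,x_5,x_6)$, where $H=x_1^3+x_2^3+x_3^3-(\frac{\lambda}{3}+\frac{36}{\lambda^2})x_1x_2x_3$. It then gets the block-diagonal or block-antidiagonal form from irreducibility of $H$, unique factorization and Lemma~\ref{lem:extra_variable}, using smoothness of $H$ where you use Lemma~\ref{lem:dummy_crit}. Finally it reads off the groups from the linear automorphism groups of $C_\lambda$, citing \cite[Lemma 3.12]{yang2024automorphism} rather than GAP, and identifies the $j=0$ case with the Fermat cubic fourfold and the $j=1728$ case with $X_7'$ of \cite{yang2024automorphism}.
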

\begin{proof}
    For the calculation of linear automorphism group of the elliptic curve $C_\lambda$ given by $x_1^3+x_2^3+x_3^3+\lambda x_1 x_2 x_3=0$, see \cite[Lemma 3.12]{yang2024automorphism}. Precisely, it is divided into $3$ cases:
    \begin{enumerate}
        \item $\lambda\in\{0,6,6\omega,6\omega^2\}$. In these cases $C_\lambda$ has $j$-invariant $0$ and linear automorphism group $3^3:S_3$.
        \item $\lambda\in\{3(\sqrt{3}\pm 1),3\omega(\sqrt{3}\pm 1),3\omega^2(\sqrt{3}\pm 1)\}$. In these cases $C_\lambda$ has $j$-invariant $1728$ and linear automorphism group $(3^2:3):4$.
        \item Otherwise, $C_\lambda$ has $j$-invariant$\neq 0,1728$ and linear automorphism group $3^2:S_3$.
    \end{enumerate}
    The following fact shows that the automorphism group of a cubic fourfold $X$ with $\Aut^{s}(X)\supset3^{1+4}:2.2$ is determined by the linear automorphism groups of the elliptic curves in its equation.
    \begin{claim}
        If $\lambda\notin \{0,6,6\omega,6\omega^2\}$ and $g\in \GL(6,\CC)$ preserves $F(x_1,\ldots,x_6)=x_1^3+x_2^3+x_3^3+\lambda x_1 x_2 x_3 +x_4^3+x_5^3+x_6^3+\lambda x_4 x_5 x_6$ up to some scalar, then $g$ has the form $\left[\begin{smallmatrix}
            A_1 & 0\\
            0 & A_2
        \end{smallmatrix}\right]$ or $\left[\begin{smallmatrix}
            0 & A_1\\
            A_2 & 0
        \end{smallmatrix}\right]$ of $3\times 3$ blocks.
    \end{claim}
    \begin{proof}
        Using Fact \ref{fact:mat_hess} and by direct calculation, $g$ must preserve $H(x_1,x_2,x_3)H(x_4,x_5,x_6)$ up to some scalar where $H(x_1,x_2,x_3)=x_1^3+x_2^3+x_3^3-\left(\frac{\lambda}{3}+\frac{36}{\lambda^2}\right) x_1 x_2 x_3$. Since $\lambda\notin \{0,6,6\omega,6\omega^2\}$, $H$ is irreducible. By the uniqueness of decomposition, without loss of generality we can assume $g$ preserves $H(x_1,x_2,x_3)$ up to some scalar. Since $H$ is actually smooth (i.e. defines a smooth hypersurface), using Lemma \ref{lem:extra_variable} we deduce the result.
    \end{proof}
    Since once the automorphism group is determined one can directly calculate the symplectic automorphism group, the proof is finished. Note that the first case is just the Fermat cubic fourfold and the second case is just $X_7^{\prime}$ in \cite[6.1]{yang2024automorphism}. See also \cite[Table 1]{yang2024automorphism}.
\end{proof}

\subsubsection{GIT Approach}
One can also directly compute that

\[N(3^{1+4}:2.2)/3^{1+4}:2.2=\left\langle \frac{1}{3}(0,1,0,0,1,0), (12),\diag(A,A)\right\rangle\cong C_{2}:A_{4}.\]

Here $A$ is the Vandermonde matrix given by
\begin{equation*}
    \begin{pmatrix}
        1&1&1\\
        1&\omega &\omega^{2}\\
        1&\omega^{2}&\omega
    \end{pmatrix}.
\end{equation*}

The generator $(12)$ fixes every cubic fourfold in this family. It is a non-symplectic action. The generator $\frac{1}{3}(0,1,0,0,1,0)$ acts by $b=\frac{1}{3}(0,1)$. And $\diag(A,A)$ acts by 
$a=\begin{pmatrix}
    3&1\\
    18&-3
\end{pmatrix}.$

The detectable cubic fourfolds are fixed by either an order-$2$ or an order-$3$ element, corresponding to Fermat cubic fourfolds $X^{\prime}$ and cubic fourfolds $X_{1}$ with maximal symplectic group $3^{1+4}:2.2^{2}$ respectively. Explicitly, Fermat cubic fourfolds are given by equations $[1,0]$, $[1,6]$, $[1,6\omega]$ and $[1,6\omega^{2}]$, fixed by $b,bab,ab,ab^2$. These Fermat cubic fourfolds form a single orbit. And these cubic fourfolds with symplectic group $3^{1+4}:2.2^{2}$ are given by $[1,3(\sqrt{3}\pm1)]$, $[1,3\omega(\sqrt{3}\pm1)]$ and $[1,3\omega^{2}(\sqrt{3}\pm1)]$, fixed by $a,bab^2$ and $b^{2}ab$. They also form a single orbit. Therefore, the non-symplectic index of $\PP \mathcal{V}_{3^{1+4}:2.2}$ is $m=2$.

\subsubsection{Lattice Theoretical Approach}
Repeating the argument in section \ref{subsection:thecaseN72}, one can start from the order-$3$ automorphism of the $3^{4}:A_{6}$-coinvariant lattice $A_{2}(-3)$ to construct an order-3 non-symplectic automorphism on $X^{\prime}$ normalizing $3^{1+4}:2.2$, which confirms the fact that $X^{\prime}$ is detectable.

\subsection{Incidence Relations}
Observe that cubic fourfolds $X^{\prime}$ with maximal symplectic automorphism group can be found in many different $1$-dimensional moduli spaces of cubic fourfolds with symplectic automorphism group with $\rank=19$ coinvariant lattices. One can recognize this phenomenon as that these moduli spaces intersect at the points representing cubic fourfolds with maximal symplectic automorphism. In this section, we aim to detect all such incidence relations between these $1$-dimensional families.

\begin{defn}
    Let $X$ be a smooth cubic fourfold. Let $\mathcal{F}_{1}$ and $\mathcal{F}_{2}$ be two families among the $\rank=19$ cases with generic symplectic automorphism group $G_{1}$ and $G_{2}$. Say $\mathcal{F}_{1}$ and $\mathcal{F}_{2}$ intersect at $X$ if $G_{1}<\Aut^{s}(X)$ and $G_{2}<\Aut^{s}(X)$ both as finite subgroups of $\PGL(6,\CC)$ up to linear conjugation.
\end{defn}

Clearly, if $X$ is an intersection of two $\rank=19$ families $\mathcal{F}_{1}$ and $\mathcal{F}_{2}$, then $X$ must admit maximal symplectic automorphism group. Therefore, the incidence relations are actually the inclusion relations between $\rank=19$ and $\rank={20}$ symplectic automorphism groups inside $\PGL(6,\CC)$.

We first list the inclusions relations as abstract groups to exclude most cases. We form the diagram as follows.

\begin{center}
\begin{tikzpicture}[
    biggroup/.style={
        ellipse,
        draw=blue!80,
        thick,
        fill=blue!10,
        minimum width=1.8cm,  
        minimum height=0.8cm,
        align=center,
        font=\small
    },
    smallgroup/.style={
        ellipse,
        draw=green!80,
        thick,
        fill=green!10,
        minimum width=1.5cm,  
        minimum height=0.7cm,
        align=center,
        font=\small
    },
    connection/.style={
        ->,
        thick,
        black,
        shorten >=3pt,
        shorten <=3pt
    }
]

\node[biggroup] (B1) at (-7, 3) {$3^4:A_6$};
\node[biggroup, minimum width=0.9cm] (B2) at (-4.75, 3) {$A_7$};
\node[biggroup, minimum width=2.1cm] (B3) at (-2, 3) {$3^{1+4}:2.2^2$}; 
\node[biggroup, minimum width=0.9cm] (B4) at (0.75, 3) {$M_{10}$};
\node[biggroup, minimum width=1.5cm] (B5) at (3, 3) {$L_2(11)$};
\node[biggroup, minimum width=1.4cm] (B6) at (5.25, 3) {$A_{3,5}$};

\node[smallgroup] (S1) at (-7, -1) {$3^{1+4}:2.2$};
\node[smallgroup, minimum width=0.9cm] (S2) at (-4.5, -1) {$A_6$};
\node[smallgroup] (S3) at (-2.5, -1) {$L_2(7)$};
\node[smallgroup, minimum width=0.9cm] (S4) at (-0.5, -1) {$S_5$};
\node[smallgroup, minimum width=0.9cm] (S5) at (1.25, -1) {$M_9$};
\node[smallgroup, minimum width=0.9cm] (S6) at (3.25, -1) {$N_{72}$};
\node[smallgroup, minimum width=0.9cm] (S7) at (5.5, -1) {$T_{48}$};

\foreach \i/\j in {1/1,1/2,1/6,2/2,2/3,2/4,3/1,4/2,4/5,6/4} {
    \draw[connection] (S\j) -- (B\i);
}
\draw[connection, dashed] (S5) -- (B3);
\end{tikzpicture}
\end{center}

Then we check their incidence relations at the level of linear or projective representations case by case. If $G_{1}<G_{2}$ as abstract groups, we need to check the representation of $G_{1}$ restricted from $G_{2}$ gives the original $G_1$-representation. To do this, we only need to compare the characters. We refer to \cite[\S 7, \S 8]{KOIKE202512} for these representations. 

Let $A_{7}^{1}$ and $A_{7}^{2}$ be two projective representations of $A_{7}$ corresponding to cubic fourfolds with automorphism group being $A_{7}$ and $S_{7}$, respectively. Let $A_{6}^{1}$ ($S_{5}^{1}$) and $A_{6}^{2}$ ($S_{5}^{2}$) be two representations of $A_{6}$ ($S_{5}$) with the corresponding $\rank=19$ family with generic purely non-symplectic index being $2$ and $1$. For each of the other groups $G$ in the $\rank=19$ or $\rank=20$ case, there is a unique projective representation which is able to act on some smooth cubic fourfolds symplectically. We denote by $G$ their (image of) representation in $\PGL(6,\CC)$ for simplicity.

Except for the natural inclusion relations such as $M_{9}<M_{10}$ and the cases already argued in this section, the remaining cases are as follows:

\begin{enumerate}
    \item $L_{2}(7)<A_{6}^{1}$ and $A_{6}^{2}$, see \cite[\S 4.2]{he2025cubicfourfoldsorder7automorphism}.
    \item $M_{9}$ is not contained in $3^{1+4}:2.2^{2}$ because $M_{9}$ is not liftable to $\GL(6,\CC)$ while $3^{1+4}:2.2^{2}$ is liftable, \cite[\S 7.4, \S 8 (iii)]{KOIKE202512}.
    \item $S_{5}^{2}<A_{7}^{1}$, $S_{5}^{1}<A_{7}^{2}$ by directly comparing the characters, \cite[\S 2, \S 7]{KOIKE202512}.
    \item $S_{5}^{1}$ is not contained in $A_{7}^{1}$ because every smooth cubic fourfold $X$ with $\Aut^{s}(X)=S_{5}^{1}$ admits an extra non-symplectic action of order 2 while the smooth cubic fourfold $X^{\prime}$ with $\Aut^{s}(X^{\prime})=A_{7}^{1}$ admits no non-symplectic action. $S_{5}^{2}$ is not contained in $A_{7}^{2}$, this is also a direct comparison of characters.
\end{enumerate}

The following diagram shows incidence relations between moduli spaces for cases rank$=19,20$. The $G$ and $m$ in the notation $(G,m)$ mean the symplectic automorphism group and non-symplectic index, respectively. Those hollow points are cubic fourfolds with rank $20$, while those rigid points are cubic fourfolds with rank $19$ and strictly larger non-symplectic index than the generic index of the family.

\begin{center}
\begin{tikzpicture}
\draw[rounded corners=10pt] (2,0) -- (6,0) -- (14,-3.2);
\draw[rounded corners=10pt] (2,-1.6) -- (6,-1.6) -- (10,0) -- (14,0);
\draw[rounded corners=10pt] (2,-3.2) -- (6,-3.2) -- (12,-5.6) -- (14,-4.8);
\draw[rounded corners=10pt] (2,-4.8) -- (6,-4.8) -- (14,-1.6);
\draw[rounded corners=10pt] (2,-5.6) -- (10,-5.6) -- (12,-4.8) -- (14,-5.6);

\draw (2,-0.8) -- (14,-0.8);
\draw (2,-2.4) -- (14,-2.4);
\draw (2,-4.0) -- (14,-4.0);
\draw (2,-6.4) -- (10,-6.4);

\node[left] at (2,0) {$(A_6,2)$};
\node[left] at (2,-0.8) {$(3^{1+4}:2.2,2)$};
\node[left] at (2,-1.6) {$(N_{72},2)$};
\node[left] at (2,-2.4) {$(S_5,2)$};
\node[left] at (2,-3.2) {$(A_6,1)$};
\node[left] at (2,-4.0) {$(S_5,1)$};
\node[left] at (2,-4.8) {$(L_2(7),1)$};
\node[left] at (2,-5.6) {$(M_9,1)$};
\node[left] at (2,-6.4) {$(T_{48},1)$};
\node[circle, draw=black, fill=white, inner sep=2pt, label={[label distance=4.5pt]above:{$(3^{4}:A_6,6) $}}] at (8,-0.8) {};
\node[circle, draw=black, fill=white, inner sep=2pt, label={[label distance=3pt]above:{$(A_7,1) $}}] at (8,-4.0) {};
\node[circle, draw=black, fill=white, inner sep=2pt, label={[label distance=3pt]above:{$(A_7,2) $}}] at (12,-2.4) {};
\node[circle, draw=black, fill=white, inner sep=2pt, label={[label distance=3pt]above:{$(M_{10},1)^+ $}}] at (11,-5.2) {};
\node[circle, draw=black, fill=white, inner sep=2pt, label={[label distance=3pt]above:{$(M_{10},1)^- $}}] at (13,-5.2) {};
\node[circle, draw=black, fill=white, inner sep=2pt, label={[label distance=-1pt]above:{$(3^{1+4}:2.2^2,4) $}}] at (4,-0.8) {};
\node[circle, draw=black, fill=white, inner sep=2pt, label={[label distance=-1pt]above:{$(A_{3,5},6) $}}] at (4,-2.4) {};
\node[circle, draw=black, fill=white, inner sep=2pt, label={[label distance=-1pt]above:{$(L_2(11),3) $}}] at (12,-6.4) {};
\node[circle, fill, inner sep=2pt, label={[label distance=-1pt]above:{$(L_2(7),2)^+ $}}] at (3,-4.8) {};
\node[circle, fill, inner sep=2pt, label={[label distance=-1pt]above:{$(L_2(7),2)^- $}}] at (5.3,-4.8) {};
\node[circle, fill, inner sep=2pt, label={[label distance=-1pt]above:{$(M_9,3) $}}] at (4.15,-5.6) {};

\end{tikzpicture}
\end{center}

For completeness let us give the proof of Theorem \ref{thm:19_class}.

\begin{proof}[Proof of Theorem \ref{thm:19_class}]
    From \cite[Theorem 1.2]{laza2022automorphisms}, $\Aut^s(X)$ is isomorphic to one of listed groups.
    \begin{enumerate}
        \item If $\Aut^s(X)\cong 3^{1+4}:2.2$, Proposition \ref{prop:3^1+4:2.2} gives the result.
        \item If $\Aut^s(X)\cong A_6$, \S \ref{subsect:A_6} gives the result.
        \item If $\Aut^s(X)\cong L_2(7)$, \cite[Theorem 1.2]{he2025cubicfourfoldsorder7automorphism} gives the result.
        \item If $\Aut^s(X)\cong S_5$, \S \ref{subsection:rank19S5}  gives the result.
        \item If $\Aut^s(X)\cong M_9$, \S \ref{subsect:M_9} gives the result.
        \item If $\Aut^s(X)\cong N_{72}$, \S \ref{subsection:thecaseN72} gives the result.
        \item If $\Aut^s(X)\cong T_{48}$, the YYZ bound is $1$, thus $\Aut(X)$ must be $T_{48}$ as well. In fact $T_{48}$ is a maximal automorphism group.
    \end{enumerate}
\end{proof}

\section{Some Results for Small Ranks}\label{sect:small_rank}

For very small $G$'s, it usually happens that $\Aut(X)$ must be abelian if the YYZ bounds are reached. The maximal abelian groups acting faithfully on smooth cubic fourfolds are classified in \cite{mayanskiy2013abelianautomorphismgroupscubic} (see also \cite{zheng2021liftableabelian} for a much shorter proof for liftable case). In fact, we have the following result:
\begin{prop}\label{prop:32&48}
    Suppose $X$ is a smooth cubic fourfold whose automorphism group contains an abelian subgroup of order $32$ or $48$, then $X$ must be linearly equivalent to one of the following four smooth cubic fourfolds:
    \begin{longtable}{|c|c|c|}
        \hline
            Defining Polynomial & $\Aut^s(X)$ & $\Aut(X)$ \\
            \hline
            $x_1^3+x_1x_2^2+x_2x_3^2+x_3x_4^2+x_4x_5^2+x_5x_6^2$  & $1$ & $32$\\
            \hline
            $x_1^3+x_2^3+x_2x_3^2+x_3x_4^2+x_4x_5^2+x_5x_6^2$  & $1$ & $48$\\ 
            \hline
            $x_1^3+x_1x_2^2+x_3^3+x_3x_4^2+x_4x_5^2+x_5x_6^2$ & $2$ & $2\times 24$\\
            \hline
            $x_1^3+x_1x_2^2+x_2x_3^2+x_4^3+x_4x_5^2+x_5x_6^2$ & $|\Aut^s(X)|=1944$ & $|\Aut(X)|=7776$\\
            \hline
        \caption{Smooth cubic fourfolds whose automorphism group contains abelian subgroup of order $32$ or $48$}
        \label{table:32&48}
    \end{longtable}
\end{prop}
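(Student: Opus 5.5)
Start from the classification of maximal abelian groups acting faithfully on smooth cubic fourfolds in \cite{mayanskiy2013abelianautomorphismgroupscubic}. Let $A<\Aut(X)$ be abelian of order $32$ or $48$. Then $A$ lies in a maximal abelian subgroup $A_{\max}$ of $\Aut(X)$, and the list gives $A_{\max}$ together with a normal form for $X$. A finite abelian subgroup of $\PGL(6,\CC)$ need not lift to a diagonalizable group, so I would handle two cases.

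\textbf{Liftable case.} If $A$ lifts to $\GL(6,\CC)$, it can be diagonalized. Then $F$ is a sum of monomials that are semi-invariant with a common character, and by the argument of \cite{zheng2021liftableabelian} smoothness forces $F$ to contain, for each variable $x_i$, a monomial $x_i^3$ or $x_i^2x_j$. This gives a ``graph'' structure: a disjoint union of chains and loops of the shape $x_1^3+x_1x_2^2+\cdots$ or $x_1^2x_2+\cdots$, as in Table \ref{table:32&48}. The order of the diagonal automorphism group of such a polynomial (modulo scalars) is computable from the chain lengths. A chain $x_1^3+x_1x_2^2+\cdots+x_{k-1}x_k^2$ contributes a factor $2^{k-1}$, the Fermat-type piece $x_1^3$ contributes a factor $3$, and loops contribute factors like $2^k-(-1)^k$, which involve primes other than $2,3$. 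I would enumerate all partitions of $6$ variables into such components. Requiring $32$ or $48$ to divide the order of the diagonal group (after quotienting by $\mu_3$) should leave exactly the four polynomials in the table; in particular a $2$-part of $16\cdot 2$ forces chains of total length summing to enough $2$-power.

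\textbf{Non-liftable case.} If $A$ does not lift, its preimage in $\SL(6,\CC)$ is a nonabelian central extension by $\mu_3$ or $\mu_6$. The order-$2$ part of $A$ always lifts, since the obstruction lies in $H^2(A,\mu_3)$, which is $3$-primary. So the obstruction lives in the Sylow $3$-subgroup, and this case only concerns order $48=16\cdot 3$. Then $A\cong 2^4\times 3$ or $4\times 4\times 3$, etc., which has cyclic $3$-part; cyclic groups always lift. Hence $A$ lifts, and this case is vacuous. Making this cohomological point precise is the first thing I would check.

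\textbf{Computing $\Aut^s(X)$ and $\Aut(X)$.} For each of the four polynomials I would compute the full automorphism group. The diagonal part is found directly. To show there are no non-monomial automorphisms, apply Fact \ref{fact:mat_hess} and Lemma \ref{lem:extra_variable}, or simply quote \cite{yang2024automorphism}, whose Table 1 lists these fourfolds. The last polynomial is a sum of two copies of $x_1^3+x_1x_2^2+x_2x_3^2$ and has a large group, order $7776$; there I would cite \cite{yang2024automorphism} directly. The symplectic part is read off from the character on $H^{3,1}$, which by Griffiths residues is given by $\det(g)$ divided by the scalar on $F^2$.

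\textbf{Main obstacle.} I expect the hardest step to be the exhaustive enumeration in the liftable case: making sure no other chain/loop configuration, with possibly nondiagonal extra symmetries that enlarge the abelian group, reaches order $32$ or $48$. I would handle it by relying on the maximal abelian list of \cite{mayanskiy2013abelianautomorphismgroupscubic}, checking each entry's order for divisibility.
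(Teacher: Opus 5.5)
Your proposal is essentially the paper's proof. The classification is taken wholesale from \cite{mayanskiy2013abelianautomorphismgroupscubic}. The first, second and fourth groups are read off from \cite{yang2024automorphism} ($X_8'$, $X_5'$, $X_7'$). The third is handled by the Hessian $4(3x_1^2-x_2^2)\cdot q(x_3,\dots,x_6)$ together with Lemmas~\ref{lem:dummy_crit} and~\ref{lem:extra_variable}, which force a $2+4$ block form. The paper proves this third case directly rather than by citation, so your ``simply quote'' fallback is not available there and the Hessian argument must actually be carried out.

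Your liftability discussion and chain/loop enumeration are superfluous once you invoke Mayanskiy. The liftability part is also misjustified: $H^2(\ZZ/3,\mu_3)\cong\ZZ/3\neq 0$, and a non-split extension can still be abelian (for example $\ZZ/9$ over $\ZZ/3$). The correct reason for diagonalizability is that commutators of lifts are scalars in $\mu_3$. The commutator pairing is therefore trivial on $2$-elements and on the cyclic $3$-part, so the preimage of $A$ is abelian.
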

\begin{proof}
    Firstly by \cite[Corollary 1, Theorem 3 and Theorem 4]{mayanskiy2013abelianautomorphismgroupscubic}, smooth cubic fourfolds whose automorphism group contains an abelian subgroup of order $32$ or $48$ must be linearly equivalent to one of the above $4$ smooth cubic fourfolds. We just have to determine their (symplectic)automorphism groups.
    
    The first and the second are exactly $X_8^{\prime}$ and $X_5^{\prime}$ given in \cite[6.1]{yang2024automorphism}, and it is shown that
\[
(\Aut^s(X_8^{\prime}),\Aut(X_8^{\prime}))=(1,32), (\Aut^s(X_5^{\prime}),\Aut(X_5^{\prime}))=(1,48).
\]

    Let $X$ be the smooth cubic fourfold defined by $\{F=x_1^3+x_1x_2^2+x_3^3+x_3x_4^2+x_4x_5^2+x_5x_6^2=0\}$. We only have to show its automorphism group is generated by $\frac{1}{6}(2,-1,0,0,0,0)$ and $\frac{1}{8}(0,0,0,4,2,1)$. Assume $g\in\GL(6,\CC)$ preserves $F$. Using Fact \ref{fact:mat_hess}, $g$ preserves $(3x_1^3-x_2^2)(3x_3^2 x_6^2-3x_3^2x_4x_5+3x_3x_5^3+x_4^3x_5-3x_4^2x_6^2)$ up to scalar. Using Lemma \ref{lem:dummy_crit} and Lemma \ref{lem:extra_variable} we deduce that $g$ is of the form $\left(\begin{smallmatrix}
            A_1 & \\
             & A_2
        \end{smallmatrix}\right)$ where $A_1$ is $2\times2$ and $A_2$ is $4\times 4$. All possible $A_1$ can be directly calculated since it preserves $x_1^3+x_1x_2^2=x_1(x_1+\sqrt{-1}x_2)(x_1-\sqrt{-1}x_2)$; all possible $A_2$ can be seen from the automorphism group of $X_3^{\prime}$ in \cite[6.1]{yang2024automorphism}.

    For the smooth cubic fourfold defined by $\{x_1^3+x_1x_2^2+x_2x_3^2+x_4^3+x_4x_5^2+x_5x_6^2=0\}$, it is just the second case of Proposition \ref{prop:3^1+4:2.2}.
\end{proof}

\subsection{Rank $0$, $G$ Trivial}
Proposition \ref{prop:32&48} implies there are smooth cubic fourfolds realizing non-symplectic indices $48$ and $32$.

Using Corollary \ref{cor:reduce} we deduce that all divisors of $32$ and $48$, except $1$ and $3$, are admissible for $G$. However, generic smooth cubic fourfold has trivial automorphism group.

Moreover, the pair $(\Aut^{s}(X),\Aut(X))=(1,3)$ can also be realized: let $X$ be the smooth cubic fourfold defined by equation $F=L_{3}(x_{0},x_{1},x_{2},x_{3},x_{4})+x_{5}^{3}$, where $L_{3}$ is a generic degree-$3$ homogeneous polynomial. Clearly, $X$ admits a non-symplectic automorphism $\frac{1}{3}(0,0,0,0,0,1)$ of order $3$. Using \cite[Proposition 3.11]{yang2024automorphism}, we deduce that $\Aut^s(X)=1$ and $\Aut(X)=3$.

Note that in this case $\Aut(X)$ must be cyclic, we have
\begin{prop}\label{prop:sym_trivial}
    If $\Aut^s(X)$ is trivial, then all possible $\Aut(X)$ are those cyclic groups with order dividing $32$ or $48$.
\end{prop}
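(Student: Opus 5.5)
When $\Aut^s(X)$ is trivial, the exact sequence $1\to\Aut^s(X)\to\Aut(X)\to\mu_m\to 1$ from \S\ref{subsect:moduli-dim} gives $\Aut(X)\cong\mu_m$, which is cyclic of order $m=m(X)$. The statement therefore reduces to two claims about the integer $m$. The first is that $m$ must divide $32$ or $48$. The second is that every divisor of $32$ or $48$ actually occurs as $m(X)$ for some $X$ with $\Aut^s(X)=1$.

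For the upper bound, I will not use the YYZ bound for $G=1$ alone. Instead I argue directly. Choose $g\in\Aut(X)$ that generates the cyclic group. By \cite[Proposition 3.5]{Zheng2020OnAA}, $\ord(g)=m$ divides one of $21,30,32,33,36,48$. By Proposition \ref{prop:2k3l}, $m=2^k3^l$ with $l\in\{0,1\}$. Combining these, the divisors of $21,30,33,36$ that have this form are $1,2,3,4,6,12$, and each already divides $48$. Hence $m$ divides $32$ or $48$.

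For existence, I will collect the realizations already established in this section.
\begin{itemize}
\item $m=1$: take a generic cubic fourfold.
\item $m=3$: take $F=L_3(x_0,\dots,x_4)+x_5^3$.
\item $m=32$ and $m=48$: take the first two rows of Table \ref{table:32&48}, i.e.\ Proposition \ref{prop:32&48}.
\item $m\in\{2,4,8,16\}$: apply Corollary \ref{cor:reduce}(1) to $m=32$.
\item $m\in\{6,12,24\}$ and $m\in\{2,4,8,16\}$: apply Corollary \ref{cor:reduce}(2) to $48=2^4\cdot 3$, which yields $2^3\cdot3,\,2^2\cdot3,\,6$ and $2^4,\dots,2$.
\end{itemize}
The two-element list $\{1,3\}$ together with the divisors above covers every divisor of $32$ and of $48$. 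In each case the symplectic group stays trivial, because Corollary \ref{cor:reduce} is a statement about admissibility for the fixed group $G=1$. So $\Aut(X)$ is cyclic of the required order.

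The main obstacle is the appeal to Corollary \ref{cor:reduce}. It rests on Lemma \ref{lem:dim_reduction} and Proposition \ref{prop:ind_red}, and for the steps $4\to2$ and $6\to2$ it also needs $\dim(\Lambda_{0,\CC})^\chi>1$. Both chains here use parts (1) and (2) of the corollary with $k\ge 3$ and $k\ge 2$ respectively, and these parts do not impose that extra hypothesis. I will still check explicitly that the descent reaches index $2$ and $6$, which part (2) includes. If any step is in doubt, I would fall back on explicit diagonal examples: for instance $x_1^3+\dots+x_5^3+$ generic terms invariant under $\frac12(0,\dots,0,1)$. For these I would verify $\Aut^s=1$ using \cite[Proposition 3.11]{yang2024automorphism}.
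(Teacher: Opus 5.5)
Your proof is correct. The existence half is the paper's argument:
\begin{itemize}
\item a generic cubic for $m=1$;
\item $L_3(x_0,\dots,x_4)+x_5^3$ with \cite[Proposition 3.11]{yang2024automorphism} for $m=3$;
\item the first two cubics of Proposition~\ref{prop:32&48} for $m=32,48$;
\item Corollary~\ref{cor:reduce}(1),(2) for the remaining divisors.
\end{itemize}
Your concern about the hypothesis $\dim(\Lambda_{0,\CC})^\chi>1$ is harmless, as you suspected. In parts (1) and (2), the index-$4$ or index-$6$ realization comes from an earlier descent step with $m/p\neq 2$. Lemma~\ref{lem:dim_reduction} therefore already makes its moduli positive-dimensional, and the diagonal fallback is unnecessary.

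The real difference is the upper bound. In the paper, ``$m$ divides $32$ or $48$'' is not argued in the text around the proposition. It is read off from the YYZ bound for $G=1$ (Proposition~\ref{prop:YYZ_bounds}), which is a GAP enumeration over subgroups of the maximal groups of \cite{yang2024automorphism}. You instead note that $\Aut(X)\cong\mu_m$ is generated by a single automorphism of order $m$. You then combine \cite[Proposition 3.5]{Zheng2020OnAA} with Proposition~\ref{prop:2k3l}: the divisors of $21,30,33,36$ of the form $2^k3^l$ with $l\le 1$ are $1,2,3,4,6,12$, all of which divide $48$.

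Your route is computer-free. In fact it does not even need $G=1$: picking $g$ with $\chi(g)$ generating $\mu_m$ gives $m\mid\ord(g)$, so the same reasoning shows $m\mid 32$ or $m\mid 48$ for every $G$. For $G=1$ this universal bound is already sharp, so the GAP computation is not needed. Its real value lies in the sharper, $G$-dependent bounds recorded in Table~\ref{table: main} for nontrivial $G$.
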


\subsection{Rank $8$, $G= 2$}

From \cite[3.1(i)]{KOIKE202512}, there is exactly one connected family consisting of smooth cubic fourfolds with defining equation of the form
\[
    L(x_1, x_2, x_3, x_4) + x_5^2 L_1(x_1, x_2, x_3, x_4) + x_5 x_6 L_2(x_1, x_2, x_3, x_4) + x_6^2 L_3(x_1, x_2, x_3, x_4)
\]
and $\frac{1}{2}(0,0,0,0,1,1)$ acts symplectically on it.

Let $X$ be a smooth cubic fourfold such that $\Aut^s(X)\cong G$. Since $G\lhd \Aut(X)$, $G$ lies in the center of $\Aut(X)$. Using the following elementary fact, we conclude that $\Aut(X)$ is abelian.

\begin{fact}
    If a group $H$ satisfies that $H/Z(H)$ is cyclic, then $H$ is abelian.
\end{fact}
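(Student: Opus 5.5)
The statement is the elementary group-theoretic fact: if $H/Z(H)$ is cyclic, then $H$ is abelian. I will prove it directly from a generator of the quotient.

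Choose $g\in H$ whose image generates the cyclic group $H/Z(H)$. Every coset of $Z(H)$ is then of the form $g^kZ(H)$ for some integer $k$. Hence every element $x\in H$ can be written as $x=g^a z$ with $a\in\ZZ$ and $z\in Z(H)$.

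Now take two arbitrary elements $x=g^a z$ and $y=g^b w$, with $z,w\in Z(H)$. Since $z$ and $w$ commute with everything, and powers of $g$ commute with each other, we compute
\[
xy=g^a z g^b w=g^{a+b}zw=g^{b+a}wz=g^b w g^a z=yx.
\]
Thus any two elements of $H$ commute, so $H$ is abelian.

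There is no real obstacle here. The only point requiring care is the first step, namely that cyclicity of the quotient lets every element be written as a power of the single element $g$ times a central element. The rest is bookkeeping of commutations.

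In the application, $\Aut^s(X)\cong G$ has order $2$. It is normal in $\Aut(X)$, and a normal subgroup of order $2$ is central, so $G\subset Z(\Aut(X))$. By the exact sequence $1\to\Aut^s(X)\to\Aut(X)\to\mu_m\to1$, the group $\Aut(X)/G\cong\mu_m$ is cyclic. Its further quotient $\Aut(X)/Z(\Aut(X))$ is therefore also cyclic. Applying the fact to $H=\Aut(X)$ shows that $\Aut(X)$ is abelian.
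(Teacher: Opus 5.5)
Your proof is correct and is the standard argument; the paper gives no proof of its own, since it treats this as an elementary fact. Your application also matches how the paper uses it: $\Aut^s(X)$ has order $2$ and is normal, hence central. So $\Aut(X)/Z(\Aut(X))$ is a quotient of the cyclic group $\Aut(X)/\Aut^s(X)\cong\mu_m$, which gives that $\Aut(X)$ is abelian.
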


The next proposition shows that the symplectic group $G=2$ admits non-symplectic index $3$:
\begin{prop}\label{prop:(2,3)}
    The family of cubic fourfolds with defining polynomial invariant under $g=\frac{1}{6}(0,0,0,3,3,2)\in \GL(6,\CC)$ is generically smooth and generically has automorphism group $\langle[g]\rangle$.
\end{prop}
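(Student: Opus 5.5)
The plan is to compute the family explicitly, show it contains smooth members, and then bound the automorphism group of a generic member by splitting off the variable $x_6$. Throughout, $g=\diag(1,1,1,-1,-1,\omega)$. A cubic monomial is $g$-invariant exactly when its weights with respect to $(0,0,0,3,3,2)$ sum to $0 \bmod 6$. Enumerating the monomials, the invariant cubics are exactly
\[
F=C(x_1,x_2,x_3)+\sum_{i=1}^3 x_i\,Q_i(x_4,x_5)+c\,x_6^3 ,
\]
where $C$ is a ternary cubic and the $Q_i$ are binary quadrics. Since $c\neq 0$ for smooth members, we may normalize $c=1$. So $F=L(x_1,\dots,x_5)+x_6^3$, where $L$ lies in the space of cubic threefolds invariant under the involution $\diag(1,1,1,-1,-1)$.

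\textbf{Step 1: smoothness.} I would exhibit one explicit smooth member, for instance $x_1^3+x_2^3+x_3^3+x_1x_4^2+x_2x_5^2+x_6^3$ (or a slight perturbation of it), and check directly that the partial derivatives have no common zero. Generic smoothness then follows, because smoothness is an open condition on the irreducible parameter space.

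\textbf{Step 2: $\langle[g]\rangle$ acts with non-symplectic index $3$ and $\Aut^s\supseteq 2$.} The element $g^3=\frac12(0,0,0,1,1,0)$ is an involution of the shape in \cite[3.1(i)]{KOIKE202512}, with $x_4,x_5$ playing the roles of $x_5,x_6$. It is therefore symplectic. The element $g^2=\frac13(0,0,0,0,0,1)$ is the order-$3$ non-symplectic automorphism already used in the rank-$0$ example. Hence $[g]$ has order $6$, and $\langle[g]\rangle\cap\Aut^s(X)=\langle[g^3]\rangle$.

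\textbf{Step 3: the generic automorphism group is exactly $\langle[g]\rangle$.} Here I would invoke \cite[Proposition 3.11]{yang2024automorphism}, the same result used for $L_3+x_5^3$ in the rank-$0$ case. Its conclusion should reduce $\Aut(X)$ for $X=\{L+x_6^3=0\}$ to $\Aut(L)\times\mu_3$, acting on $(x_1,\dots,x_5)$ and on $x_6$ respectively. It then remains to show that a generic cubic threefold $L=C+\sum x_iQ_i$ in this family has $\Aut(L)=\langle\diag(1,1,1,-1,-1)\rangle$.

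For this I would use a parameter count combined with Proposition \ref{prop:GL_family}. The relevant centralizer is $\GL_3\times\GL_2$, of dimension $13$. The space of such $L$ has dimension $10+9=19$, so the moduli of these threefolds has dimension $6$. Any strictly larger finite group $\mathcal G'\supsetneq\langle\diag(1,1,1,-1,-1)\rangle$ that acted on all members would force the whole family to be linearly equivalent to $\mathcal G'$-invariant cubics. I would rule this out in one of two ways. The first is to find a single member, for example a suitable perturbation of the Step 1 equation, whose automorphism group can be computed via Fact \ref{fact:mat_hess} and Lemmas \ref{lem:extra_variable}, \ref{lem:dummy_crit}: the Hessian splits the variables into the blocks $\{x_1,x_2,x_3\}$ and $\{x_4,x_5\}$, after which one computes within the blocks. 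The second is to compare with the known generic automorphism group $2$ of cubic threefolds carrying such an involution.

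\textbf{Main obstacle.} I expect Step 3 to be the hard part: one must certify that no hidden extra automorphism persists on the whole family. My first attempt would be the explicit Hessian computation on a chosen member, since equality on one smooth member suffices by upper semicontinuity of $\Aut$.
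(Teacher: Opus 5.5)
Your enumeration of the invariant cubics, the normalization $c=1$, and the smoothness of your Step~1 member are all correct. The logical frame of Step~3 is also sound: by Proposition~\ref{prop:GL_family}, exhibiting one smooth member with automorphism group exactly $\langle[g]\rangle$ would suffice.

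\textbf{The gap.} That decisive step is never carried out, and the mechanism you propose for it does not work.

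\emph{Your Step~1 member is the Fermat cubic.} Write $x_1^3+x_1x_4^2=x_1(x_1+\sqrt{-1}x_4)(x_1-\sqrt{-1}x_4)$. This is a binary cubic with distinct roots, hence equivalent to $u^3+v^3$, and the same holds for $x_2^3+x_2x_5^2$. So this member has automorphism group $3^5:S_6$. Its Hessian determinant is $36\,x_3x_6(12x_1^2-4x_4^2)(12x_2^2-4x_5^2)$, which splits along $\{x_1,x_4\}$ and $\{x_2,x_5\}$. These blocks mix the two eigenspaces of $g^3$.

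\emph{For a genuine member the Hessian does not split.} Take $L=C+\sum x_iQ_i(x_4,x_5)$. The Hessian of $L$ generically has nonzero components of $(x_4,x_5)$-degree $0$ and $2$. So it is not bihomogeneous, and cannot be a product of a form in $x_1,x_2,x_3$ and a form in $x_4,x_5$. Fact~\ref{fact:mat_hess} with Lemmas~\ref{lem:extra_variable} and \ref{lem:dummy_crit} therefore does not give the block-diagonal shape you need. Computing the full automorphism group of a specific non-Fermat member remains an open-ended task.

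\emph{The fallback assumes the claim.} Quoting a ``known'' generic automorphism group for cubic threefolds with such an involution is exactly what has to be proved.

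\emph{The cyclic-cover splitting needs its hypothesis checked.} $\Aut(X)=\Aut(L)\times\mu_3$ cannot hold unconditionally, since it fails for the Fermat member. Whatever hypothesis \cite[Proposition 3.11]{yang2024automorphism} carries must therefore be verified for $L$ in this special $6$-dimensional subfamily. Knowing it for a generic cubic threefold is not enough.

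\textbf{The missing idea.} The paper cuts the infinitely many conceivable overgroups down to a finite list using a special member, then kills each one by a dimension count.

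Let $\mathcal G_0=\langle g,\zeta_3 I\rangle$, and suppose the generic automorphism group were larger. Proposition~\ref{prop:GL_family} then gives $\mathcal G'\supsetneq\mathcal G_0$ such that every member is linearly equivalent to a $\mathcal G'$-invariant cubic. Apply this at your Step~1 polynomial $F_1$, which is the Fermat cubic in these coordinates. You may then take $\mathcal G'=\langle\mathcal G_0,h\rangle$ with $h$ in the finite set $\Aut(F_1)\setminus\mathcal G_0$.

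For each such $h$ one checks that $\dim P^{\mathcal G'}_{6,3}-\dim C_{\GL(6,\CC)}(\mathcal G')\le 5$. For $\mathcal G_0$ this number is $20-14=6$, which gives the contradiction.

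This argument works directly on the fourfold. It needs neither the reduction to the cubic threefold nor a complete automorphism computation for any member. Since your Step~1 member already is the Fermat cubic, it fits your setup without change.
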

\begin{proof}
    Since $g$ is conjugate to $g'=\left[\begin{smallmatrix}
        1\\
        & & 1\\
        & 1\\
        & & & & 1\\
        & & & 1\\
        & & & & & \zeta_3
    \end{smallmatrix}\right]$, we only have to show the proposition for $g'$. Let $\mathcal{G}_0=\langle g',\zeta_3 I\rangle$. Assume the proposition is not true, we know that all of elements in $P_{6,3}^{\mathcal{G}_0,\mathrm{sm}}$ has automorphism group larger than $\mathcal{G}_0$. From Proposition \ref{prop:GL_family} we know that there is some $\mathcal{G}'\supsetneq \mathcal{G}_0$ such that any element in $P_{6,3}^{\mathcal{G}_0,\mathrm{sm}}$ is linearly equivalent to some $\mathcal{G}'$-invariant polynomial. Applying this to the Fermat polynomial $F_0=x_1^3+x_2^3+x_3^3+x_4^3+x_5^3+x_6^3\in P_{6,3}^{\mathcal{G}_0,\mathrm{sm}}$, we can assume $\mathcal{G'}$ is a subgroup of $\Aut(F_0)$, and further choose any $h\in \mathcal{G}'\backslash \mathcal{G}_0$ and replace $\mathcal{G}'$ by $\langle \mathcal{G}_0,h\rangle$.

    Anyway, for arbitrary $\mathcal{G}<\GL(6,\CC)$ such that $P_{6,3}^{\mathcal{G},\mathrm{sm}}$ is nonempty, the dimension of the family of smooth cubic fourfolds whose defining polynomial is invariant under $\mathcal{G}$ equals $\dim P_{n,d}^{\mathcal{G}}-\dim C_{\GL(6,\CC)}(\mathcal{G})$. This implies
    \[
    \dim P_{n,d}^{\mathcal{G}'}-\dim C_{\GL(6,\CC)}(\mathcal{G}')=\dim P_{n,d}^{\mathcal{G}_0}-\dim C_{\GL(6,\CC)}(\mathcal{G}_0=6.
    \]
    However, one can check case by case that for any $h\in \Aut(F_0)\backslash \mathcal{G}_0$, $\mathcal{G}'=\langle \mathcal{G}_0,h\rangle$ satisfies $\dim P_{n,d}^{\mathcal{G}'}-\dim C_{\GL(6,\CC)}(\mathcal{G}')\leq 5$, which leads a contradiction.
\end{proof}

\begin{prop}
    For smooth cubic fourfolds $X$ with $\Aut^s(X)=2$, all possible non-symplectic indices $m(X)$ are those factors of $24$.
\end{prop}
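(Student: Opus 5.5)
We need both an upper bound (every index divides $24$) and existence (every divisor of $24$ is realized). Throughout, let $X$ be smooth with $\Aut^s(X)=G\cong 2$ and write $m=m(X)$.

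\emph{Upper bound.} As shown just above, $G$ is central and $\Aut(X)/G\cong\mu_m$ is cyclic, so $\Aut(X)$ is abelian of order $2m$. By Proposition \ref{prop:2k3l}, $m=2^k3^l$ with $l\le 1$. If $m\nmid 24$, then $16\mid m$, so $|\Aut(X)|\geq 32$ with $32\mid|\Aut(X)|$. Since $\Aut(X)$ is abelian, it then contains an abelian subgroup of order $32$. Proposition \ref{prop:32&48} therefore forces $X$ to be one of the four listed cubic fourfolds. None of these has $\Aut^s(X)\cong 2$ with $\Aut(X)$ abelian of order $2m$, $16\mid m$:
\begin{itemize}
\item the first two have trivial $\Aut^s$;
\item the fourth has $|\Aut^s|=1944$;
\item the third has $\Aut^s=2$ but $|\Aut(X)|=48$, so $m=24$.
\end{itemize}
Hence $m\mid 24$. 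As a cross-check, the YYZ bound of Proposition \ref{prop:YYZ_bounds} for $G=2$ should also be consistent with $24$.

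\emph{Existence.} The third fourfold in Table \ref{table:32&48} realizes $m=24$, and Proposition \ref{prop:(2,3)} realizes $m=3$.
\begin{itemize}
\item Since $24=2^3\cdot 3$, Corollary \ref{cor:reduce}(2) with $k=3$ gives admissibility of $12$ and $6$, and of $8$, $4$, $2$.
\item For $m=1$, take a generic member of the unique $G=2$ family (rank $8$). It suffices to show that its generic non-symplectic index is $1$. I would check this via Corollary \ref{cor:gen12}, verifying that $q_S\not\cong-(3^{+1}\oplus q_T)$ for this family. Here $S=E_8(2)$, so $q_S=2^{+8}$ has no $3$-part; if $q_S\cong -(3^{+1}\oplus q_T)$, then $A_S$ would contain a $\ZZ/3$ summand, a contradiction.
\end{itemize}
Alternatively, since the moduli has dimension $12>0$, a dimension count showing that the loci with extra automorphisms are proper also gives $m=1$.

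\emph{Main obstacle.} The delicate point is applying Corollary \ref{cor:reduce}(2) correctly. The reductions must preserve $\Aut^s=G$ exactly, not merely $\Aut^s\supseteq G$. That corollary is stated for admissibility with fixed $G$, so this is covered, but its hypotheses need checking. Its proof uses Lemma \ref{lem:dim_reduction}, which requires $m/p\neq 1,3$. The steps $24\to 12\to 6$ and $24\to 8\to 4\to 2$ all avoid $m/p\in\{1,3\}$, so I expect no further work there. The remaining care is in confirming the discriminant form $q_S=2^{+8}$ for the $G=2$ coinvariant lattice, which I would take from \cite{hohn2016290}.
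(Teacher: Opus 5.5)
Your proof is correct and follows the paper's argument. You realize $24$ and $3$ via the third fourfold of Proposition~\ref{prop:32&48} and Proposition~\ref{prop:(2,3)}, descend with Corollary~\ref{cor:reduce}(2), get $1$ from Corollary~\ref{cor:gen12}, and use Proposition~\ref{prop:32&48} plus the commutativity of $\Aut(X)$ to exclude $16\mid m$. The one difference is the a priori bound: the paper starts from the GAP-computed YYZ bounds $16,24$ and only has to exclude $16$, whereas you use Proposition~\ref{prop:2k3l} to see that $m\nmid 24$ forces $16\mid m$, which makes your upper bound independent of that computer calculation.
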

\begin{proof}
    Proposition \ref{prop:32&48} implies there is some smooth cubic fourfold realizing non-symplectic index $24$, and the index $16$ cannot be realized.

    From Proposition \ref{prop:(2,3)}, index $3$ can be realized.
        
    Using Corollary \ref{cor:reduce} and Corollary \ref{cor:gen12} we conclude the result.
\end{proof}

\begin{rmk}
    We will show in the upcoming papers that, the family of smooth cubic fourfolds invariant under $g=\frac{1}{6}(0,0,0,3,3,2)$ realizes index $3$ for $G=2$.
\end{rmk}

\section{Main Table}\label{sect:main_table}
    The following table includes known indices and other related information. 

{\footnotesize
\begin{longtable}{|c|c|c|p{.07\textwidth}|p{.11\textwidth}|p{.065\textwidth}|p{.067\textwidth}|c|c|c|}
\hline
$r(S)$ & $G=\Aut^s$ & $\ord(G)$ & liftable or not & all possible indices & generic index & YYZ bounds & $q_S$ & $q_T$ & $i_S$\\\hline
$20$ & $3^4:A_6$ & $29160$ & Yes & $6$ & $6$ & $6$ & $3^{+2}9^{-1}$ & $3^{+1}9^{+1}$ & $1$\\\hline
\multirow{2}{*}{$20$} & \multirow{2}{*}{$A_7$} & \multirow{2}{*}{$2520$} & No & $1$ & $1$ & \multirow{2}{*}{$2$} & \multirow{2}{*}{$3^{-1}5^{+1}7^{-1}$} & $3^{-2}5^{+1}7^{+1}$ & \multirow{2}{*}{$1$}\\\cline{4-6}\cline{9-9}
 &  &  & Yes & $2$ & $2$ &  &  & $5^{+1}7^{+1}$ & \\\hline
$20$ & $3^{1+4}:2.2^2$ & $1944$ & Yes & $4$ & $4$ & $4$ & $2_6^{+2}3^{-3}$ & $2_2^{+2}3^{+2}$ & $1$\\\hline
\multirow{2}{*}{$20$} & \multirow{2}{*}{$M_{10}$} & \multirow{2}{*}{$720$} & No & $1$ & $1$ & \multirow{2}{*}{$1$} & \multirow{2}{*}{$2_3^{+1}4_7^{+1}3^{+1}5^{+1}$} & $2_5^{-1} 4_1^{+1} 3^{+2}5^{+1}$ & \multirow{2}{*}{$1$}\\\cline{4-6}\cline{9-9}
 &  &  & No & $1$ & $1$ &  &  & $2_5^{-1} 4_1^{+1} 3^{+2}5^{+1}$ & \\\hline
$20$ & $L_{2}(11)$ & $660$ & Yes & $3$ & $3$ & $3$ & $11^{+2}$ & $3^{-1}11^{+2}$ & $1$\\\hline
$20$ & $A_{3,5}$ & $360$ & Yes & $6$ & $6$ & $6$ & $3^{-2}5^{-2}$ & $3^{-1}5^{-2}$ & $1$\\\hline
$19$ & $3^{1+4}:2.2$ & $972$ & Yes & $2$ & $2$ & $6$ & $2_7^{+1}3^{-4}$ & $2_1^{+1}3^{-3}$ & $1$\\\hline
\multirow{2}{*}{$19$} & \multirow{2}{*}{$A_{6}$} & \multirow{2}{*}{$360$} & No & $1$ & $1$ & \multirow{2}{*}{$2$} & \multirow{2}{*}{$4_3^{-1}3^{+2}5^{+1}$} & $4_5^{-1}3^{-3}5^{+1}$ & \multirow{2}{*}{$1$}\\\cline{4-6}\cline{9-9}
 &  &  & Yes & $2$ & $2$ &  &  & $4_5^{-1}3^{+1}5^{+1}$ & \\\hline
$19$ & $L_{2}(7)$ & $168$ & Yes & $1$, $2$ & $1$ & $2$ & $4_7^{+1}7^{+2}$ & $4_1^{+1}3^{-1}7^{+2}$ & $1$\\\hline
\multirow{2}{*}{$19$} & \multirow{2}{*}{$S_{5}$} & \multirow{2}{*}{$120$} & Yes & $1$ & $1$ & \multirow{2}{*}{$6$} & \multirow{2}{*}{$4_5^{-1}3^{-1}5^{-2}$} & $4_3^{-1}3^{-2}5^{-2}$ & \multirow{2}{*}{$1$}\\\cline{4-6}\cline{9-9}
 &  &  & Yes & $2$ & $2$ &  &  & $4_3^{-1}5^{-2}$ & \\\hline
$19$ & $M_{9}$ & $72$ & No & $1$, $3$ & $1$ & $3$ & $2_5^{+3}3^{+1}9^{+1}$ & $2_3^{+3} 3^{+2} 9^{-1}$ & $1$\\\hline
$19$ & $N_{72}$ & $72$ & Yes & $2$ & $2$ & $6$ & $4_7^{+1}3^{+2}9^{+1}$ & $4_1^{+1}3^{+1}9^{-1}$ & $1$\\\hline
$19$ & $T_{48}$ & $48$ & Yes & $1$ & $1$ & $1$ & $2_1^{+1}8_{\mathrm{II}}^{-2}3^{+1}$ & $2_7^{+1}8_{\mathrm{II}}^{-2}3^{+2}$ & $1$\\\hline
$18$ & $3^{1+4}:2$ & $486$ & Yes & $2$, $4$, $6$, $12$ & $2$ & $12$ & $3^{-5}$ & $3^{+4}$ & $1$\\\hline
\multirow{2}{*}{$18$} & \multirow{2}{*}{$A_{4,3}$} & \multirow{2}{*}{$72$} & No &  & $1$ & \multirow{2}{*}{$6$} & \multirow{2}{*}{$4_{\mathrm{II}}^{-2}3^{+3}$} & $4_{\mathrm{II}}^{-2}3^{+4}$ & \multirow{2}{*}{$1$}\\\cline{4-6}\cline{9-9}
 &  &  & Yes &  & $2$ &  &  & $4_{\mathrm{II}}^{-2}3^{-2}$ & \\\hline
\multirow{2}{*}{$18$} & \multirow{2}{*}{$A_{5}$} & \multirow{2}{*}{$60$} & Yes &  & $1$ & \multirow{2}{*}{$6$} & \multirow{2}{*}{$2_{\mathrm{II}}^{-2}3^{-1}5^{-2}$} & $2_{\mathrm{II}}^{-2}3^{-2}5^{-2}$ & \multirow{2}{*}{$1$}\\\cline{4-6}\cline{9-9}
 &  &  & Yes &  & $2$ &  &  & $2_{\mathrm{II}}^{-2}5^{-2}$ & \\\hline
\multirow{2}{*}{$18$} & \multirow{2}{*}{$3^{2}.4$} & \multirow{2}{*}{$36$} & Yes &  & $1$ & \multirow{2}{*}{$6$} & \multirow{2}{*}{$2_6^{+2}3^{+2}9^{+1}$} & $2_2^{+2}3^{-3}9^{-1}$ & \multirow{2}{*}{$1$}\\\cline{4-6}\cline{9-9}
 &  &  & No &  & $2$ &  &  & $2_2^{+2}3^{+1}9^{-1}$ & \\\hline
\multirow{2}{*}{$18$} & \multirow{2}{*}{$S_{3,3}$} & \multirow{2}{*}{$36$} & Yes &  & $2$ & \multirow{2}{*}{$6$} & \multirow{2}{*}{$2_{\mathrm{II}}^{-2}3^{-3}9^{+1}$} & \multirow{2}{*}{$2_{\mathrm{II}}^{-2}3^{+2}9^{-1}$} & \multirow{2}{*}{$3$}\\\cline{4-6}
 &  &  & Yes &  & $2$ &  &  &  & \\\hline
$18$ & $F_{21}$ & $21$ & Yes & $1$, $2$, $6$ & $1$ & $6$ & $7^{-3}$ & $7^{+3}3^{-1}$ & $1$\\\hline
$18$ & $\mathrm{Hol}(5)$ & $20$ & Yes &  & $1$ & $6$ & $2_6^{+2}5^{+3}$ & $2_2^{+2}3^{-1}5^{+3}$ & $1$\\\hline
$18$ & $\mathrm{QD}_{16}$ & $16$ & Yes & $1$, $2$ & $1$ & $2$ & $2_1^{+1}4_1^{+1}8_{\mathrm{II}}^{+2}$ & $2_7^{+1}4_7^{+1}8_{\mathrm{II}}^{+2}3^{-1}$ & $1$\\\hline
\multirow{2}{*}{$17$} & \multirow{2}{*}{$S_{4}$} & \multirow{2}{*}{$24$} & Yes &  & $1$ & \multirow{2}{*}{$6$} & \multirow{2}{*}{$4_5^{+3}3^{+2}$} & $4_3^{+3}3^{-3}$ & \multirow{2}{*}{$1$}\\\cline{4-6}\cline{9-9}
 &  &  & Yes &  & $2$ &  &  & $4_3^{+3}3^{+1}$ & \\\hline
$17$ & $Q_{8}$ & $8$ & Yes &  & $1$ & $3$,$4$ & $2_5^{+3}8_{\mathrm{II}}^{-2}$ & $2_3^{+3}8_{\mathrm{II}}^{-2}3^{-1}$ & $1$\\\hline
\multirow{2}{*}{$16$} & \multirow{2}{*}{$A_{3,3}$} & \multirow{2}{*}{$18$} & Yes &  & $1$ & \multirow{2}{*}{$12$} & \multirow{2}{*}{$3^{+4}9^{+1}$} & $3^{-5}9^{-1}$ & \multirow{2}{*}{$3$}\\\cline{4-6}\cline{9-9}
 &  &  & No &  & $2$ &  &  & $3^{+3}9^{-1}$ & \\\hline
\multirow{2}{*}{$16$} & \multirow{2}{*}{$D_{12}$} & \multirow{2}{*}{$12$} & Yes &  & $1$ & \multirow{2}{*}{$12$} & \multirow{2}{*}{$2_\mathrm{II}^{+4}3^{+4}$} & $2_\mathrm{II}^{+4}3^{-5}$ & \multirow{2}{*}{$1$}\\\cline{4-6}\cline{9-9}
 &  &  & Yes &  & $2$ &  &  & $2_\mathrm{II}^{+4}3^{+3}$ & \\\hline
\multirow{2}{*}{$16$} & \multirow{2}{*}{$A_{4}$} & \multirow{2}{*}{$12$} & Yes &  & $1$ & \multirow{2}{*}{$6$} & \multirow{2}{*}{$2_\mathrm{II}^{-2}4_\mathrm{II}^{-2}3^{+2}$} & $2_\mathrm{II}^{-2}4_\mathrm{II}^{-2}3^{-3}$ & \multirow{2}{*}{$1$}\\\cline{4-6}\cline{9-9}
 &  &  & Yes &  & $2$ &  &  & $2_\mathrm{II}^{-2}4_\mathrm{II}^{-2}3^{+1}$ & \\\hline
$16$ & $D_{10}$ & $10$ & Yes &  & $1$ & $12$ & $5^{+4}$ & $3^{-1}5^{+4}$ & $1$\\\hline
$15$ & $D_{8}$ & $8$ & Yes &  & $1$ & $4$,$6$ & $4_7^{+5}$ & $4_1^{+5}3^{-1}$ & $1$\\\hline
$14$ & $4$ & $4$ & Yes &  & $1$ & $8$,$12$ & $2_6^{+2}4_\mathrm{II}^{+4}$ & $2_2^{+2}4_\mathrm{II}^{+4}3^{-1}$ & $1$\\\hline
\multirow{2}{*}{$14$} & \multirow{2}{*}{$S_3$} & \multirow{2}{*}{$6$} & Yes &  & $1$ & \multirow{2}{*}{$24$} & \multirow{2}{*}{$2_{\mathrm{II}}^{-2}3^{-5}$} & $2_{\mathrm{II}}^{-2}3^{-6}$ & \multirow{2}{*}{$1$}\\\cline{4-6}\cline{9-9}
 &  &  & Yes & $2$, $4$, $6$, $8$, $12$, $24$ & $2$ &  &  & $2_{\mathrm{II}}^{-2}3^{+4}$ & \\\hline
$12$ & $2^{2}$ & $4$ & Yes & & $1$ & $12$ & $2_\mathrm{II}^{-6}4_\mathrm{II}^{-2}$ & $2_\mathrm{II}^{-6}4_\mathrm{II}^{-2}3^{-1}$ & $40$\\\hline
\multirow{2}{*}{$12$} & \multirow{2}{*}{$3$} & \multirow{2}{*}{$3$} & Yes &  & $1$ & \multirow{2}{*}{$16$,$24$} & \multirow{2}{*}{$3^{+6}$} & $3^{-7}$ & \multirow{2}{*}{$1$}\\\cline{4-6}\cline{9-9}
 &  &  & Yes &  & $2$ &  &  & $3^{+5}$ & \\\hline
$8$ & $2$ & $2$ & Yes & $1$, $2$, $3$, $4$, $6$, $8$, $12$, $24$ & $1$ & $16$,$24$ & $2_\mathrm{II}^{+8}$ & $2_\mathrm{II}^{+8}3^{-1}$ & $1$\\\hline
$0$ & $1$ & $1$ & Yes & $1$, $2$, $3$, $4$, $6$, $8$, $12$, $16$, $24$, $32$, $48$ & $1$ & $32$,$48$ & $1$ & $3^{-1}$ & $1$\\\hline
\caption{Known Indices and Other Information}
\label{table: main}
\end{longtable}
}

\begin{rmk}\label{rem:table}
    We explain some settings of Table \ref{table: main}.
    \begin{enumerate}
        \item For fixed $G$, multi-rows correspond to different moduli(of smooth cubic fourfolds with $\Aut^s\cong G$). There are $47$ families in total, including non-isomorphic smooth cubic fourfolds with the same maximal automorphism group. Those moduli are given in \cite{KOIKE202512}.
        \item The columns ``generic indices'' and ``YYZ bounds'' give the bounds that all possible indices must be divisible by the ``generic indices'' and divide one of the ``YYZ bounds''.
        \item For blocks in the columns ``all possible indices'' we leave blank, we only know about generic indices and YYZ bounds. 
        \item The number $i_S$ is the index of the image of $O(S)\to O(q_S)$ in $O(q_S)$. $q_S$'s and $i_S$'s are from \cite[Table 1]{hohn2016290}. In fact we do not really use $i_S$ since we can always directly find the global non-symplectic automorphism in the families.
        \item The column ``liftable or not'' shows the liftability of $G$ in $\PGL(6,\CC)$, which can also be found from \cite{KOIKE202512}. $G\subset \PGL(6,\CC)$ is said to be liftable if there is a subgroup of $\GL(6,\CC)$ on which the restriction of projection $\GL(6,\CC)\to\PGL(6,\CC)$ is an isomorphism to $G$.
        \item If $r(S)=20$, the moduli spaces are discrete points, so the possible indices and generic indices coincide. 
    \end{enumerate}
\end{rmk}

As a combination of our results, we could give the proof of Theorem \ref{thm:main}:

\begin{proof}[Proof of Theorem \ref{thm:main}] Known indices are deduced via various ways. Generally one can apply Proposition \ref{prop:2k3l}, Corollary \ref{cor:reduce} and YYZ bounds.
    \begin{enumerate}
        \item The cases where $G$ is maximal are already classified in \cite[Theorem 1.8]{laza2022automorphisms}.
        \item The cases $\rank(S)=19$ are classified in Theorems \ref{thm:19_class} and explained in \S \ref{sect:rk19}.
        \item The cases where $7\mid\ord(G)$ are already classified in \cite[Theorem 1.2]{he2025cubicfourfoldsorder7automorphism}.
        \item For $\QD_{16}$, the extra non-symplectic automorphism can be written as $\frac{1}{2}(1,0,0,0,0,0)$, which does not act on the whole family. See \cite[Theorem 1.2(7)(c)]{laza2022automorphisms}.
        \item For $S_3$, see $X_3^{\prime}$ in \cite[6.1]{yang2024automorphism}. By calculating the character of $\Aut^s(X_3^{\prime})\cong S_3$, one can check it belongs to the second family with non-symplectic index $2$.
        \item For $3^{1+4}:2$, $(12)$ is non-symplectic acting on the whole family. See \cite[6.1(i)]{KOIKE202512}. Index $12$ can be realized by $X_2^{\prime}$ in \cite[Example 6.1]{yang2024automorphism}.
        \item The case $G$ is trivial is explained in Theorem \ref{prop:sym_trivial}.
    \end{enumerate}
\end{proof}

\bibliography{reference}
\Addresses

\end{document}